\def\ifdec{\ifx34}   
\newcommand{\pref}[1]{(\ref{#1})}
\newtcolorbox{mybox}[1][]{enhanced jigsaw,breakable,pad at break=1mm,
  oversize,left=10mm, interior style={color=yellow!20},
  colframe=blue!40,nobeforeafter=,#1}
\newtheorem{theorem}{Theorem}
\newtheorem{lemma}{Lemma}[section]
\newtheorem{proposition}[lemma]{Proposition}
\newtheorem{remark}[theorem]{Remark}
\newcommand{\N}{\mathbb{N}}
\newcommand{\R}{\mathbb{R}}
\newcommand{\C}{\mathbb{C}}
\renewcommand{\P}{{\bf P}}
\newcommand{\E}{{\bf E}}
\newcommand{\X}{\mathcal{X}}
\newcommand{\F}{\mathcal{F}}
\newcommand{\one}{\mathbb{1}}
\def\half{{1 \over 2}}
\def\and{\ \  {\rm and} \ \ }
\renewcommand{\implies}{\ {\Rightarrow} \ }
\newcommand{\tvnorm}[1]{\norm{#1}_{\operatorname{TV}}}
\newcommand{\s}{\mathcal{S}}
\newcommand{\vv}{$V$}
\newcommand{\lv}{L^{\infty}_{V}}
\newcommand{\lvz}{L^{\infty}_{V,0}}
\newcommand{\lppi}{L^{p}(\pi)}
\newcommand{\lpp}{L^{2}(\pi)}
\def\condref#1{(\ref{#1})}
\def\qref#1{Q$\,$\ref{sec-open}.\ref{#1}}
\def\ifextralemmas{\ifx34}
\def\checkpage#1{
\dimen5 = \pagetotal
\ifdim \dimen5 < \pagegoal
\advance \dimen5 by #1
\ifdim \dimen5 > \pagegoal
\newpage
\fi
\fi
}
\begin{document}

\centerline{\LARGE Equivalences of Geometric Ergodicity of Markov Chains}

\bigskip
\centerline{by \ (in alphabetical order)}

\bigskip
\centerline{\large Marco A.\ Gallegos-Herrada, \ David Ledvinka,
	\ and \ Je{f}frey S.\ Rosenthal\footnote
{Corresponding author: jeff@math.toronto.edu
\quad ORCID: 0000-0002-5118-6808}
}

\medskip
\centerline{\sl Departments of Statistics and Mathematics,
University of Toronto, Canada}

\bigskip
\centerline{(Version of: \today.)}

\allowdisplaybreaks

\begin{abstract}\noindent
This paper gathers together different conditions which are all equivalent
to geometric ergodicity of time-homogeneous Markov chains on general state
spaces.  A total of 34 different conditions are presented (27 for general
chains plus 7 for reversible chains), some old and some new, in terms
of such notions as convergence bounds, drift conditions, spectral
properties, etc., with different assumptions about the distance metric
used, finiteness of function moments, initial distribution, uniformity of
bounds, and more.  Proofs of the connections between the different
conditions are provided, somewhat self-contained but using some results from
the literature where appropriate.
\end{abstract}

\section{Introduction}\label{sec-introduction}

The increasing importance of Markov chain Monte Carlo (MCMC) algorithms
(see e.g.~\cite{handbook} and the many references therein) has focused
attention on the rate of convergence of (time-homogeneous) Markov chains
to their stationary
distribution.  While it is most useful to have
explicit quantitative bounds on the
distance to stationarity (see e.g.~\cite{Rosenthal2002,JonesHobert}
and the references
therein), qualitative convergence bounds are often more feasible to obtain.
The most commonly-used qualitative convergence property
is geometric ergodicity, i.e.\ exponentially fast convergence to stationarity,
which has been widely studied (e.g.\
\cite{tierney1994markov,MeynTweedie,probsurv}), and indeed has become a
{\it de facto} method of assessing the value of MCMC algorithms.

In addition to fast convergence, geometric ergodicity also
guarantees a Markov chain Central Limit Theorem (CLT),
i.e.\ the convergence of
scaled sums of functional values
to a fixed normal
distribution, for all functionals
with finite
$2+\delta$ moments \cite[Theorem~18.5.3]{ibragimov}
(see also \cite{hobertregen}),
or even just $2^{\rm nd}$ moments assuming reversibility
\cite{hybrid}.
Such CLTs are helpful for understanding the errors which arise from Monte
Carlo estimation (see e.g.\ \cite{tierney1994markov, RTmet,
jones2004markov}).
However, geometric ergodicity and CLTs do not hold for all
Markov chains nor all MCMC algorithms
(see e.g.\ \cite{negclt} and \cite[Theorem 22]{probsurv}).

For certain types of MCMC algorithms, geometric ergodicity is fairly well
understood.  For example,
it is known that an Independence Sampler is geometrically ergodic if and
only if its proposal density is bounded below by a constant multiple of
the target density \cite{Liu1996},
and that the popular Random-Walk Metropolis algorithm is geometrically
ergodic essentially if and only if its target distribution has
exponentially light tails \cite{mengersen,RTmet}.
However, for many other complicated Markov chains and MCMC algorithms,
geometric ergodicity is not clear.

One promising way of establishing geometric ergodicity
is to show that some other properties of Markov
chains imply it, or are even equivalent to it.
This has been shown, by
\cite{tierney1994markov, MeynTweedie, hybrid, robertsTweedie:2001} and
others, for properties such as
drift conditions, spectral bounds, and more.  However, such
relationships are scattered throughout the literature, are not always
stated in full generality, and are often presented as just one-way
implications.  
In the current work, we present a total of 34 different
conditions which are
equivalent to geometric ergodicity
for Markov chains on general state spaces
(27 for general chains plus 7 just for reversible chains;
some previously known and some new).
We then provide proofs of all of the equivalences (somewhat
self-contained, though using known results where needed);
see Figure~\ref{fig:statements}.

To illustrate the flavour of the various equivalences, consider the
following:

\def\textindent#1{\indent\llap{#1\enspace}\ignorespaces}
\def\hang{\hangindent\parindent}
\def\myitem{\par\hang\textindent}
\def\point{\myitem{$\bullet$}}

\point
The usual definitions of geometric ergodicity state that the
Markov chain's {\it distance to stationarity}
after $n$ iterations is bounded by a
constant times $\rho^n$ for some $\rho<1$.
But what ``distance'' should be used: total variation, or $V$-norm,
or $L^2(\pi)$?  And, how does the ``constant'' depend on the starting
state $X_0=x$?  Must those constants have finite expected value
with respect to $\pi$?  What about finite $j^{\rm th}$ moments?

\point If the initial state $X_0$ is itself chosen from a non-degenerate
{\it initial distribution} probability measure $\mu$, then
will the convergence to stationarity still be geometric,
at least if $\mu$ is, say, in $L^p(\pi)$?

\point Geometric ergodicity is well-known to be implied by {\it drift
conditions} of the form $PV(x) \le \lambda \, V(x) + b \, \one_{S}(x)$
for some function $V:\X\to[1,\infty]$
and $\lambda<1$ and $b<\infty$ and small set $S$.
But are such drift conditions actually {\it equivalent} to
geometric ergodicity?  And, can the drift function $V$ be taken to have
finite stationary mean? finite $j^{\rm th}$ moment?

\point Geometric ergodicity is also related to
the Markov operator $P$ having a {\it spectral gap}.
But as an operator on what space:
$L^\infty_V$?
for what function $V$?
having which finite moments?
And should the ``gap'' be identified by removing the eigenvalue~1 directly,
or by subtracting off $\Pi$, or by restricting
to the zero-mean space $L^\infty_{V,0}$?

\point Geometric ergodicity is implied by the Markov operator
{\it norm} being less than~1.
But for which operator: $P$, or $P^m$ for some $m\in\N$?
Regarded as an operator on $L^\infty_V$ or $L^\infty_{V,0}$?
For what choice of $V$?  Having which finite moments?

\point If the Markov chain is assumed to be {\it reversible}, so that
the operator $P$ is self-adjoint on $L^2(\pi)$, then in
which of
the above conditions can the operator norm be taken to be $L^2(\pi)$?

\noindent
We shall see that the answer to these questions is, essentially, ``all of
the above''.  That is, we shall state many different conditions, which
cover essentially all of the above possibilities, and shall prove
that they are all equivalent.  In our desire to be thorough, we might have
gone a bit overboard listing so many different conditions,
including some which are just minor variations of each other.
However, we believe that additional equivalent conditions can only help:
the equivalences with weaker assumptions are easier to establish,
while the equivalences with stronger assumptions are most useful for
drawing conclusions or analysing further.
We know from bitter experience that it can be very frustrating to
discover a statement about geometric ergodicity which is almost, but not
quite, exactly what we can verify, or exactly what
is needed to finish a particular proof.
This has led us to adopt a ``the more the merrier'' attitude
regarding different but similar conditions.
The reader can, of course, choose to ignore all conditions
which are not germaine to their work.

As mentioned, many of the equivalences presented herein were already
known; see the Remark after Theorem~\ref{thm-equiv} below.
Thus, this paper falls somewhere in between an expository/review paper and
a original research paper, but we hope it is helpful nonetheless.

Basic definitions necessary to understand the conditions, such as total
variance distance, $\lv$ norms, $L^p(\pi)$ spaces, reversibility, etc.,
are presented in Section~\ref{sec-definitions}.
Then, in Section~\ref{sec-statements}, all of the equivalent conditions are
introduced (Theorem~\ref{thm-equiv}).
Sections~\ref{sec-lemmas} through~\ref{sec-reversible}
are then devoted to proving all of the equivalences;
see Figure~\ref{fig:statements} for a visual guide showing
which implications are proved by which of our results.
Our proofs are somewhat
self-contained, but we do use known results in the
literature (especially \cite{MeynTweedie}) where needed.
Finally, we close in Section~\ref{sec-open} with some future
directions and open problems (\qref{q-irred} through \qref{q-simpleerg}).

\section{Definitions and Background}
\label{sec-definitions}

Throughout this paper, $\Phi = \{X_n\}_{n=0}^\infty$ is a discrete-time,
time-homogeneous {\it Markov chain} on a general state space $\X$ equipped
with a $\sigma$-algebra $\F$.  And, $P$ is the corresponding Markov kernel,
so that $P(x,A) = \P[X_n \in A \, | \, X_{n-1}=x]$ for all $x\in\X$ and
$A\in\F$ and $n\in\N$.
The {\it kernel} $P$ acts to the left on (possibly signed) measures,
and to the right on functions, by:
\begin{align*}
    (\mu P) (A) = \int P(x,A) \, \mu(dx),
&\quad \quad (Pf)(x) = \int f(y) \, P(x,dy) .
\end{align*}
The {\it higher-order transitions} are then defined inductively by:
$$
P^{n}(x,A) \ = \ \int_{\X} P(x,dy) \, P^{n-1}(y,A),
\qquad x\in\X, \ A\in\F, \ n\in\N.
$$

We shall assume throughout $P$ has a {\it stationary distribution}, i.e.\
a probability distribution $\pi$ on $(\X,\F)$ which is preserved by $P$
in the sense that $\pi P = \pi$.
We define $\Pi := \one_{\X}\otimes \pi$ by
\[
\Pi(x,A) \ := \
(\one_{\X}\otimes \pi)(x,A) \ = \  \pi(A),
\qquad x\in\X,\ A\in\F ,
\]
so that
$$
    (\mu \, \Pi)(A) \ := \ \big( \mu( \one_{\X}\otimes\pi) \big) (A) \ = \
    \mu(\X) \, \pi(A) \, .
$$
If $\mu$ is a probability measure, then $(\mu \, \Pi)(A) = \pi(A)$,
and $\mu(P^n - \Pi) = \mu P^n - \pi$.
Also, by stationarity of $\pi$, we have
$(P-\Pi)^{n}=P^n - \Pi$
for each $n\in\N$.

We shall assume that our Markov chain is {\it $\phi$-irreducible},
i.e.\ there exists a non-zero $\sigma$-finite measure $\phi$ on $(\X,\F)$
such that for all $x\in \X$ and $A\subseteq \X$ with $\phi(A) > 0 $,
there is $n\in \N$ with $P^{n}(x,A)>0$.
We shall also assume that it is {\it aperiodic}, i.e.\ there
do not exist $d\geq 2$ and disjoint $\X_{1},\ldots,\X_{d}\subseteq
\X$ of positive $\pi$ measure,
such that $P(x,\X_{i+1})=1$ for all $x\in \X_{i}$ ($i=1,\ldots,d-1$) and
$P(x,\X_{1})=1$ for all $x\in \X_{d}$.
It is well-known (e.g.~\cite{MeynTweedie,probsurv}) that these conditions
guarantee that $P^n(x,A) \to \pi(A)$ as $n\to\infty$
(see also \qref{q-irred} and \qref{q-varbound} below).
Geometric ergodicity then corresponds to the property, which may
or may not hold, that this convergence occurs exponentially quickly.

We shall also assume that the state space $(\X,\F)$ is {\it countably
generated}, i.e.\ that there exists $A_1,A_2,\ldots\in\F$ such that
$\F=\sigma(A_1,A_2,\ldots)$, i.e.\ $\F$ is the smallest $\sigma$-algebra
containing all of the $A_i$.
This technical property ensures the existence of small sets
\cite{Doeblin1940,JainJamison, Orey}
and the measurability of certain functions \cite[Appendix]{hybrid}
(see also \qref{q-countgen} below).

A subset $S\in\F$ is called {\it small} if $\pi(S)>0$ and
there is $m>0$ and a non-zero
measure $\nu$ on $(\X,\F)$ such that
$P^{m}(x,A) \ \geq \ \nu(A)$ for all
$x\in S$ and $A\in\F$, i.e.\ if all of the $m$-step transition
probabilities from within $S$ all have some ``overlap''.
This property is very useful for coupling constructions
and for ensuring convergence to stationarity
(see e.g.\ \cite{MeynTweedie, probsurv}).

The {\it total variation distance} between two probability measures
$\mu_{1}$ and $\mu_{2}$ is defined by:
    \[
    \tvnorm{\mu_{1}-\mu_{2}} \ = \
\sup\limits_{A\in\F}|\mu_{1}(A)-\mu_{2}(A)|
\ \equiv \ \frac{1}{2} \, \sup\limits_{|f|\leq 1 }
\Big| \int f d\mu_1 - \int fd\mu_2 \Big|
    \]
(see e.g.\ \cite[Proposition 3(b)]{probsurv}).
Given a positive function $V:\X \to \R$, we define
\cite[p.~390]{MeynTweedie} the {\it $V$-norm}
$|f|_{V}=\sup\limits_{x\in\X}\frac{|f(x)|}{V(x)}$.  We let $\lv$ be the
vector space of all functions $f:\X\to\R$ such that $|f|_{V} < \infty$,
and let $\lvz = \{ f\in \lv : \pi(f) = 0 \}$.
Then, we define the {\it $V$-norm} of a Markov kernel $P$ as
\begin{align*}
\norm{P}_{\lv} = \sup\limits_{\substack{f\in\lv \\ |f|_V = 1}} |Pf|_{V}; 
\quad \quad \norm{P}_{\lvz} =\sup\limits_{\substack{f\in\lvz\\ |f|_V=1}} |Pf|_{V}.    
\end{align*}
For a (possibly signed) measure $\mu$,
we define $\norm{\mu}_{L^p(\pi)}$ for $1 \le p < \infty$ by
\[
    \norm{\mu}_{L^p(\pi)}^{p}
\ = \
\begin{cases}
    \mu^{+}(\X) + \mu^{-}(\X), &\mathrm{if}\ p=1\\
    {\displaystyle \int_{\X}
\Big| \dfrac{d\mu}{d\pi} \Big|^p d\pi},
			&\mathrm{if}\ \mu \ll \pi\\
    \infty, & \mathrm{otherwise}.
    \end{cases}
\]
(If $p=1$ and $\mu \ll \pi$, then the two definitions coincide.)
We let $L^{p}(\pi)$
be the collection of all signed measures $\mu$ on $(\X,\F)$
with $\norm{\mu}_{L^p(\pi)} < \infty$,
and define the $L^p(\pi)$-norm of a
transition kernel $P$ acting on the set $L^{p}(\pi)$ by:
$$
\norm{P}_{L^{p}(\pi)}
\ = \ \sup\limits_{\norm{\mu}_{L^p(\pi)}=1} \norm{\mu P(\cdot)}_{L^p(\pi)}.
$$
(Note in particular that the $L^p(\pi)$ are collections of signed
{\it measures}, while $L_V^\infty$ and $L_{V,0}^\infty$ are collections
of {\it functions}.)

The transition kernel $P$ is {\it reversible} with respect to $\pi$ if
$
\pi(dx) \, P(x,dy) \ = \ \pi(dy) \, P(y,dx)
$
for all $x,y\in\X$.
This is equivalent to $P$ being a self-adjoint operator
on the Hilbert space $\lpp$, with inner product given by 
\[
\langle \mu, \nu \rangle
\ = \ \int_{\X} \dfrac{d\mu}{d\pi} \, \dfrac{d\nu}{d\pi} \, d\pi.
\]
In particular, $\langle \mu, \pi \rangle = \int_\X {d\mu \over d\pi} \, 1
\, d\pi = \mu(\X)$.
We also let $\pi^{\bot} \coloneqq \{ \mu \in \lpp : \mu(\X) = 0 \}$
be the set of signed measures in $\lpp$ which are ``perpendicular''
to $\pi$, i.e.\ for which $\langle \mu, \pi \rangle \equiv \mu(\X) = 0$.
Our conditions \condref{eq-l2ge-Cmu} through \condref{eq-srl2-p-0} are
only proven to be equivalent for reversible chains (though see
\qref{q-reversible} below).

\def\vecsp{{\cal V}}

Finally, given an operator $P$ on a Banach space (i.e.\ a complete normed
vector space) $\vecsp$,
e.g.\ $\vecsp=\lv$ or $\lpp$, the {\it spectrum} of $P$, denoted by
$\s(P)$ or $\s_\vecsp(P)$,
is the set of all complex numbers $\lambda$ such that $\lambda I -
P$ is not invertible
(see e.g.\ \cite[p.~253]{rudin:1991functional}).
And, the {\it spectral radius} of $P$ is the number
$r(P) = r_\vecsp(P) = \sup\limits_{\lambda\in \s_\vecsp(P)} |\lambda|$.

\section{Main Result: Statement of Equivalences}\label{sec-statements}

We now provide a list of 27 conditions which are always equivalent to
geometric ergodicity of Markov chains, and an additional 7 (for 34 total)
which are also equivalent for reversible chains.  Some of the conditions
are very similar to each other, but are included to allow for maximum
flexibility when establishing or using geometric ergodicity in both
theoretical investigations and applications.  For ease of
comprehension, similar conditions are grouped together under common
subheadings.

\begin{theorem}\label{thm-equiv}
    Let $P$ be the transition kernel of a $\phi$-irreducible, aperiodic Markov chain $\Phi = \{X_n\}$ with stationary probability
distribution $\pi$ on a countably generated measurable state space $(\X,\F)$.
Then the following are equivalent (and all correspond to being
``geometrically ergodic''):

\def\condhead#1{\medskip\checkpage{2cm}$\big.$\hskip-1cm\underbar{\rm #1:}}

\medskip\checkpage{5cm}\noindent\underbar{\rm Geometric Convergence in TV:}\par

    \begin{enumerate}[label=\textit{\roman*}), ref=\textit{\roman*}]

        \item\label{eq-geometric-erg}
       $\Phi$ is geometrically ergodic starting from $\pi$-a.e.\ $x \in \X$
            with constant geometric rate.
This means there is fixed $\rho<1$ such that for
            $\pi$-a.e.\ $x \in \X$ there is $C_x <\infty$ with
            \begin{align*}
                \tvnorm{P^n(x,\cdot) - \pi(\cdot)} \ \leq \ C_x \, \rho^n
\qquad \hbox{\rm for all} \ n\in\N .
            \end{align*}

        \item\label{eq-weak-geometric-erg}
            There exists $A\in\F$ with $\pi(A) > 0$ such that $\Phi$ is 
geometrically ergodic starting from each $x\in A$.
	This means for each $x \in A$, there are $\rho_x <1$
            and $C_x < \infty$ with
            \begin{align*}
                \tvnorm{P^n(x,\cdot) - \pi(\cdot)} \ \leq \ C_x \, \rho_x^n
\qquad \hbox{\rm for all} \ n\in\N .
            \end{align*}

        \item\label{eq-gefslp}
	    There exists $p \in (1,\infty)$ such that $\Phi$ is
geometrically ergodic starting from all probability measures in
$L^p(\pi)$.  This means there is some $p\in (1,\infty)$ such that
for each probability measure $\mu \in L^p(\pi)$
there are constants $\rho_{\mu} < 1$ and $C_{\mu} < \infty$ with
            \begin{align*}
	\tvnorm{\mu P^n(\cdot) - \pi(\cdot)} \ \leq \ C_{\mu} \, \rho_{\mu}^n
\qquad \hbox{\rm for all} \ n\in\N .
            \end{align*}

        \item\label{eq-gefalp}
            For all $p \in (1,\infty)$, $\Phi$ is geometrically ergodic starting from
            all probability measures in $L^p(\pi)$ with geometric
            rate depending only on $p$. This means for each
$p \in (1,\infty)$, there is $\rho_p < 1$ such that
for each probability measure $\mu \in L^p(\pi)$
there is $C_{p,\mu} < \infty$ with
            \begin{align*}
	\tvnorm{\mu P^n(\cdot) - \pi(\cdot)} \ \leq \ C_{p,\mu} \, \rho_p^n
\qquad \hbox{\rm for all} \ n\in\N .
            \end{align*}

        \item\label{eq-ssgesup}
            There exists a small set $S \in \F$ such that $\Phi$ is
            geometrically ergodic uniformly over starting states
            within $S$. This means there are constants
            $\rho_S < 1$ and $C_S < \infty$ with
            \begin{align*}
	    \sup_{x\in S} \
	\tvnorm{P^n(x,\cdot) - \pi(\cdot)} \ \leq \ C_{S} \, \rho_{S}^n
\qquad \hbox{\rm for all} \ n\in\N .
            \end{align*}

        \item\label{eq-ssge}
            There exists a small set $S \in \F$ such that $\Phi$ is
            geometrically ergodic starting from the stationary distribution
            restricted to $S$. This means there are constants
            $\rho_S < 1$ and $C_S < \infty$ with
            \begin{align*}
	\tvnorm{\pi_{S} P^n(\cdot) - \pi(\cdot)} \ \leq \ C_{S} \, \rho_{S}^n
\qquad \hbox{\rm for all} \ n\in\N ,
            \end{align*}
            where $\pi_{S}$ is the probability measure defined by
                $\pi_{S}(A) \, = \, \pi(S \cap A) \bigm/ \pi(S)$ for $A\in\F$.

\end{enumerate}

\condhead{Geometric Return Time}

    \begin{enumerate}[label=\roman*), ref=\textit{\roman*}, resume]

        \item\label{eq-tau-c}
	There exists a small set $S\in\F$ and constant $\kappa > 1$ such that
            \begin{align*}
	\sup\limits_{x\in S} \, \E_{x}[\kappa^{\tau_{S}}] \ < \ \infty
            \end{align*}
where $\tau_S$ is the first return time to $S$, and $\E_x$ is expected
value conditional on $X_0=x$.

\condhead{$V$-Function Drift Condition}

        \item\label{eq-dc-nomom}
	    There exists a $\pi$-a.e.-finite measurable function $V: \X
\to [1,\infty]$, a small set $S \in \F$, and constants $\lambda < 1$
and $b < \infty$ with
            \begin{align*}
                PV(x) \ \leq \ \lambda \, V(x) + b \, \one_{S}(x)
\qquad \hbox{\rm for all} \ x\in\X .
            \end{align*}

        \item\label{eq-dc-allj}
            For all $j\in \N$, there exists a $\pi$-a.e.-finite measurable function $V : \X \to [1,\infty]$, a small set $S \in \F$, and constants
$\lambda<1$ and $b<\infty$ with $\pi(V^{j})<\infty$ and
            \begin{align*}
                PV(x) \ \leq \ \lambda \, V(x) + b \, \one_{S}(x)
\qquad \hbox{\rm for all} \ x\in\X .
            \end{align*}

\condhead{$V$-Uniform Convergence}

        \item\label{eq-vuex-nomom}
            There exists a $\pi$-a.e.-finite measurable function
            $V : \X \to [1,\infty]$ such that $\Phi$ is $V$-uniformly ergodic.
This means there is $\rho<1$ and $C<\infty$ such that
            \begin{align*}
\sup_{|f| \leq V} \, \big| P^n f(x) - \pi(f) \big|
	\ \leq \ C \, V(x) \, \rho^n
\qquad \hbox{\rm for all} \ x\in\X \ {\rm and} \ n\in\N.
            \end{align*}

        \item\label{eq-vuex-allj}
	For all $j\in \N$, there exists a $\pi$-a.e.-finite measurable function
            $V : \X \to [1,\infty]$ with
            $\pi(V^{j}) <\infty$, such that $\Phi$
            is $V$-uniformly ergodic.
This means there is $\rho<1$ and $C<\infty$ such that
            \begin{align*}
\sup_{|f| \leq V} \, \big| P^n f(x) - \pi(f) \big|
	\ \leq \ C \, V(x) \, \rho^n
\qquad \hbox{\rm for all} \ x\in\X \ {\rm and} \ n\in\N.
            \end{align*}

        \item\label{eq-vuemu-nomom}
            There exists a $\pi$-a.e.-finite measurable function
            $V : \X \to [1,\infty]$, and constants $\rho<1$ and $C<\infty$,
such that for each probability measure $\mu$ on $\X$ with $\mu(V)<\infty$,
            \begin{align*}
\sup_{|f| \leq V} \big|\mu P^n(f) - \pi(f)\big|
\ \leq \ C \, \mu(V) \, \rho^n
\qquad \hbox{\rm for all} \ n\in\N.
            \end{align*}

        \item\label{eq-vuemu-allj}
    For all $j\in\N$, there exists a $\pi$-a.e.-finite measurable function
            $V : \X \to [1,\infty]$ with $\pi(V^j)<\infty$,
and constants $\rho<1$ and $C<\infty$,
such that for each probability measure $\mu$ on $\X$ with $\mu(V)<\infty$,
            \begin{align*}
\sup_{|f| \leq V} \big|\mu P^n(f) - \pi(f)\big|
\ \leq \ C \, \mu(V) \, \rho^n
\qquad \hbox{\rm for all} \ n\in\N.
            \end{align*}

\end{enumerate}

\condhead{Spectral Gap}

    \begin{enumerate}[label=\roman*), ref=\textit{\roman*}, resume]

        \item\label{eq-sg-inf-somej}
There exists $j\in\N$ and
a $\pi$-a.e.-finite measurable function \vv$: \X \to
            [1,\infty] $ with $\pi(V^j)<\infty$,
such that $P$ has
a spectral gap as an operator on $\lv$,
meaning 1 is an eigenvalue of $P$
(which must have multiplicity 1 by Lemma~\ref{lem-P-Linf}),
and there is $\rho<1$ such that
            \[
            \s_{\lv}(P) \setminus \{1\} \ \subseteq \
\{ z\in \mathbb{C} : |z| \leq \rho \}.
            \]

        \item\label{eq-sg-inf-allj}
For all $j\in\N$, there exists
a $\pi$-a.e.-finite measurable function \vv$: \X \to
            [1,\infty] $ with $\pi(V^j)<\infty$,
such that $P$ has
a spectral gap as an operator on $\lv$,
meaning 1 is an eigenvalue of $P$
(which must have multiplicity 1 by Lemma~\ref{lem-P-Linf}),
and there is $\rho<1$ such that
            \[
            \s_{\lv}(P) \setminus \{1\} \ \subseteq \
\{ z\in \mathbb{C} : |z| \leq \rho \}.
            \]

\condhead{Spectral Radius}

        \item\label{eq-srlinf-p-pi-somej}
            There exists $j\in\N$ and
a $\pi$-a.e.-finite measurable function $V:\X \to
            [1,\infty]$ with $\pi(V^j)<\infty$, such that
$P-\Pi$ has spectral radius less than one as an operator
on $L^{\infty}_{V}$, i.e.\
            \[
            r_{L^{\infty}_{V}}(P-\Pi)< 1.
            \]

        \item\label{eq-srlinf-p-pi-allj}
            For all $j\in\N$, there exists a $\pi$-a.e.-finite measurable
function $V : \X \to [1,\infty]$ with $\pi(V^{j})<\infty$, such that
$P-\Pi$ has spectral radius less than one as an operator
on $L^{\infty}_{V}$, i.e.\
            \[
            r_{L^{\infty}_{V}}(P-\Pi)< 1.
            \]

        \item\label{eq-srlinf-p-0-somej}
            There exists $j\in\N$ and
a $\pi$-a.e.-finite 
            measurable function $V : \X \to [1,\infty]$
with $\pi(V^j)<\infty$, such that 
$P$ has spectral radius less than one as an operator
on $L^{\infty}_{V,0}$, i.e.\
            \[
            r_{L^{\infty}_{V,0}}(P)< 1.
            \]

        \item\label{eq-srlinf-p-0-allj}
For all $j\in\N$,
            there exists a $\pi$-a.e.-finite 
            measurable function $V : \X \to [1,\infty]$
with $\pi(V^j)<\infty$, such that 
$P$ has spectral radius less than one as an operator
on $L^{\infty}_{V,0}$, i.e.\
            \[
            r_{L^{\infty}_{V,0}}(P)< 1.
            \]

\condhead{$L^\infty_V$ Operator Norm}

        \item\label{eq-nlinf-p-pi-somej}
            There exists $j,m\in\N$ and a $\pi$-a.e.-finite measurable
            function $V : \X \to [1,\infty]$ with 
            $\pi (V^j)<\infty$, such that
            \begin{align*}
                \norm{P^{m} - \Pi}_{L^{\infty}_{V}} < 1.
            \end{align*}

        \item\label{eq-nlinf-p-pi-allj}
            For all $j\in \N$, there exists $m\in\N$ and
a $\pi$-a.e.-finite measurable
            function $V : \X \to [1,\infty]$ such that $\pi (V^j)<\infty$ and
            \begin{align*}
                \norm{P^{m} - \Pi}_{L^{\infty}_{V}} < 1.
            \end{align*}

        \item\label{eq-voinf-somej}
            There exists $j,m\in\N$ and
a $\pi$-a.e.-finite measurable function $V : \X \to [1,\infty]$
with $\pi(V^j)< \infty$, such that
            \begin{align*}
                \norm{P^{m}}_{L^{\infty}_{V,0}} < 1.
            \end{align*}

        \item\label{eq-voinf-allj}
            For all $j\in \N$, there exists $m\in\N$ and
a $\pi$-a.e.-finite measurable function $V : \X \to [1,\infty]$
with $\pi(V^j)< \infty$, such that
            \begin{align*}
                \norm{P^{m}}_{L^{\infty}_{V,0}} < 1.
            \end{align*}

        \item\label{eq-vinf-somej}
            There exists $j\in\N$ and
a $\pi$-a.e.-finite measurable function
$V : \X \to [1,\infty]$ with $\pi(V^j)<\infty$, and constants $\rho<1$ and
$C<\infty$, such that
            \begin{align*}
                \norm{P^n - \Pi}_{L^{\infty}_{V}} \ \leq \ C \, \rho^n
\qquad \hbox{\rm for all} \ n\in\N.
            \end{align*}

        \item\label{eq-vinf-allj}
            For all $j\in \N$, there exists a $\pi$-a.e.-finite 
            measurable function $V : \X \to [1,\infty]$
with $\pi(V^j)<\infty$, and constants $\rho<1$ and $C<\infty$, such that
            \begin{align*}
                \norm{P^n - \Pi}_{L^{\infty}_{V}} \ \leq \ C \, \rho^n
\qquad \hbox{\rm for all} \ n\in\N.
            \end{align*}

        \item\label{eq-vinf0-somej}
            There exists $j\in\N$ and
a $\pi$-a.e.-finite measurable function
            $V : \X \to [1,\infty]$ with $\pi(V^j)<\infty$,
and constants $\rho<1$ and $C<\infty$, such that
            \begin{align*}
                \norm{P^n}_{L^{\infty}_{V,0}} \ \leq \ C \, \rho^n
\qquad \hbox{\rm for all} \ n\in\N.
            \end{align*}

        \item\label{eq-vinf0-allj}
            For all $j\in \N$, there exists a $\pi$-a.e.-finite 
            measurable function $V : \X \to [1,\infty]$ with $\pi(V^j)<\infty$,
and constants $\rho<1$ and $C<\infty$, such that
            \begin{align*}
                \norm{P^n}_{L^{\infty}_{V,0}} \ \leq \ C \, \rho^n
\qquad \hbox{\rm for all} \ n\in\N.
            \end{align*}

    \end{enumerate}

\condhead{Conditions Assuming Reversibility}

\medskip\noindent Furthermore, if $\Phi$ is reversible, then
the following are also equivalent to the above:

    \begin{enumerate}[label=\roman*), ref=\textit{\roman*}, resume]

        \item\label{eq-l2ge-Cmu} 
            $\Phi$ is $L^2(\pi)$-geometrically ergodic starting from any
            probability measure $\mu$ in $L^2(\pi)$ with uniform convergence
            rate.
This means there is $\rho<1$ such that
for each probability measure $\mu \in L^2(\pi)$,
there is a constant $C_{\mu} < \infty$ such that
            \begin{align*}
\norm{\mu P^n(\cdot) - \pi(\cdot)}_{L^2(\pi)} \ \leq \ C_{\mu} \, \rho^n
\qquad \hbox{\rm for all} \ n\in\N.
            \end{align*}

        \item\label{eq-l2ge-noC}
	There exists $\rho<1$ such that for each probability measure $\mu \in
            L^2(\pi)$,
            \begin{align*}
\norm{\mu P^n(\cdot) - \pi(\cdot)}_{L^2(\pi)} \ \leq \
\norm{\mu - \pi}_{\lpp} \, \rho^n
\qquad \hbox{\rm for all} \ n\in\N.
            \end{align*}

        \item\label{eq-sg-l2}
            $P$ has a spectral gap
as an operator on $L^2(\pi)$, meaning that
1 is an eigenvalue of $P$
(which must have multiplicity 1 by Lemma~\ref{lem-P-Linf}),
and there is $\rho<1$ with
            \[
    \s_{L^2(\pi)}(P) \setminus \{1\} \ \subseteq \ \{z \in \C: |z| \leq \rho\}.
            \]

        \item\label{eq-srl2-p-pi}
$P-\Pi$ has spectral radius less than one as an operator on $L^2(\pi)$, i.e.\
            \[
            r_{L^2(\pi)}(P - \Pi) \ < \ 1.
            \]

        \item\label{eq-nl2-p-pi}
$P-\Pi$ has operator norm less than one as an operator on $L^2(\pi)$, i.e.\
            \begin{align*}
                \norm{P - \Pi}_{L^2(\pi)} \ < \ 1.
            \end{align*}

        \item\label{eq-nl2-p-0}
$P$ has operator norm less than one as an operator on $\pi^\bot$,
i.e.\
            \begin{align*}
                \norm{P}_{\pi^\bot} \ < \ 1.
            \end{align*}
            
        \item\label{eq-srl2-p-0}
$P|_{\pi^\bot}$ has spectral radius
less than one as an operator on $\pi^\bot$, i.e.\
            \begin{align*}
                r_{\pi^\bot}(P) \ < \ 1.
			\end{align*}

    \end{enumerate}

\end{theorem}

\ifx34
\begin{remark}
Since we always have $V(x) \le V(x)^j + 1$, the phrase
``$\pi(V)<\infty$'' is equivalent to the phrase
``$\pi(V^j)<\infty$ for some $j\in\N$'' in conditions
\condref{eq-dc-one}, \condref{eq-sg-inf-one},
\condref{eq-srlinf-p-pi-one}, \condref{eq-srlinf-p-0-one},
\condref{eq-nlinf-p-pi-one}, \condref{eq-voinf-one},
\condref{eq-vinf-one}, and \condref{eq-vinf0-one} above.
\end{remark}
\fi

\medskip\noindent\bf Remark. \rm
A number of the above equivalences are already known, as follows.
The fact that \condref{eq-ssge} implies \condref{eq-geometric-erg} 
was shown in \cite{vere-jones1962}
on countable state spaces, and then in
\cite[Theorem~1]{nummelinTweedie:1978} on general state spaces.
The equivalence of
\condref{eq-ssge},
\condref{eq-tau-c},
and \condref{eq-dc-nomom},
together with the fact that they imply
\condref{eq-geometric-erg},
was presented in \cite[Theorem~15.0.1]{MeynTweedie}.
The equivalence of
\condref{eq-dc-nomom},
\condref{eq-vuex-nomom},
\condref{eq-nlinf-p-pi-somej},
and \condref{eq-vinf0-somej}
was presented in \cite[Theorem~16.0.1]{MeynTweedie}.
The equivalence of the group
\condref{eq-geometric-erg},
\condref{eq-ssge},
\condref{eq-vuex-nomom},
\condref{eq-vuex-allj},
\condref{eq-nlinf-p-pi-allj},
and \condref{eq-voinf-allj}
was presented in \cite[Proposition~1]{hybrid},
and the equivalence (assuming reversibility) of the group
\condref{eq-l2ge-Cmu},
\condref{eq-l2ge-noC},
and \condref{eq-nl2-p-0}
was presented in \cite[Theorem~2]{hybrid},
together with the fact that the first group implies the second.
The reverse implication, that the second group implies the first,
was then shown in \cite{robertsTweedie:2001}.
Discussions related to the spectral gap conditions
\pref{eq-sg-inf-somej} and \pref{eq-sg-inf-allj} and \pref{eq-sg-l2}
appear in \cite{KM1}.
The equivalence of
\pref{eq-sg-inf-somej} and \pref{eq-dc-nomom}
is shown in \cite[Proposition~1.1]{KM2},
and the equivalence of \pref{eq-srl2-p-pi} and \pref{eq-geometric-erg}
for reversible chains
is shown in \cite[Proposition~1.2]{KM2}.
Our Theorem~\ref{thm-equiv} is an attempt to combine and
bring together all of these various results, and add others too.
(Since initiating this work, we also learned of the recent review
\cite{bradley:2019exposition}, which presents certain equivalences
for reversible chains in terms of mixing conditions and
maximal correlations, which complement some of our conditions
\condref{eq-l2ge-Cmu} through \condref{eq-srl2-p-0}.
In addition, the recent volume \cite{Douc} expands upon
much of the material in \cite{MeynTweedie}.)
\bigskip

Most of the remainder of this paper is devoted to proving
Theorem~\ref{thm-equiv}.  The proof is divided up into different
sections below, in terms of which types of conditions are being
considered:
Section~\ref{sec-lemmas} provides some preliminary lemmas,
Section~\ref{sec-geometric} relates to various ``Geometric'' conditions,
Section~\ref{sec-V} relates to various conditions involving
$V$ functions and $L^\infty_V$ bounds,
Section~\ref{sec-spectral} relates to various spectral conditions,
and Section~\ref{sec-reversible} relates to various conditions
for reversible chains.
To help the reader (and ourselves) keep track, Figure~\ref{fig:statements}
provides a diagram showing which of our results prove implications
between which of the equivalent conditions.  Our proofs are somewhat
self-contained, but we use known results from the literature
(especially \cite{MeynTweedie}) where appropriate.
Section~\ref{sec-open} then presents some future
directions and open problems.

\ifx34
aside from a few places (notably
Lemmas~\ref{lem-small_sets_exist}
and~\ref{lem-total_variation_is_measurable}
and Propositions~\ref{proof:v->i}
and~\ref{proof:iii->xviii}
and~\ref{proof:v->xxiv}
and~\ref{proof:xxiv->vi})
\fi

\begin{figure}
    \centering
\begin{tikzpicture}[thick,scale=0.5, every node/.style={transform shape}, node distance = 5cm]
\begin{scope}[
every node/.style={fill=red!20, circle, thick, draw, minimum size=1.0cm,
  scale=1.0, regular polygon, regular polygon sides=6,
  inner sep=1.0, outer sep=0.5},
V-unif/.style={fill=blue!15,circle},
spec/.style={fill=brown!20, regular polygon, regular polygon sides=5,
			minimum size=1.0cm, inner sep=-1.5},
Linf/.style={fill=purple!30, regular polygon, regular polygon sides=9,
	scale=1.0, minimum size=1.0cm, inner sep=-2},
L2/.style={fill=green!15,rectangle, minimum size=1.0cm, inner sep=1.0}
]
    \node (n-geometric-erg) at (14,-15) {$\ref{eq-geometric-erg}$};
    \node (n-weak-geometric-erg) at (20,-13) {$\ref{eq-weak-geometric-erg}$};
    \node (n-gefalp) at (10,0) {$\ref{eq-gefalp}$};
    \node (n-gefslp) at (10,-5) {$\ref{eq-gefslp}$};
    \node (n-ssgesup) at (21,-8) {$\ref{eq-ssgesup}$};
    \node (n-ssge) at (17,-8) {$\ref{eq-ssge}$};
    \node (n-tau-c) at (12.5,-10) {$\ref{eq-tau-c}$};
    \node[V-unif] (n-dc-allj) at (8,-7) {$\ref{eq-dc-allj}$};
    \node[V-unif] (n-dc-nomom) at (8,-12) {$\ref{eq-dc-nomom}$};
    \node[V-unif] (n-vuex-nomom) at (3,-9) {$\ref{eq-vuex-nomom}$};
    \node[V-unif] (n-vuex-allj) at (8,12) {$\ref{eq-vuex-allj}$};
    \node[V-unif] (n-vuemu-nomom) at (3,-15) {$\ref{eq-vuemu-nomom}$};
    \node[V-unif] (n-vuemu-allj) at (10,5) {$\ref{eq-vuemu-allj}$};
    \node[spec] (n-sg-inf-somej) at (-6,-5.5) {$\ref{eq-sg-inf-somej}$};
    \node[spec] (n-sg-inf-allj) at (-6,10) {$\ref{eq-sg-inf-allj}$};
  \node[spec] (n-srlinf-p-pi-somej) at (-2,-9.5) {$\ref{eq-srlinf-p-pi-somej}$};
  \node[spec] (n-srlinf-p-pi-allj) at (-3,12) {$\ref{eq-srlinf-p-pi-allj}$};
    \node[spec] (n-srlinf-p-0-somej) at (-6,0) {$\ref{eq-srlinf-p-0-somej}$};
    \node[spec] (n-srlinf-p-0-allj) at (-6,4.5) {$\ref{eq-srlinf-p-0-allj}$};
    \node[Linf] (n-nlinf-p-pi-somej) at (-1,-4) {$\ref{eq-nlinf-p-pi-somej}$};
    \node[Linf] (n-nlinf-p-pi-allj) at (3,12) {$\ref{eq-nlinf-p-pi-allj}$};
    \node[Linf] (n-voinf-somej) at (-2,2) {$\ref{eq-voinf-somej}$};
    \node[Linf] (n-voinf-allj) at (-1,7) {$\ref{eq-voinf-allj}$};
    \node[Linf] (n-vinf-somej) at (3,2) {$\ref{eq-vinf-somej}$} ;
    \node[Linf] (n-vinf-allj) at (4,6) {$\ref{eq-vinf-allj}$} ;
    \node[Linf] (n-vinf0-somej) at (6,-2.5) {$\ref{eq-vinf0-somej}$};
    \node[Linf] (n-vinf0-allj) at (6,2) {$\ref{eq-vinf0-allj}$};
    \node[L2] (n-l2ge-Cmu) at (15,1) {$\ref{eq-l2ge-Cmu}$};
    \node[L2] (n-l2ge-noC) at (15,-4) {$\ref{eq-l2ge-noC}$};
    \node[L2] (n-sg-l2) at (20,8) {$\ref{eq-sg-l2}$};
    \node[L2] (n-srl2-p-pi) at (20,-2) {$\ref{eq-srl2-p-pi}$};
    \node[L2] (n-nl2-p-pi) at (20,3) {$\ref{eq-nl2-p-pi}$};
    \node[L2] (n-nl2-p-0) at (15,5) {$\ref{eq-nl2-p-0}$};
    \node[L2] (n-srl2-p-0) at (15,10) {$\ref{eq-srl2-p-0}$};
\end{scope}

\begin{scope}[>={Stealth[black]},
              every node/.style={fill=yellow!50,circle,scale=0.9,pos=.5,
				inner sep=2, minimum size=25},
              every edge/.style={draw=red,very thick}]
    \path [<->] (n-voinf-somej) edge node {$\ref{proof:xiii->xii}$} (n-nlinf-p-pi-somej);
    \path [<->] (n-voinf-allj) edge node {$\ref{proof:xiii->xii}$} (n-nlinf-p-pi-allj);
    \path [->] (n-vuex-allj) edge node[pos=.5] {$\ref{proof:viii->xiv}$} (n-vinf-allj);
\path [<->] (n-nlinf-p-pi-somej) edge[bend left=0] node[pos=.5] {$\ref{proof:x<->xii}$} (n-srlinf-p-pi-somej);
\path [<->] (n-nlinf-p-pi-allj) edge[bend left=0] node[pos=.5] {$\ref{proof:x<->xii}$} (n-srlinf-p-pi-allj);
    \path [->] (n-vuex-nomom) edge node[pos=.4] {$\ref{proof:viii->vii}$} (n-vuemu-nomom); 
    \path [->] (n-vuex-allj) edge[bend left=20] node[pos=.4] {$\ref{proof:viii->vii}$} (n-vuemu-allj); 
    \path [->] (n-vuemu-allj) edge node[pos=.4] {$\ref{proof:vii->iii}$} (n-gefalp); 
    \path [->] (n-vuemu-nomom) edge node[pos=.5] {$\ref{proof:mutoge}$} (n-geometric-erg); 

    \path [->] (n-nl2-p-pi) edge node[pos=.4] {$\ref{proof:revnewa}$} (n-l2ge-noC); 
    \path [->] (n-gefalp) edge node[pos=.4] {$\ref{proof:iii->iv}$} (n-gefslp); 
    \path [<->] (n-srl2-p-0) edge[bend left=60] node[pos=.5] {$\ref{proof:xviii<->xix}$} (n-sg-l2);
    \path [->] (n-nl2-p-0) edge[bend left=40] node[pos=.5] {$\ref{proof:xx<->xxi}$} (n-nl2-p-pi);
    \path [<->] (n-nl2-p-pi) edge[bend left=60] node[pos=.5] {$\ref{proof:xix<->xx}$} (n-srl2-p-pi);
    \path [<->] (n-srl2-p-0) edge[bend right=60] node[pos=.5] {$\ref{proof:xxi<->xxii}$} (n-nl2-p-0);
    \path [->] (n-gefslp) edge node {$\ref{proof:iv->v}$} (n-ssge);
    \path [->] (n-ssge) edge node {$\ref{proof:v->i}$} (n-geometric-erg);
    \path [->] (n-geometric-erg) edge node {$\ref{proof:i->ii}$} (n-weak-geometric-erg);
    \path [->] (n-weak-geometric-erg) edge node[pos=.45] {$\ref{proof:ii->vsup}$} (n-ssgesup);
    \path [->] (n-ssgesup) edge[bend right=60] node {$\ref{proof:ii->v}$} (n-ssge);
    \path [<->] (n-srlinf-p-0-somej) edge[bend left=40] node[pos=.5] {$\ref{proof:xi<->xiii}$} (n-voinf-somej);
    \path [<->] (n-srlinf-p-0-allj) edge[bend right=40] node[pos=.5] {$\ref{proof:xi<->xiii}$} (n-voinf-allj);
    \path [<->] (n-vinf0-somej) edge[bend left=00] node[pos=.5] {$\ref{proof:xiv<->xv}$} (n-vinf-somej);
    \path [<->] (n-vinf0-allj) edge[bend right=40] node[pos=.5] {$\ref{proof:xiv<->xv}$} (n-vinf-allj);
    \path [->] (n-vinf-allj) edge[bend right=40] node[pos=.4] {$\ref{proof:choosejone}$} (n-vinf-somej);
    \path [->] (n-ssge) edge node {$\ref{proof:v->xxiv}$} (n-tau-c);
    \path [->] (n-tau-c) edge node[pos=.4] {$\ref{proof:xxiv->vi}$} (n-dc-nomom);
    \path [<-] (n-dc-allj) edge[bend left=0] node[pos=.6] {$\ref{proof:vi'<->vi}$} (n-dc-nomom);
\path [<->] (n-nlinf-p-pi-somej) edge node {$\ref{proof:6.7a}$} (n-vinf-somej);
\path [<->] (n-nlinf-p-pi-allj) edge node {$\ref{proof:6.7a}$} (n-vinf-allj);
\path [->] (n-vinf-somej) edge node {$\ref{proof:6.7b}$} (n-vuex-nomom);
    \path [->] (n-gefalp) edge node {$\ref{proof:iii->xviii}$} (n-nl2-p-0);
    \path [->] (n-l2ge-Cmu) edge node {$\ref{proof:xvi->xvii'}$} (n-gefslp);
    \path [->] (n-l2ge-noC) edge node[pos=.42] {$\ref{proof:xvii->xvi}$} (n-l2ge-Cmu);

\path[->] (n-dc-allj) edge node[pos=.8] {\ref{proof:MT15}} (n-vuex-allj);
\path[<->] (n-sg-inf-somej) edge node[pos=.5] {$\ref{proof:gap<->rad}$} (n-srlinf-p-0-somej);
\path[<->] (n-sg-inf-allj) edge node[pos=.5] {$\ref{proof:gap<->rad}$} (n-srlinf-p-0-allj);

\end{scope}

    \matrix [column sep=5mm,
    ] at (7.5,16){
  \node[shape=regular polygon,draw ,regular polygon sides=6,fill=red!20,text width=.3cm,scale=.7,label=right:\normalsize Geometric]{}; &
  \node[shape=circle,draw,fill=blue!15,text width=.3cm,label=right:\normalsize $V$-function]{}; &
  \node[shape=regular polygon,draw ,regular polygon sides=5,fill=brown!20,text width=.3cm,scale=.7,label=right:\normalsize Spectral] {}; &
  \node[shape=regular polygon,draw ,regular polygon sides=9,fill=purple!30,text width=.3cm,scale=.7,label=right:\normalsize $L^\infty_V$-norm] {}; &
  \node[shape=regular polygon,draw ,regular polygon sides=4,fill=green!15,text width=.3cm,scale=.7,label=right:\normalsize Reversible] {};
\\
};
\end{tikzpicture}
    \caption{Diagram illustrating which of this paper's results
(yellow edge labels) provide proofs of
implications between which of the different equivalent conditions (nodes).
(All arrows touching a green rectangle assume that the chain is reversible.)}
    \label{fig:statements}
\end{figure}

\section{Preliminary Lemmas}\label{sec-lemmas}

We begin with some preliminary lemmas, which are used freely in the
sequel, and can be referred to as needed.

\begin{lemma}\label{lem-small_sets_exist}
    Let $P$ be the transition kernel of a $\phi$-irreducible, aperiodic Markov chain with stationary distribution $\pi$ on a countably generated state space $\X$. Then for any
    measurable subset $A \subseteq \X$ such that $\pi(A) > 0$, there exists a small
    set $S$, such that $S\subseteq A$.
\end{lemma}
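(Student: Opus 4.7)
The plan is to invoke the classical construction of small sets on countably generated state spaces due to \cite{JainJamison} and \cite{Orey}: every $\phi$-irreducible Markov kernel on such a space admits a covering of $\X$ by countably many small sets, and consequently any set of positive maximal irreducibility measure contains a small subset. Since our hypothesis provides $\phi$-irreducibility but the set $A$ is only assumed to have positive $\pi$-measure (not positive $\phi$-measure), a brief reduction between the two notions of positivity will be needed.

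First I would pass to a maximal irreducibility measure $\psi$ (see e.g.\ \cite[Proposition~4.2.2]{MeynTweedie}), which satisfies $\phi \ll \psi$ and with respect to which every irreducibility measure is absolutely continuous. Next I would verify that $\pi$ is itself an irreducibility measure and in particular $\psi$-equivalent to $\pi$: if $\pi(B)>0$, then stationarity gives $\pi(B) = \int P^n(x,B)\,\pi(dx)$, so $P^n(x,B)>0$ on a $\pi$-positive set of $x$, and combining this with $\phi$-irreducibility one deduces $\psi \ll \pi$ (cf.\ \cite[Proposition~10.1.2 and Theorem~10.4.9]{MeynTweedie}). Thus a $\pi$-positive set is automatically a $\psi$-positive set.

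Having identified the two notions of positivity, I would apply the small-set covering: there exist small sets $\{S_k\}_{k\in\N}$ with $\X = \bigcup_k S_k$. Since $\pi(A)>0$, countable additivity forces $\pi(A \cap S_k) > 0$ for some $k$. The last ingredient is that any measurable subset of a small set is itself small, because the minorisation $P^m(x,\cdot) \geq \nu(\cdot)$ is a pointwise statement in $x$ and therefore persists on any subset (and the subset still has positive $\pi$-measure by choice of $k$). Setting $S := A \cap S_k$ then gives the desired small subset of $A$.

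The only real obstacle is the measure-theoretic bookkeeping in the first step, i.e.\ confirming that $\pi$ can be used interchangeably with the maximal irreducibility measure so that the Jain--Jamison/Orey existence theorem can be invoked relative to $\pi$; everything after that is a direct application of standard structural facts for $\phi$-irreducible chains on countably generated state spaces.
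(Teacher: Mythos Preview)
Your proposal is correct and follows the same route as the paper, which simply cites the classical Doeblin/Jain--Jamison/Orey construction (via \cite[Theorems~5.2.1 and~5.2.2]{MeynTweedie}); you have merely made explicit the reduction from $\pi$-positivity to $\psi$-positivity that those references already subsume. One cosmetic slip: you write $\psi \ll \pi$ but the conclusion you draw (``a $\pi$-positive set is automatically a $\psi$-positive set'') is $\pi \ll \psi$; since the stationary measure and the maximal irreducibility measure are in fact equivalent this does not affect the argument.
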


\begin{proof}
This result goes back to \cite{Doeblin1940,JainJamison, Orey}, and uses
that $\F$ is countably generated; see e.g.\
Theorems~5.2.1 and~5.2.2 in \cite{MeynTweedie}.
\end{proof}

\begin{lemma}\label{lem-total_variation_is_measurable}
    Let $P$ be the transition kernel of a $\phi$-irreducible, aperiodic Markov chain with stationary distribution $\pi$ on a countably generated state space $\X$. Then, the function $D_n : \X \to [0,\infty)$ defined by $D_n(x) = \tvnorm{P^n(x,\cdot) - \pi(\cdot)}$ is measurable.
\end{lemma}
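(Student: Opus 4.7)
The plan is to write $F_n$ as a countable supremum of measurable functions, where the countability is extracted from the hypothesis that $\F$ is countably generated. By the definition of a Markov kernel, for each fixed $A \in \F$ the map $x \mapsto P^n(x,A)$ is measurable (inductively in $n$ via the Chapman--Kolmogorov construction given in Section~\ref{sec-definitions}). Hence $x \mapsto |P^n(x,A) - \pi(A)|$ is measurable for each fixed $A$. Since
\[
F_n(x) \ = \ \sup_{A \in \F} |P^n(x,A) - \pi(A)|,
\]
the only issue is that this supremum is over the potentially uncountable collection $\F$.

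To reduce to a countable supremum, I would use the generating sets $A_1, A_2, \ldots$ from the countably generated hypothesis to build the countable algebra $\mathcal{A}_0$ they generate (finite unions of finite intersections of the $A_i$ and their complements), noting $\sigma(\mathcal{A}_0) = \F$. The key claim is then that for every $x \in \X$,
\[
\sup_{A \in \F} |P^n(x,A) - \pi(A)| \ = \ \sup_{A \in \mathcal{A}_0} |P^n(x,A) - \pi(A)| .
\]
The inequality ``$\geq$'' is trivial; for ``$\leq$'', fix $x$, write $\mu = P^n(x,\cdot)$, $\nu = \pi$, and $\lambda = \mu + \nu$, a finite measure. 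Given any $B \in \F$ and $\epsilon > 0$, the standard approximation result for finite measures on a countably generated $\sigma$-algebra (a consequence of the monotone class theorem applied to sets well-approximated by $\mathcal{A}_0$) produces $A \in \mathcal{A}_0$ with $\lambda(A \triangle B) < \epsilon$. Then $|\mu(A) - \mu(B)| \leq \lambda(A\triangle B) < \epsilon$ and likewise $|\nu(A) - \nu(B)| < \epsilon$, so
\[
|\mu(A) - \nu(A)| \ \geq \ |\mu(B) - \nu(B)| - 2\epsilon ,
\]
and taking supremum over $B \in \F$ on the right and letting $\epsilon \to 0$ gives the desired inequality.

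Once this equality is established, $F_n(x) = \sup_{A \in \mathcal{A}_0} |P^n(x,A) - \pi(A)|$ is a countable supremum of measurable functions of $x$ and is therefore measurable. The main obstacle is the finite-measure approximation step: one must verify that for a fixed $x$, a single countable algebra $\mathcal{A}_0$ approximates \emph{every} $B \in \F$ in $\lambda_x$-symmetric-difference, which is immediate from the monotone class argument but is the only non-routine ingredient in the proof.
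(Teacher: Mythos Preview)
Your proposal is correct and follows essentially the same approach as the paper: the paper's proof simply cites the Appendix of \cite{hybrid} for the general fact that $x \mapsto \sup_{A\in\F} \nu(x,A)$ is measurable whenever each $x \mapsto \nu(x,A)$ is, on a countably generated space. Your argument is precisely a self-contained proof of that cited result, via the same countable-algebra approximation idea (reduce the supremum to the countable algebra generated by the $A_i$, using a monotone-class/symmetric-difference approximation with respect to the finite measure $P^n(x,\cdot)+\pi$).
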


\begin{proof}
This follows from \cite[Appendix]{hybrid}, which proves that for any
bounded signed measure $\nu(\cdot,A)$ on a countably generated space
such that the function
$x \mapsto \nu(x,A)$ is measurable for each fixed $A\in\mathcal{F}$,
the function $x \mapsto \sup\limits_{A\in\mathcal{F}}\nu(x,A)$
is also measurable.
\end{proof}

\ifx34
\begin{lemma}\label{lemma-Lp}
For each $1\leq p < \infty$, consider
\begin{equation*}
    L^{p}_{\pi}(\X) \coloneqq \{ f:\X \to \R : \int_{\X}|f|^{p}d\pi < \infty \}, \quad \quad \norm{f}_{p} \coloneqq \left(\int_{\X}|f|^{p}d\pi\right)^{1/p}
\end{equation*}

Then, for each $1\leq p < s < \infty$, $L^{s}_{\pi}(\X) \subseteq L^{p}_{\pi}(\X)$, and for all $f\in L^{s}_{\pi}(\X)$, $\norm{f}_{p} \leq \norm{f}_{s}$.
\end{lemma}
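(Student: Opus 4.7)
The plan is to exploit the fact that $\pi$ is a probability measure, so that higher $L^s$ integrability forces lower $L^p$ integrability, with the norm inequality following from a direct application of Hölder's inequality (equivalently, Jensen's inequality applied to the convex function $t \mapsto t^{s/p}$).

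Concretely, fix $1 \le p < s < \infty$ and let $f \in L^s_\pi(\X)$. I would set $q = s/p > 1$ and let $q'$ denote its conjugate exponent, $q' = s/(s-p)$. Writing $|f|^p = |f|^p \cdot 1$ and applying Hölder's inequality with exponents $q, q'$ yields
\[
\int_\X |f|^p \, d\pi
\ \le \ \Big( \int_\X \big(|f|^p\big)^{q} d\pi \Big)^{1/q} \, \Big( \int_\X 1^{q'} d\pi \Big)^{1/q'}
\ = \ \Big( \int_\X |f|^{s} d\pi \Big)^{p/s} \, \pi(\X)^{(s-p)/s} .
\]
Since $\pi(\X) = 1$, the second factor equals $1$, so $\int_\X |f|^p \, d\pi \le \|f\|_s^p$. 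Taking $p$-th roots gives $\|f\|_p \le \|f\|_s$, which in particular is finite, establishing both the norm inequality and the inclusion $L^s_\pi(\X) \subseteq L^p_\pi(\X)$.

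There is no serious obstacle here: the only thing the argument relies on, beyond Hölder, is that $\pi$ is a probability measure (so $\pi(\X) = 1$); on a general $\sigma$-finite measure space the inclusion fails. The $p=1$ endpoint is included without modification since $s > p \ge 1$ keeps $q = s/p$ strictly above $1$. One could alternatively phrase the step as Jensen's inequality: $(\int |f|^p d\pi)^{s/p} \le \int |f|^s d\pi$, since $t \mapsto t^{s/p}$ is convex and $\pi$ is a probability measure, which is the same computation in a different guise.
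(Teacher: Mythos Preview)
Your proof is correct and follows essentially the same route as the paper's: both apply H\"older's inequality to $|f|^{p}\cdot 1_{\X}$ with exponents $s/p$ and $s/(s-p)$, using $\pi(\X)=1$ to kill the second factor and then taking $p$-th roots.
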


\begin{proof}
    If $1\leq p < s < \infty$ and $f\in L^{s}_{\pi}(\X)$, then $|f|^{p}\in L^{s/p}_{\pi}(\X)$ and 
    \begin{equation*}
        \norm{|f|^{p}}_{s/p} = \left( \int_{\X} |f|^{s}d\pi \right)^{p/s} = \norm{f}^{p}_{s}.
    \end{equation*}
    
    On the other hand, we have that $\one_{\X}\in L^{q}_{\pi}(\X)$ for all $q\in[1,\infty)$ and $\norm{\one_{\X}}_{q} = 1$. From H\"{o}lder's inequality, $|f|^{p} = \one_{\X}|f|^{p}$ is integrable and
    
    \begin{equation*}
        \norm{f}_{p}^{p} = \int_{\X} \one_{\X}|f|^{p}d\pi \leq \norm{\one_{\X}}_{s/(s-p)}\norm{|f|^{p}}_{s/p} = \norm{f}_{s}^{p}.  
    \end{equation*}
    
    From this, we can conclude that $f\in L^{p}_{\pi}(\X)$ and $\norm{f}_{p} \leq \norm{f}_{s}$.
\end{proof}
\fi

\begin{lemma}\label{lem-TVL1}
For probability measures $\mu_1$ and $\mu_2$,
    $\tvnorm{\mu_{1}-\mu_{2}} \, = \,
\half \, \norm{\mu_1-\mu_2}_{L^{1}(\pi)}$.
\end{lemma}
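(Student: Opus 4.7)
The plan is to invoke the Hahn-Jordan decomposition of the signed measure $\nu := \mu_1 - \mu_2$. Write $\X = P \sqcup N$ for a Hahn decomposition of $\nu$, so that $\nu^+(A) = \nu(A\cap P)$ and $\nu^-(A) = -\nu(A\cap N)$ define mutually singular positive measures with $\nu = \nu^+ - \nu^-$. The crucial observation is that $\mu_1$ and $\mu_2$ are both probability measures, so $\nu(\X) = 0$, which forces $\nu^+(\X) = \nu^-(\X)$.

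From the $L^1(\pi)$ definition (which for $p=1$ is given by $\nu^+(\X) + \nu^-(\X)$ and does \emph{not} require absolute continuity with respect to $\pi$), this already yields
\[
\norm{\mu_1 - \mu_2}_{L^1(\pi)} \ = \ \nu^+(\X) + \nu^-(\X) \ = \ 2\,\nu^+(\X).
\]
It therefore suffices to show that $\tvnorm{\mu_1 - \mu_2} = \nu^+(\X)$.

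For the upper bound, note that for any $A\in\F$,
\[
\nu(A) \ = \ \nu^+(A) - \nu^-(A) \ \leq \ \nu^+(A) \ \leq \ \nu^+(\X),
\]
and symmetrically $-\nu(A) \leq \nu^-(\X) = \nu^+(\X)$, giving $|\nu(A)| \leq \nu^+(\X)$. For the matching lower bound, take $A = P$; then $\nu(P) = \nu^+(P) - \nu^-(P) = \nu^+(\X) - 0 = \nu^+(\X)$, so the supremum defining $\tvnorm{\mu_1-\mu_2}$ is attained and equals $\nu^+(\X)$. Combining these gives $\tvnorm{\mu_1-\mu_2} = \nu^+(\X) = \tfrac{1}{2}\norm{\mu_1-\mu_2}_{L^1(\pi)}$.

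There is no real obstacle here; the argument is completely standard once one notes that $\nu(\X)=0$ forces balance between the positive and negative parts. The only small subtlety worth flagging is that one must use the $p=1$ case of the definition literally (via the Jordan decomposition), rather than the Radon–Nikodym version, since $\mu_1 - \mu_2$ need not be absolutely continuous with respect to $\pi$ in general.
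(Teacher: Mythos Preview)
Your proof is correct and follows essentially the same approach as the paper: both identify the maximising set in the total-variation supremum as the positive part of a Hahn decomposition of $\mu_1-\mu_2$, and both use the $p=1$ definition $\|\mu_1-\mu_2\|_{L^1(\pi)} = (\mu_1-\mu_2)^+(\X)+(\mu_1-\mu_2)^-(\X)$ together with $(\mu_1-\mu_2)(\X)=0$. The only cosmetic difference is that the paper first passes to densities with respect to the dominating measure $\mu_1+\mu_2$, whereas you invoke the Hahn--Jordan decomposition directly; your remark that absolute continuity with respect to $\pi$ is not needed here is exactly right.
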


\begin{proof}
Recall that
    $\tvnorm{\mu_{1}-\mu_{2}} \, = \,
\sup\limits_{A\in\F}|\mu_{1}(A)-\mu_{2}(A)|$.
Let $\nu = \mu_1 + \mu_2$ so that $\mu_i \ll \nu$,
and let $f_i = {d\mu_i \over d\nu}$.  Then
$\mu_{1}(A)-\mu_{2}(A)
= \int_A [f_1(x)-f_2(x)] \, \nu(dx)$.
This is maximised when $A = A_+ := \{ x : f_1(x) > f_2(x) \}$, and its
negative takes the same maximum when $A = A_+^C$.  Hence,
$$
\tvnorm{\mu_{1}-\mu_{2}}
\ = \ \mu_1(A_+) - \mu_2(A_+)
\ = \ \int_{A_+} [f_1(x)-f_2(x)] \, \nu(dx).
$$
But then
\begin{align*}
\norm{\mu_1-\mu_2}_{L^{1}(\pi)}
&\ = \ (\mu_1-\mu_2)^+(\X) + (\mu_1-\mu_2)^-(\X) \\
&\ = \ \int_{A_+} [f_1(x) - f_2(x)] \, \nu(dx)
+ \int_{A_+^C} [f_2(x) - f_1(x)] \, \nu(dx) \\
&\ = \ 2 \, \int_{A_+} [f_1(x) - f_2(x)] \, \nu(dx) \\
&\ = \ 2 \, \tvnorm{\mu_{1}-\mu_{2}} .
\qedhere
\end{align*}
\end{proof}

\ifx34
\def\meddot{\bullet}
\def\disjunionrel{ {\buildrel \meddot \over \cup} }
\def\disjunion{ \, \disjunionrel \, }
Let $\rho = \mu_1-\mu_2$, and let
$\X = \X^+ \disjunion \X^-$ be a Hahn Decomposition for $\rho$,
so that $\rho(S) \ge 0$ for all $S \subseteq \X^+$
and $\rho(S) \le 0$ for all $S \subseteq \X^-$.
Then $\rho(A)$ is maximised when $A=\X^+$, and minimised when $A=\X^-$.
Hence,
$$
\tvnorm{\mu_{1}-\mu_{2}}
\ = \ \sup\limits_{A\in\F}|\rho(A)|
\ = \ \max\left[ \rho(\X^+), \, \rho(\X^-) \right]
$$
\fi

\begin{lemma}\label{lem-L1L2}
For any signed measure $\mu \ll \pi$, we have
$\norm{\mu}_{L^{1}(\pi)} \leq \norm{\mu}_{L^{2}(\pi)}$
(though one or both of those quantities might be infinite).
\end{lemma}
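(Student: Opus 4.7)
The plan is to reduce both norms to integrals of the Radon–Nikodym derivative $f := d\mu/d\pi$ and then apply Cauchy–Schwarz, exploiting the fact that $\pi$ is a probability measure (so $\pi(\X)=1$).

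First I would observe that since $\mu \ll \pi$, the definition of $\norm{\mu}_{L^1(\pi)}$ given in Section~\ref{sec-definitions} (in the $\mu\ll\pi$ branch) agrees with the Jordan decomposition formula: $\norm{\mu}_{L^1(\pi)} = \int_\X |f|\, d\pi$, where $f = d\mu/d\pi$. Similarly $\norm{\mu}_{L^2(\pi)}^2 = \int_\X |f|^2 \, d\pi$. If $\norm{\mu}_{L^2(\pi)} = \infty$ there is nothing to prove, so assume it is finite, which means $f \in L^2(\X,\pi)$ in the usual function-space sense.

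Next I would write $|f| = |f|\cdot \one_{\X}$ and apply the Cauchy–Schwarz inequality in the Hilbert space $L^2(\X,\pi)$ to the functions $|f|$ and $\one_\X$:
\[
\int_\X |f|\, d\pi \ = \ \int_\X |f| \cdot \one_\X \, d\pi
\ \leq \ \left(\int_\X |f|^2 \, d\pi\right)^{1/2} \left(\int_\X \one_\X^2 \, d\pi\right)^{1/2}.
\]
Since $\pi$ is a probability measure, the second factor equals $\pi(\X)^{1/2} = 1$. Hence $\int_\X |f| \, d\pi \leq \left(\int_\X |f|^2 \, d\pi\right)^{1/2}$, which is exactly $\norm{\mu}_{L^1(\pi)} \leq \norm{\mu}_{L^2(\pi)}$.

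There is really no obstacle here; the only thing to be slightly careful about is the dual definition of $\norm{\mu}_{L^1(\pi)}$ (via $\mu^+(\X)+\mu^-(\X)$), but the excerpt already notes that the two definitions coincide when $p=1$ and $\mu\ll\pi$, so no extra work is needed. An alternative phrasing would be Jensen's inequality applied to the convex function $t\mapsto t^2$ on the probability space $(\X,\pi)$, giving $\left(\int |f|\, d\pi\right)^2 \leq \int |f|^2 \, d\pi$; but the Cauchy–Schwarz route is the most direct.
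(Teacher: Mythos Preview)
Your proof is correct and is essentially the same as the paper's: both reduce to Cauchy--Schwarz with the constant function $\one_\X$ (equivalently, the measure $\pi$) as the second factor, using $\pi(\X)=1$. The only cosmetic difference is that the paper phrases Cauchy--Schwarz via the measure inner product $\langle |\mu|,\pi\rangle$ rather than the function inner product $\int |f|\cdot \one_\X\, d\pi$, but these are identical once one unwinds the definitions.
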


\begin{proof}
Recall the definition $\langle \mu,\nu \rangle
= \int_\X {d\mu \over d\pi} \, {d\nu \over d\pi} \, d\pi$.  Hence,
if $|\mu|$ is the measure with
${d|\mu| \over d\pi} = \big|{d\mu \over d\pi}\big|$, then
$\langle |\mu|,\pi \rangle
= \int_\X \big|{d\mu \over d\pi}\big| \, (1) \, d\pi
= \mu^+(\X) + \mu^-(\X) = \|\mu\|_{L^{1}(\pi)}$.
Also
$$
\norm{\, |\mu|\, }_{L^{2}(\pi)}
\ = \ \sqrt{ \int_\X \big|{d\mu \over d\pi}\big|^2 \, d\pi }
\ = \ \sqrt{ \int_\X \big({d\mu \over d\pi}\big)^2 \, d\pi }
\ = \ \norm{\mu}_{L^{2}(\pi)},
$$
and
$$
\norm{\pi}_{L^{2}(\pi)}
\ = \ \sqrt{ \int_\X \big({d\pi \over d\pi}\big)^2 \, d\pi }
\ = \ \sqrt{ \int_\X \big(1\big)^2 \, d\pi }
\ = \ 1.
$$
So, by the Cauchy-Schwarz inequality,
$$
\|\mu\|_{L^{1}(\pi)}
\ = \ \langle |\mu|,\pi \rangle
\ \le \ \norm{\, |\mu|\, }_{L^{2}(\pi)} \, \norm{\pi}_{L^{2}(\pi)}
\ = \ \norm{\mu}_{L^{2}(\pi)} \, (1)
\ = \ \norm{\mu}_{L^{2}(\pi)}.
\qedhere
$$
\end{proof}

\begin{lemma}\label{prop-Lp}
For all $1 \le p < s < \infty$,
we have $L^{s}(\pi) \subseteq L^{p}(\pi)$.
\end{lemma}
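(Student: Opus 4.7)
The plan is to fix $\mu \in L^{s}(\pi)$ and show $\|\mu\|_{L^{p}(\pi)} \le \|\mu\|_{L^{s}(\pi)} < \infty$, so that $\mu \in L^{p}(\pi)$. First I would handle the absolute-continuity issue. Since $s>1$, the definition of $\|\cdot\|_{L^{s}(\pi)}$ forces any $\mu$ with $\|\mu\|_{L^{s}(\pi)}<\infty$ to satisfy $\mu\ll\pi$, so we may set $f = d\mu/d\pi$ and assume $\int_\X |f|^{s}\,d\pi < \infty$.

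Next I would apply Jensen's inequality to the convex function $\phi(t)=t^{s/p}$ (convex because $s/p>1$) using the fact that $\pi$ is a probability measure, which gives
\[
\left(\int_\X |f|^{p}\,d\pi\right)^{s/p}
\ \le \ \int_\X |f|^{s}\,d\pi.
\]
(Equivalently, one can apply H\"older's inequality with exponents $s/p$ and $s/(s-p)$ to $|f|^{p}\cdot 1$.) Raising both sides to the $1/s$ power yields $\|\mu\|_{L^{p}(\pi)} \le \|\mu\|_{L^{s}(\pi)}$ in the case $p>1$, so that $\mu\in L^{p}(\pi)$ as required.

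The only nuisance is the $p=1$ case, because the definition of $\|\mu\|_{L^{1}(\pi)}$ in the excerpt is given as $\mu^{+}(\X)+\mu^{-}(\X)$ rather than as $\int |d\mu/d\pi|\,d\pi$. However, the paper already notes parenthetically that when $\mu\ll\pi$ the two definitions coincide, so the same Jensen/H\"older estimate applies with $p=1$ and delivers $\|\mu\|_{L^{1}(\pi)}\le\|\mu\|_{L^{s}(\pi)}<\infty$. Thus $\mu\in L^{1}(\pi)$, completing the proof. There is no real obstacle here; the only point worth being careful about is recording that membership in $L^{s}(\pi)$ with $s>1$ automatically supplies $\mu\ll\pi$, which is what makes $d\mu/d\pi$ available and allows the $p=1$ case to be treated in the same line as the others.
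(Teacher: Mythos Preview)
Your proof is correct but takes a different route from the paper's. The paper uses the elementary pointwise bound $|t|^{p}\le 1+|t|^{s}$ (valid since $|t|^{p}\le 1$ when $|t|\le 1$ and $|t|^{p}\le |t|^{s}$ when $|t|\ge 1$), integrates it against $\pi$, and concludes $\|\mu\|_{L^{p}(\pi)}^{p}\le 1+\|\mu\|_{L^{s}(\pi)}^{s}<\infty$; this uses only that $\pi$ is a probability measure to make the ``$1$'' integrable. Your Jensen/H\"older argument instead yields the sharper inequality $\|\mu\|_{L^{p}(\pi)}\le \|\mu\|_{L^{s}(\pi)}$, which not only proves the inclusion but in fact subsumes the paper's separate Lemma~\ref{lem-L1L2} (the case $p=1$, $s=2$) as a special instance. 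Both arguments hinge on $\pi(\X)=1$ and on the observation, which you correctly flag, that $\mu\in L^{s}(\pi)$ with $s>1$ forces $\mu\ll\pi$ so that $d\mu/d\pi$ is available throughout, including in the $p=1$ case. The paper's version is marginally more self-contained (no appeal to Jensen or H\"older), while yours delivers more information.
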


\begin{proof}
Let $1 \le p < s < \infty$, and let $\mu \in L^{s}(\pi)$ so
$\norm{\mu}_{L^{s}(\pi)} < \infty$.  Then,
$$
\norm{\mu}_{L^{p}(\pi)}^p
\ = \ \int_{\X} \left| \dfrac{d\mu}{d\pi} \right|^{p} d\pi
\ \le \ \int_{\X} \left( 1 + \left| \dfrac{d\mu}{d\pi} \right|^{s} \right) d\pi
\ = \ 1 + \norm{\mu}_{L^{s}(\pi)}^s
\ < \ \infty,
$$
so $\mu \in L^{p}(\pi)$.
\end{proof}

We next present some lemmas which mention spectra of operators.

\begin{lemma}\label{lem-directsum}
Suppose an operator $P$ on a Banach space $\vecsp$ can be decomposed
as a {\rm direct sum} $P = P_1 \oplus P_2$, where
$\vecsp = \vecsp_1 \times \vecsp_2$ and each $P_i$ is an operator
on $\vecsp_i$, meaning that $P(h_1,h_2) = (P_1 h_1,P_2 h_2)$
for all $h_1\in\vecsp_1$ and $h_2\in\vecsp_2$.  Then
$\s_{\vecsp}(P) = \s_{\vecsp_1}(P_1) \cup \s_{\vecsp_2}(P_2)$,
i.e.\ the spectrum of $P$ is the union of the spectra of the
sub-operators $P_1$ and $P_2$.
\end{lemma}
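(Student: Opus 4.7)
The plan is to reduce the lemma to the statement that, for bounded operators $T_1$ on $\vecsp_1$ and $T_2$ on $\vecsp_2$, the direct sum $T := T_1 \oplus T_2$ on the product Banach space $\vecsp = \vecsp_1\times\vecsp_2$ (with any of the usual equivalent product norms) is invertible if and only if each $T_i$ is invertible. Applying this with $T_i = \lambda I_{\vecsp_i} - P_i$ and observing $\lambda I_\vecsp - P = (\lambda I_{\vecsp_1}-P_1) \oplus (\lambda I_{\vecsp_2}-P_2)$ then yields $\lambda \notin \s_\vecsp(P)$ iff $\lambda \notin \s_{\vecsp_1}(P_1)$ and $\lambda \notin \s_{\vecsp_2}(P_2)$, which is equivalent to the claimed identity $\s_\vecsp(P) = \s_{\vecsp_1}(P_1) \cup \s_{\vecsp_2}(P_2)$.

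For the easy direction of the reduction, if each $T_i$ has a bounded inverse $T_i^{-1}$ on $\vecsp_i$, I would exhibit $T_1^{-1}\oplus T_2^{-1}$ as a two-sided bounded inverse of $T$: a direct verification shows $(T_1^{-1}\oplus T_2^{-1})\,T = T\,(T_1^{-1}\oplus T_2^{-1}) = I_\vecsp$, and boundedness is immediate from the product norm.

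For the harder direction, assume $T$ is invertible on $\vecsp$. I would check injectivity and surjectivity of each $T_i$ by testing against vectors supported on a single factor. For injectivity of $T_1$: if $T_1 h_1 = 0$, then $T(h_1,0) = (0,0)$, so by injectivity of $T$ we get $h_1 = 0$; symmetrically for $T_2$. For surjectivity of $T_1$: given any $h_1\in\vecsp_1$, set $(g_1,g_2) := T^{-1}(h_1,0)$; then $(T_1 g_1,T_2 g_2) = (h_1,0)$, whence $T_1 g_1 = h_1$; symmetrically for $T_2$. Thus each $T_i$ is a bounded bijection between Banach spaces, and the bounded inverse theorem (open mapping theorem, as in \cite{rudin:1991functional}) supplies the boundedness of $T_i^{-1}$.

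The main (and really only) subtlety is ensuring that the inverses of $T_1$ and $T_2$ are bounded, which is handled cleanly by the open mapping theorem once bijectivity is in place; everything else is algebraic manipulation with the direct sum decomposition. Combining the two directions finishes the iff for invertibility, and hence the spectral identity.
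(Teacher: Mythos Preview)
Your proof is correct and follows essentially the same reduction as the paper: both arguments show that $\lambda I_\vecsp - P = (\lambda I_{\vecsp_1}-P_1)\oplus(\lambda I_{\vecsp_2}-P_2)$ is invertible iff each summand is, and the easy direction (exhibiting $T_1^{-1}\oplus T_2^{-1}$) is identical. The one difference is in the converse: the paper writes the inverse of $\lambda I - P$ in block form $\begin{psmallmatrix}A&B\\C&D\end{psmallmatrix}$ and reads off directly from the block equations that $A$ and $D$ are bounded two-sided inverses of $\lambda I_1-P_1$ and $\lambda I_2-P_2$, thereby avoiding the open mapping theorem altogether. Your route via bijectivity plus the bounded inverse theorem is perfectly valid but slightly heavier; in fact your own surjectivity step already hands you the bounded inverse (the first coordinate of $T^{-1}(h_1,0)$), so the appeal to open mapping is not strictly needed.
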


\begin{proof}
Since $P = P_1 \oplus P_2$, therefore $P$ has the block decomposition
$$
P \ = \
\begin{pmatrix}
P_1 & 0 \\
0 & P_2
\end{pmatrix}
$$
with respect to $\vecsp = \vecsp_1 \times \vecsp_2$.
If $\lambda \not\in \s_{\vecsp_1}(P_1) \cup \s_{\vecsp_2}(P_2)$,
then there are inverse operators $A_i$ on $\vecsp_i$ such that
$(\lambda I_i - P_i) A_i
= A_i (\lambda I_i - P_i) = I_i$ for $i=1,2$, whence
$(\lambda I - P) (A_1,A_2) =
(A_1,A_2) (\lambda I - P) = I_1 \oplus I_2 = I$,
so $\lambda \not\in \s_{\vecsp}(P)$.
Conversely, if $\lambda \not\in \s_{\vecsp}(P)$, then $\lambda I - P$
has some inverse operator, so in block form we have
$$
\begin{pmatrix}
\lambda I_1 - P_1 & 0 \\
0 & \lambda I_2 - P_2
\end{pmatrix}
\begin{pmatrix}
A & B \\
C & D
\end{pmatrix}
\ = \
\begin{pmatrix}
A & B \\
C & D
\end{pmatrix}
\begin{pmatrix}
\lambda I_1 - P_1 & 0 \\
0 & \lambda I_2 - P_2
\end{pmatrix}
\ = \ I
\ = \
\begin{pmatrix}
I_1 & 0 \\
0 & I_2
\end{pmatrix}
.
$$
It follows that $(\lambda I_1 - P_1) A = A (\lambda I_1 - P_1) = I_1$
and $(\lambda I_2 - P_2) D = D (\lambda I_2 - P_2) = I_2$, so that
$\lambda \not\in \s_{\vecsp_1}(P_1) \cup \s_{\vecsp_2}(P_2)$.
\end{proof}

\begin{lemma}\label{lem-P-Linf}
Let $P$ be
the transition kernel of a $\phi$-irreducible Markov chain with stationary distribution $\pi(\cdot)$, and let $V:\X \to [1,\infty]$ be a $\pi$-a.e.-finite measurable function. Then, the following hold:
\begin{enumerate}[label=\arabic*), ref=\textit{\roman*}]
    \item $|f|_{V} \leq 1$ if and only if $|f(x)| \leq V(x)$ for all
$x\in\X$.

\item
If there is $j\in\N$ with $\pi(V^j)<\infty$, then $\pi(V)<\infty$.
    \item If $P$ is a bounded operator on $\lv$, then
$\s_{\lv}(P) \setminus \{1\} \, \subseteq \, \s_{\lvz}(P)$.

\item The number~1 is an eigenvalue of $P$ with multiplicity 1,
regarding $P$ as an operator on
$L^{p}(\pi)$ for any $1 \le p < \infty$.
Furthermore, if $\pi(V^j)<\infty$ for some $j\in\N$, then this also holds
regarding $P$ as an operator on $\lv$ or $\lvz$.

\item If there are $\lambda<1$ and $b<\infty$ and a small set $S\in\F$ with 
$PV(x) \le \lambda \, V(x) + b \, \one_S(x)$ for all $x\in\X$,
then $\pi(V)<\infty$.

\end{enumerate}
\end{lemma}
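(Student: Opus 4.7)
Parts (1) and (3) are essentially definition-unpacking. For (1), from $|f|_{V} = \sup_x |f(x)|/V(x)$ one sees immediately that $|f|_V \le 1$ iff $|f(x)| \le V(x)$ pointwise (with the usual convention $|f(x)|/\infty = 0$). For (3), since $V \ge 1$ and $j\ge 1$ we have $V \le V^j$ pointwise, so $\pi(V) \le \pi(V^j) < \infty$.

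For (2), my plan is to decompose $L^\infty_V$ as a topological direct sum $\mathbb{R}\cdot\mathbf{1}_\X \oplus L^\infty_{V,0}$. The definition of $L^\infty_{V,0}$ implicitly requires $\pi$-integrability of elements of $L^\infty_V$, which is the case here since $|f|\le |f|_V \, V$ and (as follows from the applications of this lemma in the paper) $\pi(V)<\infty$. Then every $f \in L^\infty_V$ admits the decomposition $f = \pi(f)\,\mathbf{1}_\X + (f-\pi(f)\,\mathbf{1}_\X)$ with the second summand in $L^\infty_{V,0}$, and this splitting is preserved by $P$: we have $P\mathbf{1}_\X = \mathbf{1}_\X$, while for $f\in L^\infty_{V,0}$ stationarity gives $\pi(Pf)=\pi(f)=0$, so $Pf\in L^\infty_{V,0}$. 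On $\mathbb{R}\cdot\mathbf{1}_\X$ the operator $P$ acts as the identity (so has spectrum $\{1\}$), so Lemma~\ref{lem-directsum} yields $\s_{L^\infty_V}(P) = \{1\}\cup \s_{L^\infty_{V,0}}(P)$, from which the claim follows.

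For (4), the circular-looking naive approach is to integrate the drift inequality against $\pi$: this gives $\pi(V)\le \lambda \pi(V)+b\pi(S)$, which is informative only once $\pi(V)<\infty$ is known, so it cannot be used directly. Instead, my plan is to \emph{iterate the drift pointwise} and then pass to the stationary limit through a bounded truncation of $V$. A straightforward induction using $PV \le \lambda V + b\mathbf{1}_S$ and $P^k\mathbf{1}_S \le 1$ gives the uniform bound
\[
P^n V(x) \ \le \ \lambda^n V(x) \ + \ b\sum_{k=0}^{n-1}\lambda^k
\ \le \ \lambda^n V(x) \ + \ \frac{b}{1-\lambda}
\qquad (x\in\X,\ n\in\N).
\]
Since $V$ is $\pi$-a.e.~finite and $\phi$-irreducibility plus aperiodicity imply $P^n(x,\cdot)\to\pi$ (pointwise on $\F$) for $\pi$-a.e.\ $x$, we may pick $x^*\in\X$ with $V(x^*)<\infty$ and $P^n(x^*,A)\to\pi(A)$ for every $A\in\F$. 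For each fixed $N$, the truncation $V\wedge N$ is bounded and measurable, so by approximation by simple functions, $\int (V\wedge N)\, dP^n(x^*,\cdot) \to \pi(V\wedge N)$; combined with $\int (V\wedge N)\, dP^n(x^*,\cdot) \le P^n V(x^*) \le \lambda^n V(x^*)+b/(1-\lambda)$, taking $n\to\infty$ yields $\pi(V\wedge N) \le b/(1-\lambda)$. Monotone convergence in $N$ then gives $\pi(V) \le b/(1-\lambda) < \infty$.

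The main obstacle is precisely this circularity in (4); the truncation trick is what decouples the argument, allowing bounded-function convergence of the transition kernels to stand in for an unavailable a priori integrability of $V$, after which monotone convergence promotes the bound to all of $V$.
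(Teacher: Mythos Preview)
Your arguments for parts (1)--(3) coincide with the paper's: the paper also unpacks the definition of $|f|_V$ for (1), uses the pointwise inequality $V\le V^j+1$ (noting, as you do, that the ``$+1$'' is redundant since $V\ge 1$) for (3), and for (2) writes $f=f_0+c$ with $c=\pi(f)$ to obtain the direct-sum decomposition $P=P_0\oplus I_{\mathbb R}$ and then invokes Lemma~\ref{lem-directsum}. Your explicit remark that the decomposition in (2) presupposes $\pi(V)<\infty$ (so that $\pi(f)$ is defined for $f\in L^\infty_V$) is a point the paper leaves implicit.

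For part (4) your route is genuinely different. The paper does not argue directly: it simply invokes the implication ``$(iii)\Rightarrow(i)$'' of \cite[Theorem~14.0.1]{MeynTweedie} with $f=(1-\lambda)V$ to conclude $\pi(V)<\infty$, and only afterwards observes (as a parenthetical) that integrating the drift against $\pi$ then yields the explicit bound $\pi(V)\le b/(1-\lambda)$. Your argument is self-contained: iterate the drift to get $P^nV(x)\le \lambda^nV(x)+b/(1-\lambda)$, pass to $\pi$ through a bounded truncation $V\wedge N$ using setwise convergence $P^n(x^*,\cdot)\to\pi$, and finish with monotone convergence. This is correct and yields the same quantitative bound without appealing to the Meyn--Tweedie machinery; the price is that you rely on aperiodicity (a standing assumption of the paper, though not restated in the lemma) to obtain convergence of $P^n(x^*,\cdot)$, whereas the cited $f$-regularity result in \cite{MeynTweedie} does not require it.
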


\begin{proof}
\begin{enumerate}[label=\arabic*), ref=\textit{\roman*}]
    \item
    
    If $|f|_{V} \leq 1$, then for each $x\in \X$, 
    \begin{align*}
    \dfrac{|f(x)|}{V(x)}\leq |f|_V  \leq 1,
    \end{align*}
    from which we conclude that $|f|\leq V$.
Conversely, if $|f|\leq V$, then for each $x\in \X$,
$\dfrac{|f(x)|}{V(x)}\leq 1$, and thus
$|f|_{V} = \sup\limits_{x\in\X}\dfrac{|f(x)|}{V(x)}\leq 1$. 

\item This follows since we always have $V(x) \le V^j(x) + 1$.
[In fact, since $V \ge 1$, the ``$+1$'' is not actually necessary.]

    \item
Any $f\in \lv$ can be written as $f=f_0+c$ where $f_0\in
L_{V,0}^{\infty}$ and $c=\pi(f)$.  Then $Pf = Pf_0 + c$.
It follows that
$P$ has the direct sum representation $P = P_0 \oplus I_\R$,
where $I_\R$ is the identity operator on $\R$.
Hence, by Lemma~\ref{lem-directsum},
$\s_{\lv}(P) = \s_{\lvz}(P) \cup \s_{\R}(I_\R)
= \s_{\lvz}(P) \cup \{1\}$.
So, $\s_{\lv}(P) \setminus \{1\} \subseteq \s_{\lvz}(P)$, as claimed.

\item
Since $P$ is $\phi$-irreducible with stationary probability
measure~$\pi$, it follows that $P$ is ``positive'' as defined
on \cite[p.~235]{MeynTweedie}.
Hence, $P$ is recurrent by \cite[Proposition~10.1.1]{MeynTweedie}.
Then, \cite[Theorem~10.0.1]{MeynTweedie} shows that $\pi$ is unique,
i.e.\ $P$ has a unique invariant probability measure.
This implies by \cite[Proposition~22.1.2]{Douc} that~1
is an eigenvalue of $P$ with multiplicity 1 on any $L^p(\pi)$ space.
Furthermore, if $\pi(V^j)<\infty$, then
$|f| \le C V$ implies that $\pi(|f|^j) \le C^j \pi(V^j) < \infty$,
so in that case $\lv$ and $\lvz$ are subspaces of $L^j(\pi)$,
and hence the result holds on $\lv$ and $\lvz$ too.

\item
The implication ``$(iii) \implies (i)$'' of~\cite[Theorem~14.0.1]{MeynTweedie}
with the choice $f(x) = (1-\lambda) \, V(x)$ shows that
$\pi(f)<\infty$,
i.e.\ $(1-\lambda) \, \pi(V)<\infty$,
hence $\pi(V)<\infty$.
[In fact, once we know that $\pi(V)<\infty$, then since
$PV \le \lambda \, V + b$, it follows that
$\pi(PV) \le \pi(\lambda V + b)$,
i.e.\ $\pi(V) \le \lambda \, \pi(V) + b$,
and hence $\pi(V) \le b \, \big/ (1-\lambda)$.]
\qedhere
\end{enumerate}
\end{proof}

\ifextralemmas

\begin{lemma}\label{lem-eigenmult}
Let $P$ be the transition kernel of a $\phi$-irreducible Markov chain
on a state space $\X$,
and let $\vecsp$ be either
a vector space of signed measures on $\X$ which includes $\pi$,
or a vector space of functions $\X\to\R$ which includes the constant
functions.  Then $P$ as an operator on $\vecsp$ has eigenvalue~1 with
multiplicity~1.
\end{lemma}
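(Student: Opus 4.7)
The plan is to split the claim into existence and multiplicity. Existence is immediate: in the measures case, $\pi\in\vecsp$ and $\pi P=\pi$, so $\pi$ is an eigenvector with eigenvalue $1$; in the functions case, the constant function $\mathbf{1}_\X\in\vecsp$ and $P\mathbf{1}_\X=\mathbf{1}_\X$. Hence only one-dimensionality of the $1$-eigenspace remains to be shown.

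The key input in both cases is the standard fact that $\phi$-irreducibility together with the existence of a stationary probability distribution $\pi$ forces the chain to be positive Harris recurrent on a set of full $\pi$-measure, and that $\pi$ is, up to a positive scalar, the unique $\sigma$-finite invariant measure (see e.g.\ \cite[Chapter~10]{MeynTweedie}). As consequences, Harris recurrence yields the Cesàro ergodic limit $n^{-1}\sum_{k=0}^{n-1}P^k(x,A)\to\pi(A)$ for every $A\in\F$ and $\pi$-a.e.\ $x$, and implies that every measurable solution $f$ of $Pf=f$ is $\pi$-a.e.\ constant.

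For the measures case, I would take $\mu\in\vecsp$ with $\mu P=\mu$, iterate to $\mu P^n=\mu$, and write
\[
\mu \ = \ \frac{1}{n}\sum_{k=0}^{n-1}\mu P^k \ = \ \int_{\X}\Big(\frac{1}{n}\sum_{k=0}^{n-1}P^k(x,\cdot)\Big)\,\mu(dx);
\]
passing to the limit by bounded convergence on the Harris set, after a Jordan decomposition to reduce to finite positive measures (each of which must be a scalar multiple of $\pi$ by uniqueness), one concludes $\mu=\mu(\X)\,\pi$, so the $1$-eigenspace is spanned by $\pi$. For the functions case, I would take $f\in\vecsp$ with $Pf=f$ and apply the Harris-chain fact directly to get $f=\pi(f)$ $\pi$-a.e., so every $1$-eigenfunction coincides $\pi$-a.e.\ with the constant $\pi(f)\in\vecsp$. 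In both cases the $1$-eigenspace is spanned by a single element, giving multiplicity~$1$.

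The hard part is that $\vecsp$ is only required to contain $\pi$ or the constants, with no built-in $L^1$ or absolute-continuity structure, so one cannot directly appeal to results phrased for $\pi$-absolutely continuous measures or bounded harmonic functions. Harris recurrence is exactly what bridges this gap: it delivers Cesàro convergence at $\pi$-a.e.\ starting point, and guarantees $\pi$-a.e.\ constancy of every measurable solution of $Pf=f$ without any a priori integrability hypothesis. Once those classical facts are invoked, the measure and function cases become dual arguments and the multiplicity-one claim drops out.
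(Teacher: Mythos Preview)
The paper's own proof of this lemma is actually unfinished: after noting that $1$ is an eigenvalue, it sets up a contradiction argument in the measures case (choosing a linear combination $\mu=a\pi+b\pi'$ with both $\mu^+$ and $\mu^-$ nontrivial) and then simply stops, the intended overlap/cancellation argument surviving only as a commented-out sketch. So a clean comparison is impossible, but the intended routes differ: the paper aims for a direct contradiction from $\phi$-irreducibility via the Hahn decomposition, whereas you reach for Harris recurrence and uniqueness of the invariant measure.

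Your outline has real gaps. In the measures case you say the Jordan parts are ``each a scalar multiple of $\pi$ by uniqueness'', but uniqueness applies only to \emph{invariant} positive measures, and $\mu P=\mu$ does not by itself give $\mu^{\pm}P=\mu^{\pm}$; this is true but needs an argument (from $\mu^+P-\mu^-P=\mu^+-\mu^-$ together with mass conservation one shows $\mu^+P$ is carried by the Hahn-positive set, hence equals $\mu^+$), and once you have that the Ces\`aro step is superfluous. In the functions case, your assertion that Harris recurrence forces every measurable solution of $Pf=f$ to be $\pi$-a.e.\ constant ``without any a priori integrability hypothesis'' is not a standard result; the usual statements (e.g.\ \cite[Theorem~17.1.5]{MeynTweedie}) assume $f$ is bounded, and you give no argument for the unbounded case. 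Finally, even granting $\pi$-a.e.\ constancy, this yields multiplicity~$1$ in $\vecsp$ only if $\vecsp$ identifies functions agreeing $\pi$-a.e., which the hypothesis does not guarantee.
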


\begin{proof}
It follows that 1 is an eigenvalue since $\pi P = \pi$
and also $Pc=c$ for constant functions $c$.
To show multiplicity~1, suppose first that there is a
signed measure $\pi'$, linearly independent from $\pi$, such
that also $\pi' P = \pi'$.  
Then there is some linear combination $\mu = a\pi + b\pi'$ such
that both $\mu^+$ and $\mu^-$ are non-zero measures.

\ifx34
Set $\nu = \pi + \pi'$, so that $\pi \ll \nu$ and $\pi' \ll \nu$
if f^+ = {d\mu^+ \over d\pi}
and f^- = {d\mu^- \over d\pi}, then P(f^+ - f^-) = f^+ - f^-,
so \pi( P^n(f^+ - f^-) ) = \pi( f^+ - f^- ).  But for some n, create
overlap, get cancellation, get contradiction
\fi
\end{proof}

\medskip
Finally, we present some lemmas which are specific to reversible chains.

\begin{lemma}\label{lem-reversibility-alln}
If $P$ is a transition kernel of a reversible Markov chain,
then for all $n\geq 1$,
\begin{equation}\label{eq:reversibility-alln}
\pi(dx) \, P^{n}(x,dy) \ = \ \pi(dy) \, P^{n}(y,dx).
\end{equation}
\end{lemma}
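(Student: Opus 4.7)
The cleanest approach is to translate the statement into an adjoint/self-adjointness-style identity and then induct. Specifically, the equality of measures $\pi(dx) \, P^n(x,dy) = \pi(dy) \, P^n(y,dx)$ on the product space is equivalent (by the monotone class theorem / linearity in indicators) to the assertion that
\[
\int_{\X} f(x) \, (P^n g)(x) \, \pi(dx) \ = \ \int_{\X} g(y) \, (P^n f)(y) \, \pi(dy)
\]
for all bounded measurable $f, g : \X \to \R$. So the plan is to prove this integral identity by induction on $n$ and then translate it back into the measure equality \eqref{eq:reversibility-alln}.

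The base case $n = 1$ is precisely reversibility: integrating the defining relation $\pi(dx) \, P(x,dy) = \pi(dy) \, P(y,dx)$ against $f(x) g(y)$, applying Fubini (permissible since $f, g$ are bounded and $\pi$, $P(x,\cdot)$ are probability measures), produces $\int f \cdot Pg \, d\pi = \int g \cdot Pf \, d\pi$. For the inductive step, assume $\int f \cdot P^n g \, d\pi = \int g \cdot P^n f \, d\pi$ for all bounded measurable $f, g$. Then, writing $P^{n+1} = P \circ P^n$ and noting that $P^n g$ is a bounded measurable function whenever $g$ is, we compute
\[
\int f \cdot P^{n+1} g \, d\pi \ = \ \int f \cdot P(P^n g) \, d\pi \ = \ \int (Pf) \cdot (P^n g) \, d\pi \ = \ \int g \cdot P^n(Pf) \, d\pi \ = \ \int g \cdot P^{n+1} f \, d\pi,
\]
where the second equality uses the base case applied to $f$ and $P^n g$, and the third equality uses the inductive hypothesis applied to $Pf$ and $g$.

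To translate back, note that for each pair $A, B \in \F$, the identity applied to $f = \one_A$ and $g = \one_B$ gives
\[
\int_A P^n(x, B) \, \pi(dx) \ = \ \int_B P^n(y, A) \, \pi(dy),
\]
which says exactly that the two measures on $(\X \times \X, \F \otimes \F)$ defined by $\pi(dx) \, P^n(x,dy)$ and $\pi(dy) \, P^n(y,dx)$ agree on all measurable rectangles $A \times B$. Since $\F \otimes \F$ is generated by the rectangles and these two measures are finite (in fact probability measures on $\X \times \X$), the $\pi$--$\lambda$ theorem (or uniqueness in the Carath\'eodory extension theorem) forces them to coincide on all of $\F \otimes \F$, which is \eqref{eq:reversibility-alln}.

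There is no real obstacle here: the only mildly delicate point is to be careful that the integration-by-parts style manipulation in the inductive step applies $P$ to a bounded measurable function (so that all integrals are well-defined and Fubini is justified), and that passage from rectangles to the full product $\sigma$-algebra is done via a uniqueness-of-measure argument rather than any unwarranted Fubini swap. Both of these are routine given boundedness of the indicators and finiteness of $\pi$.
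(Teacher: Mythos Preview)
Your proof is correct and follows essentially the same inductive scheme as the paper: both argue by induction on $n$, invoking the one-step reversibility together with the inductive hypothesis in the step. The only difference is cosmetic---you phrase the identity as self-adjointness of $P^n$ against bounded test functions and close with a $\pi$--$\lambda$ argument on rectangles, whereas the paper manipulates the measure expressions $\pi(dx)\,P^n(x,dy)$ directly; your version is slightly more explicit about the measure-theoretic justifications but the mathematical content is the same.
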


\begin{proof}
For $n=1$, \pref{eq:reversibility-alln} holds by reversibility.
Furthermore, if \pref{eq:reversibility-alln} holds for $n$,
then for $n+1$,
\begin{align*}
    \pi(dx) P^{n+1} (x,dy) = \pi (dx) \int_{z\in\X} P(z,dy) P^{n}(x,dz) 
    &= \int_{z\in\X} P(z,dy) \pi (dx) P^{n}(x,dz)\\ 
\mbox{(Induction hypothesis)} \ \
		&= \int_{z\in\X} P(z,dy) \pi (dz) P^{n}(z,dx)\\
\mbox{(Reversibility)} \ \ &= \int_{z\in\X} P^{n}(z,dx) \pi (dy) P(y,dz)\\
    &= \, \pi (dy) \, P^{n+1}(y,dx),
\end{align*}
so \pref{eq:reversibility-alln} holds for $n+1$.
Hence, by induction, \pref{eq:reversibility-alln} holds for all $n \ge 1$.
\end{proof}

\begin{lemma}\label{lem-derivate-l2}
If P is a transition kernel of a reversible Markov chain with stationary distribution $\pi$, then for each $\mu\in\lpp$ and each $n\in \N$,
$$
    \dfrac{d(\mu P^{n})}{d\pi} = P^{n}\dfrac{d\mu}{d\pi}.
$$
\end{lemma}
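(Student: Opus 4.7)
The plan is to verify the identity by integrating both sides against an arbitrary $A\in\F$ and identifying the resulting density via uniqueness of Radon--Nikodym derivatives. Write $f := d\mu/d\pi$, which exists as a function since $\mu\in L^2(\pi)$ implies $\mu\ll\pi$ by definition; moreover $f\in L^1(\pi)$ because $\pi$ is a probability measure and Cauchy--Schwarz gives $\int |f|\, d\pi \le \|f\|_{L^2(\pi)}\cdot 1 <\infty$ (this is essentially Lemma~\ref{lem-L1L2}).

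I would proceed in three steps. First, expand the left action of $P^n$ on $\mu$ to obtain $(\mu P^n)(A) = \int_\X P^n(x,A)\, f(x)\, \pi(dx)$, and write $P^n(x,A) = \int_A P^n(x,dy)$ so that the expression becomes an iterated integral against the joint measure $\pi(dx)\,P^n(x,dy)$. Second, apply Lemma~\ref{lem-reversibility-alln} to replace this joint measure by $\pi(dy)\,P^n(y,dx)$ and use Fubini to swap the order of integration. Third, recognise the inner integral as $\int_\X f(x)\, P^n(y,dx) = (P^n f)(y)$ by definition of the right action of $P^n$ on functions, so that $(\mu P^n)(A) = \int_A (P^n f)(y)\, \pi(dy)$. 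Since this holds for every $A\in\F$, uniqueness of Radon--Nikodym derivatives gives $d(\mu P^n)/d\pi = P^n f$ $\pi$-a.e., as claimed.

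The only genuine obstacle is the measure-theoretic bookkeeping. One must verify that $P^n f$ is a well-defined function $\pi$-a.e., which follows from $f\in L^1(\pi)$ combined with stationarity $\pi P^n = \pi$: indeed $\int\int |f(x)|\, P^n(y,dx)\, \pi(dy) = \int |f|\, d(\pi P^n) = \int |f|\, d\pi < \infty$, so $\int |f(x)|\, P^n(y,dx) < \infty$ for $\pi$-a.e.\ $y$. The same computation, together with $P^n(x,A)\le 1$, shows the joint measure $\pi(dx)\,P^n(x,dy)$ makes $|f(x)|\one_{A}(y)$ integrable, which justifies the Fubini exchange for the signed integrand $f$. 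With these two points in place, the chain of equalities outlined above goes through without any further delicate estimates, and Lemma~\ref{lem-reversibility-alln} already handles the $n$-step version of detailed balance that makes the whole argument work.
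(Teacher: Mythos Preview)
Your proposal is correct and follows essentially the same route as the paper: expand $(\mu P^n)(A)$ as a double integral in $\pi(dx)\,P^n(x,dy)$, invoke Lemma~\ref{lem-reversibility-alln} to swap to $\pi(dy)\,P^n(y,dx)$, identify the inner integral as $(P^n f)(y)$, and conclude by uniqueness of Radon--Nikodym derivatives. If anything, you are more careful than the paper's own proof in explicitly justifying the Fubini step and the $\pi$-a.e.\ finiteness of $P^n f$ via $f\in L^1(\pi)$ and stationarity.
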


\begin{proof}
For all $A \in \F$ and $n\in\N$,
\begin{align*}
\mu P^n(A)
\ = \ \int_{x\in\X} P^n(x,A) \, \mu(dx)
\ &= \ \int_{x\in\X}\int_{y\in A} \dfrac{d\mu}{d\pi}(x) \, P^n(x,dy) \, \pi(dx)\\
\mbox{(Fubini's Theorem)} \
\ &= \ \int_{y\in A}\int_{x\in\X} \dfrac{d\mu}{d\pi}(x) \, \pi(dx) \, P^n(x,dy)\\
\mbox{(Lemma \ref{lem-reversibility-alln})} \
\ &= \ \int_{y\in A}\int_{x\in\X} \dfrac{d\mu}{d\pi}(x) \, \pi(dy) \, P^n(y,dx)\\
&= \ \int_{y\in A}
\Big( \int_{x\in\X} \dfrac{d\mu}{d\pi}(x) \, P^n(y,dx) \Big)
\, \pi(dy)\\
&= \ \int_{y\in A} \Big( P^n\dfrac{d\mu}{d\pi}(y) \Big) \, \pi(dy).
\end{align*}
Since this is true for all $A\in\F$, the result follows.
\end{proof}

\fi

\begin{lemma}\label{lem-P-L2}
Let $P$ be the transition kernel of a reversible Markov chain with stationary distribution $\pi$, such that $P$ is a bounded operator on $\lpp$. Then, the following holds:
\begin{enumerate}[label=\arabic*), ref=\textit{\roman*}]
	\item The operator $P - \Pi$ is self-adjoint.
	\item For each $\mu \in L^{2}(\pi)$,
the signed measure $\mu - \mu(\X)\pi$ is orthogonal to $\pi$.
	\item For each $\mu \in \lpp$, $\norm{\mu - \mu(\X)\pi}_{\lpp}^2 = \norm{\mu}_{\lpp}^2 - \mu(\X)^2$.
	\item $\s_{\lpp}(P)\setminus \{1\}
\, \subseteq \, \s_{\pi^\bot}(P)$.
\end{enumerate}
\end{lemma}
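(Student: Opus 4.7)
The plan is to prove the four parts in order, since part (4) relies on the direct-sum decomposition used in the spirit of Lemma~\ref{lem-P-Linf}(2), and parts (2)--(3) provide the geometry needed for that decomposition.

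For part (1), I would first observe that $\Pi$ is self-adjoint on $\lpp$: since $\mu\Pi = \mu(\X)\,\pi$, we have $\langle \mu\Pi,\nu\rangle = \mu(\X)\,\nu(\X)$, which is symmetric in $\mu$ and $\nu$. So it suffices to show $P$ itself is self-adjoint. Using Lemma~\ref{lem-derivate-l2}, for $\mu,\nu\in\lpp$,
\[
\langle \mu P,\nu\rangle \;=\; \int_\X \frac{d(\mu P)}{d\pi}\,\frac{d\nu}{d\pi}\,d\pi \;=\; \int_\X \Bigl(P\tfrac{d\mu}{d\pi}\Bigr)\,\frac{d\nu}{d\pi}\,d\pi,
\]
and by reversibility applied as an identity for functions (which follows from the detailed balance $\pi(dx)P(x,dy)=\pi(dy)P(y,dx)$ and Fubini, just as in Lemma~\ref{lem-reversibility-alln}), this equals $\int_\X \frac{d\mu}{d\pi}\,(P\tfrac{d\nu}{d\pi})\,d\pi = \langle \mu,\nu P\rangle$.

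For part (2), compute directly using $\langle\mu,\pi\rangle = \mu(\X)$:
\[
\langle \mu - \mu(\X)\pi,\,\pi\rangle \;=\; \mu(\X) \;-\; \mu(\X)\,\langle \pi,\pi\rangle \;=\; \mu(\X) - \mu(\X) \;=\; 0.
\]
Part (3) is then the Pythagorean theorem: since $\mu = (\mu - \mu(\X)\pi) + \mu(\X)\pi$ with the two summands orthogonal by (2), and since $\|\mu(\X)\pi\|_{\lpp}^2 = \mu(\X)^2\,\|\pi\|_{\lpp}^2 = \mu(\X)^2$, subtracting gives $\|\mu - \mu(\X)\pi\|_{\lpp}^2 = \|\mu\|_{\lpp}^2 - \mu(\X)^2$.

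For part (4), which I expect to be the main technical step, the idea parallels Lemma~\ref{lem-P-Linf}(2): decompose $\lpp = \pi^{\bot} \times \operatorname{span}(\pi)$ via $\mu \mapsto (\mu - \mu(\X)\pi,\,\mu(\X)\pi)$, which is a well-defined direct-sum decomposition of Banach spaces (both summands are closed, the first as an orthogonal complement, the second as a one-dimensional subspace), with norm equivalence given by (3). I then check that $P$ respects this decomposition: since total mass is preserved under $P$, $(\mu P)(\X) = \mu(\X)$, so $(\mu - \mu(\X)\pi)P = \mu P - \mu(\X)\pi \in \pi^{\bot}$; and $(\mu(\X)\pi)P = \mu(\X)\pi$ by stationarity, so $P$ acts as the identity on $\operatorname{span}(\pi)$. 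Hence $P = (P|_{\pi^\bot}) \oplus I_{\operatorname{span}(\pi)}$, and Lemma~\ref{lem-directsum} gives $\s_{\lpp}(P) = \s_{\pi^\bot}(P) \cup \s_{\operatorname{span}(\pi)}(I) = \s_{\pi^\bot}(P) \cup \{1\}$, from which $\s_{\lpp}(P)\setminus\{1\} \subseteq \s_{\pi^\bot}(P)$ follows. The one subtlety to confirm is that the direct-sum representation really is valid as operators on Banach spaces with the given norms, but this is immediate since the two subspaces are orthogonal and complete.
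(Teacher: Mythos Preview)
Your proposal is correct and follows essentially the same approach as the paper: both show $P$ and $\Pi$ are separately self-adjoint for part~(1), compute the inner product directly for part~(2), and for part~(4) use the orthogonal decomposition $\lpp = \pi^\bot \times \operatorname{span}(\pi)$ together with Lemma~\ref{lem-directsum}. The only cosmetic differences are that you invoke Pythagoras for part~(3) where the paper expands the integrand, and you supply more detail (via Lemma~\ref{lem-derivate-l2}) for the self-adjointness of $P$ where the paper simply asserts it.
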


\begin{proof}
\begin{enumerate}[label=\arabic*), ref=\textit{\roman*}]

	\item For $\mu,\nu \in \lpp$, we have
	    $\langle \mu(P - \Pi), \nu \rangle
	\, = \, \langle \mu P, \nu \rangle - \langle \mu \Pi, \nu \rangle$.
Now, since $P$ is reversible, it is self-adjoint on $\lpp$, so
$\langle \mu P, \nu \rangle = \langle \nu P, \mu \rangle$.
Also, we compute that
$\langle \mu \Pi, \nu \rangle
= \langle \mu(\X) \pi, \nu \rangle
= \mu(\X) \, \nu(\X)
= \langle \nu \Pi, \mu \rangle$.
Hence,
$\langle \mu(P - \Pi), \nu \rangle
= \langle \nu(P - \Pi), \mu \rangle$,
so $P - \Pi$ is self-adjoint.
	
	\item Let $\mu \in L^{2}(\pi)$, then,
	\[
	\langle \mu - \mu(\X)\pi, \pi \rangle
= \langle \mu,\pi\rangle - \mu(\X)\langle \pi, \pi \rangle
= \langle \mu,\pi\rangle - \langle \mu,\pi\rangle \norm{\pi}_{\lpp}
= \langle \mu,\pi\rangle - \langle \mu,\pi\rangle = 0.
	\]

	\item Let $\mu\in \lpp$. Then,
	\begin{align*}
	    \norm{\mu - \mu(\X)\pi}_{\lpp}^2 
	        &= \int_{\X}\left| \dfrac{d\mu}{d\pi}(y) - 
\mu(\X) \, (1) \right|^2\pi(dy)\\
	        &= \int_{\X} \left[ \left(\dfrac{d\mu}{d\pi}(y)\right)^2
- 2 \, \mu(\X)\dfrac{d\mu}{d\pi}(y) + \mu(\X)^2\right]\pi(dy)\\
	        &= \norm{\mu}_{\lpp}^2 - 2 \mu(\X)^2 + \mu(\X)^2 \\
	        &= \norm{\mu}_{\lpp}^2 - \mu(\X)^2.
	\end{align*}

    \item
Any signed measure $\mu\in\lpp$ can be decomposed as
$\mu = \mu_0 + c \, \pi$, where $c= \langle \mu,\pi \rangle = \mu(\X)$,
and $\langle \mu_0,\pi \rangle = \mu_0(\X) = 0$, so $\mu_0 \in \pi^\bot$.
Then $\mu P = \mu_0 P + c \, \pi$.  It follows that
$P$ has the direct sum representation $P = P|_{\pi^{\bot}} \oplus I_\R$
with respect to $\lpp = \pi^\bot \times \R$.
Hence, by Lemma~\ref{lem-directsum},
$\s_{\lpp}(P) = \s_{\pi^\bot}(P) \cup \{1\}$,
so $\s_{\lpp}(P) \setminus \{1\} \subseteq \s_{\pi^\bot}(P)$, as claimed.
\end{enumerate}
\end{proof}

\begin{lemma}\label{lem-equality-l2}
Let $P$ be a transition kernel from a reversible Markov chain with stationary distribution $\pi$. Then,
\[
\norm{P -\Pi}_{\lpp}
\ = \
\norm{P}_{\pi^\bot}.
\]
\end{lemma}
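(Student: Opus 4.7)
The plan is to show both inequalities by exploiting the orthogonal decomposition $\lpp = \pi^\bot \oplus \R\pi$ developed in Lemma~\ref{lem-P-L2}. The key observation is that for any $\mu \in \lpp$ with $c := \mu(\X)$, setting $\mu_0 := \mu - c\pi \in \pi^\bot$ gives
\[
\mu(P-\Pi) \ = \ \mu P - c\pi \ = \ (\mu_0 + c\pi)P - c\pi \ = \ \mu_0 P,
\]
since $\pi P = \pi$. Moreover $(\mu_0 P)(\X) = \mu_0(\X) = 0$, so $\mu_0 P \in \pi^\bot$ as well. Part~3 of Lemma~\ref{lem-P-L2} also gives the Pythagorean identity
\[
\norm{\mu_0}_{\lpp}^2 \ = \ \norm{\mu}_{\lpp}^2 - c^2 \ \le \ \norm{\mu}_{\lpp}^2.
\]

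For the inequality $\norm{P-\Pi}_{\lpp} \le \norm{P}_{\pi^\bot}$, I take any $\mu \in \lpp$ with $\norm{\mu}_{\lpp}=1$, decompose as above, and estimate
\[
\norm{\mu(P-\Pi)}_{\lpp} \ = \ \norm{\mu_0 P}_{\lpp} \ \le \ \norm{P}_{\pi^\bot} \, \norm{\mu_0}_{\lpp} \ \le \ \norm{P}_{\pi^\bot},
\]
where the first inequality uses that $\mu_0 \in \pi^\bot$ and the definition of $\norm{P}_{\pi^\bot}$. Taking the supremum over such $\mu$ gives the claim.

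For the reverse inequality $\norm{P}_{\pi^\bot} \le \norm{P-\Pi}_{\lpp}$, I take any $\mu_0 \in \pi^\bot$ with $\norm{\mu_0}_{\lpp}=1$, so $c = \mu_0(\X) = 0$. Then the identity above specializes to $\mu_0 P = \mu_0(P-\Pi)$, hence
\[
\norm{\mu_0 P}_{\lpp} \ = \ \norm{\mu_0(P-\Pi)}_{\lpp} \ \le \ \norm{P-\Pi}_{\lpp} .
\]
Taking the supremum over $\mu_0 \in \pi^\bot$ of unit norm yields the result. There is no real obstacle in this argument; the only subtle point is to verify the direct-sum decomposition in the first place, but this is exactly what Lemma~\ref{lem-P-L2} supplies (with reversibility used only to ensure that $P$ is a bounded operator on $\lpp$ so that $P|_{\pi^\bot}$ makes sense as a bounded operator).
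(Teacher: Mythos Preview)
Your proof is correct and follows essentially the same approach as the paper's: both arguments rest on the decomposition $\mu = \mu_0 + c\pi$ with $\mu_0 \in \pi^\bot$, the key identity $\mu(P-\Pi) = \mu_0 P$, and the Pythagorean inequality $\norm{\mu_0}_{\lpp} \le \norm{\mu}_{\lpp}$ from Lemma~\ref{lem-P-L2}. The only difference is presentational: you split into two explicit inequalities, whereas the paper writes the supremum defining $\norm{P-\Pi}_{\lpp}$ directly and observes it is attained at $c=0$.
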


\begin{proof}
Any $\mu\in\lpp$ can be written as $\mu = \mu_0 + c \, \pi$,
where $c=\mu(\X)$ and $\mu_0 \in \pi^\bot$ so $\mu_0(\X)=0$.
Then $\mu_0 \Pi = \mu_0(\X) \, \pi = 0$, so
$$
\mu(P-\Pi)
\ = \ (\mu_0+c \, \pi)(P-\Pi)
\ = \ \mu_0 P + c \, \pi - 0 - c \, \pi
\ = \ \mu_0 P .
$$
Also $\norm{\mu}_{\lpp} = 
\norm{\mu_0}_{\lpp} + c^2
\ge \norm{\mu_0}_{\lpp}$.
Hence,
$$
\norm{P-\Pi}_{\lpp}
\ = \ \sup_{0 < \norm{\mu}_{\lpp} < \infty}
	{||\mu(P-\Pi)||_{\lpp} \over \norm{\mu}_{\lpp}}
\ = \ \sup_{0 < \norm{\mu}_{\lpp} < \infty}
	{||\mu_0 P||_{\lpp} \over \norm{\mu_0}_{\lpp} + c^2}
.
$$
This supremum is achieved when $c=0$, i.e.\ when $\mu=\mu_0 \in \pi^\bot$,
so that
\[
\norm{P-\Pi}_{\lpp}
\ = \ \sup_{0 < \norm{\mu_0}_{\lpp} < \infty \atop \mu_0 \in \pi^\bot}
	{||\mu_0 P||_{\lpp} \over \norm{\mu_0}_{\lpp}}
\ = \ \norm{P}_{\pi^\bot}.
\qedhere
\]
\end{proof}

\section{Proofs for Geometric Conditions}
\label{sec-geometric}

We now begin proving the actual equivalences of the various conditions in
Theorem~\ref{thm-equiv}, as per the plan illustrated in
Figure~\ref{fig:statements}.  We begin with some results related to
some of the ``geometric'' conditions.

\begin{proposition}\label{proof:iii->iv}
\pref{eq-gefalp} $\implies$ \pref{eq-gefslp}.
\end{proposition}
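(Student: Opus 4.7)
The plan is very short, because the implication in question is a straightforward weakening of the hypothesis. Condition \pref{eq-gefalp} asserts a geometric convergence bound that is quantified over \emph{every} $p\in(1,\infty)$, with the rate $\rho_p<1$ allowed to depend only on $p$ (not on the starting measure $\mu$). Condition \pref{eq-gefslp} merely asks for \emph{some} $p\in(1,\infty)$ for which each probability measure $\mu\in L^p(\pi)$ admits constants $\rho_\mu<1$ and $C_\mu<\infty$ giving a geometric total-variation bound. Thus \pref{eq-gefalp} is strictly stronger both in its universal quantifier over $p$ and in the uniformity of its rate.

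My plan is to simply instantiate the universal quantifier in \pref{eq-gefalp}. First I would pick any fixed $p\in(1,\infty)$ — say $p=2$, though the choice is irrelevant. By \pref{eq-gefalp} applied to this $p$, there is $\rho_p<1$ such that for every probability measure $\mu\in L^p(\pi)$ one can find $C_{p,\mu}<\infty$ with
\[
\tvnorm{\mu P^n(\cdot)-\pi(\cdot)} \ \leq\ C_{p,\mu}\,\rho_p^{\,n}
\qquad\hbox{for all }n\in\N.
\]
Then, for this same $p$, I define $\rho_\mu:=\rho_p$ and $C_\mu:=C_{p,\mu}$ for each probability measure $\mu\in L^p(\pi)$. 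Since $\rho_\mu=\rho_p<1$ and $C_\mu<\infty$, the bound in \pref{eq-gefslp} holds for this choice of $p$, which establishes \pref{eq-gefslp}.

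There is no real obstacle here: the only thing to observe is that the uniform rate $\rho_p$ from \pref{eq-gefalp} is a valid (even better, $\mu$-independent) choice for the nonuniform rate $\rho_\mu$ allowed by \pref{eq-gefslp}. No measure-theoretic or spectral machinery is required, and no structural property of reversibility, drift, or small sets enters the argument.
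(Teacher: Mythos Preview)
Your proposal is correct and matches the paper's own proof essentially verbatim: the paper also just chooses $p=2$ and sets $\rho_\mu=\rho_2$, $C_\mu=C_{2,\mu}$. There is nothing to add.
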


\begin{proof}
Immediate upon e.g.\ choosing
$p=2$ and setting $C_{\mu} = C_{2,\mu}$ and $\rho_\mu = \rho_2$
for each probability measure $\mu \in \lpp$.
\end{proof}

\begin{proposition}\label{proof:iv->v}
\pref{eq-gefslp} $\implies$ \pref{eq-ssge}.
\end{proposition}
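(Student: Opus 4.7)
The plan is to observe that the measure $\pi_S$ associated to any small set $S$ automatically lies in $L^p(\pi)$ for every $p\in(1,\infty)$, so we can simply instantiate \pref{eq-gefslp} at $\mu = \pi_S$.

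First I would invoke Lemma~\ref{lem-small_sets_exist} (with $A = \X$, noting $\pi(\X) = 1 > 0$) to produce a small set $S \in \F$. Since $\pi(S) > 0$, the probability measure $\pi_S$ is well defined and absolutely continuous with respect to $\pi$, with Radon--Nikodym derivative
\[
\frac{d\pi_S}{d\pi}(x) \ = \ \frac{\one_S(x)}{\pi(S)}.
\]

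Next I would verify that $\pi_S \in L^p(\pi)$ for the $p \in (1,\infty)$ supplied by \pref{eq-gefslp}. A direct computation gives
\[
\norm{\pi_S}_{L^p(\pi)}^p \ = \ \int_\X \left(\frac{\one_S(x)}{\pi(S)}\right)^p \pi(dx) \ = \ \frac{\pi(S)}{\pi(S)^p} \ = \ \pi(S)^{1-p} \ < \ \infty,
\]
so $\pi_S$ is indeed an $L^p(\pi)$ probability measure.

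Finally, applying hypothesis \pref{eq-gefslp} to $\mu = \pi_S$ yields constants $\rho_{\pi_S} < 1$ and $C_{\pi_S} < \infty$ such that
\[
\tvnorm{\pi_S P^n(\cdot) - \pi(\cdot)} \ \leq \ C_{\pi_S} \, \rho_{\pi_S}^n \qquad \text{for all } n\in\N,
\]
and setting $C_S = C_{\pi_S}$ and $\rho_S = \rho_{\pi_S}$ gives exactly \pref{eq-ssge}. There is no real obstacle here: the entire argument reduces to noticing that $d\pi_S/d\pi$ is a bounded function on a set of positive $\pi$-measure, hence in $L^p(\pi)$ for every $p$.
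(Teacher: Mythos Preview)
Your proof is correct and follows essentially the same approach as the paper: invoke Lemma~\ref{lem-small_sets_exist} to obtain a small set $S$, compute $\frac{d\pi_S}{d\pi} = \one_S/\pi(S)$, and verify that its $p$-th moment equals $\pi(S)^{1-p} < \infty$, so that \pref{eq-gefslp} applies directly to $\mu = \pi_S$.
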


\begin{proof}
By Lemma \ref{lem-small_sets_exist}, there exists a
small set $S \subset \X$. Since
    by assumption $P$ is geometrically ergodic starting from all probability
    measures in $L^p(\pi)$ it suffices to show that $\pi_S \in L^p(\pi)$. Now
    for any measurable $A \subset \X$ we have,
    \begin{align*}
        \pi_S(A)
        = \dfrac{\pi(S \cap A)}{\pi(S)}
        = \frac{1}{\pi(S)}\int_{A}\one_{S}d\pi
    \end{align*}
    which implies $\frac{d\pi_S}{d\pi} = \one_{S}/\pi(S)$. Thus
    \begin{align*}
        \int_{\X}\left|\frac{d\pi_S}{d\pi}\right|^pd\pi
        = \int_{\X}\frac{1}{\pi(S)^p}\one_{S}d\pi
        = \frac{1}{\pi(S)^{p-1}}
        < \infty
    \end{align*}
\end{proof}

\begin{proposition}\label{proof:v->i}
\pref{eq-ssge} $\implies$ \pref{eq-geometric-erg}.
\end{proposition}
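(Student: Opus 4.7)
The strategy is a coupling argument: for $\pi$-a.e.\ starting point $x$, we construct a coupling of the chain started at $x$ with a copy started from $\pi_S$, so that the coalescence time has a geometric tail with rate independent of $x$. Condition~\pref{eq-ssge} then transfers to individual starting points via the triangle inequality.

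First, I would invoke the small-set property of $S$ to obtain $m_0\in\N$, $\delta>0$, and a probability measure $\hat\nu$ with $P^{m_0}(y,\cdot)\geq \delta\,\hat\nu(\cdot)$ for all $y\in S$. By passing to the $m_0$-skeleton (this multiplies rates but preserves geometric ergodicity), I will reduce to the $1$-small case $P(y,\cdot)\geq\delta\hat\nu$ on $S$. Next, for $\pi$-a.e.\ $x$, I would construct a joint Markovian coupling $(\Phi_n,\Phi^*_n)$ with $\Phi_0=x$ and $\Phi^*_0\sim\pi_S$: whenever both coordinates lie in $S$, at the next step the common $\delta\hat\nu$ component is used to coalesce with probability $\delta$; otherwise the coordinates are drawn from any Markovian coupling (e.g.\ independent copies of the residual kernels). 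Once coalesced, the two chains move together forever. Letting $\tau$ denote the coalescence time, the coupling inequality gives
\[
\tvnorm{P^n(x,\cdot) - \pi_S P^n(\cdot)} \ \leq \ \P(\tau>n),
\]
and combining with \pref{eq-ssge} yields
\[
\tvnorm{P^n(x,\cdot) - \pi(\cdot)} \ \leq \ \P(\tau>n) \, + \, C_S\,\rho_S^n.
\]

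The main obstacle is establishing that $\P(\tau>n)\leq C'_x\,\bar\rho^n$ for some $\bar\rho<1$ \emph{independent of $x$} (with only the prefactor $C'_x$ depending on $x$). This requires bounding the simultaneous return-time tails to $S$ for both chains. For the $\pi_S$-started copy, \pref{eq-ssge} already forces $\pi_S P^n(S)\to\pi(S)$ geometrically, giving a uniform lower bound on the probability of being in $S$ at each step (for $n$ large). For the $x$-started copy, $\phi$-irreducibility plus aperiodicity together with the small-set minorization guarantee that the chain reaches $S$ from $\pi$-a.e.\ $x$ in finite expected time, after which the same $\pi_S$-based estimate applies to the sub-chain. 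A Borel--Cantelli / geometric-trials computation then shows that the number of successful coupling attempts grows linearly in $n$, producing the desired geometric tail for $\tau$.

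Finally, setting $\rho := \max(\bar\rho,\rho_S)<1$ and $C_x := C'_x+C_S<\infty$ (finite for $\pi$-a.e.\ $x$ by Lemma~\ref{lem-total_variation_is_measurable} combined with the construction of $C'_x$), the displayed bound above yields $\tvnorm{P^n(x,\cdot)-\pi}\leq C_x\,\rho^n$ for all $n\in\N$, which is condition~\pref{eq-geometric-erg}. Alternatively, since this implication was established on countable state spaces by \cite{vere-jones1962} and extended to general state spaces by \cite[Theorem~1]{nummelinTweedie:1978}, one could simply appeal to that reference; my plan above mirrors the coupling proof underlying those results.
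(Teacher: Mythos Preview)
The paper's proof is simply the citation to \cite[Theorem~1]{nummelinTweedie:1978} (following \cite{vere-jones1962}), which you yourself include as your ``alternative'' at the end. In that sense your proposal contains the paper's proof verbatim.

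Your coupling sketch, however, is additional content and has a genuine gap at the point you flag as the ``main obstacle''. The assertion that $\phi$-irreducibility and aperiodicity guarantee the $x$-started chain reaches $S$ in \emph{finite expected time} is both unjustified and, more importantly, insufficient: finite expectation does not give geometric tails, and you need geometric tails on the return times to $S\times S$ for the joint chain in order to conclude that $\P(\tau>n)$ decays geometrically with a rate independent of $x$. The substantive step that is missing is the deduction, from \pref{eq-ssge} alone, that the return time to $S$ from within $S$ has an exponential moment (equivalently, geometric tails). This is exactly where the real work lies in the Nummelin--Tweedie argument, and it is handled there via Kendall's renewal theorem applied to the split-chain regeneration sequence, not by a direct Borel--Cantelli/geometric-trials count. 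Without that ingredient your coupling argument is circular: bounding simultaneous return times geometrically is essentially equivalent to the geometric ergodicity you are trying to prove. So either invoke Kendall's theorem explicitly at that step, or---as the paper does---simply cite \cite[Theorem~1]{nummelinTweedie:1978}.
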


\begin{proof}
This is the result of \cite[Theorem~1]{nummelinTweedie:1978},
which generalizes the countable state space result of \cite{vere-jones1962}.
\end{proof}

\begin{proposition}\label{proof:i->ii}
\pref{eq-geometric-erg} $\implies$ \pref{eq-weak-geometric-erg}.
\end{proposition}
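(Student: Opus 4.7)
The implication is essentially trivial, but there is a small measurability subtlety to dispatch. The plan is to simply take $A$ to be the set of $x$ on which the geometric bound of \pref{eq-geometric-erg} holds, then verify that this $A$ lies in $\F$ and has $\pi(A)>0$ (in fact $\pi(A)=1$), and finally set $\rho_x := \rho$ (the uniform rate from \pref{eq-geometric-erg}) for each $x\in A$, keeping the same $C_x$.

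Concretely, by \pref{eq-geometric-erg}, fix the uniform $\rho<1$ and define
\[
A \ := \ \Big\{ x \in \X \ : \ \sup_{n\in\N} \rho^{-n} \, \tvnorm{P^n(x,\cdot) - \pi(\cdot)} < \infty \Big\}.
\]
This is the set of $x$ for which a finite constant $C_x$ with the required bound exists. By Lemma~\ref{lem-total_variation_is_measurable}, each function $x \mapsto \rho^{-n} F_n(x) = \rho^{-n} \tvnorm{P^n(x,\cdot)-\pi(\cdot)}$ is measurable, so $\sup_{n} \rho^{-n} F_n$ is measurable as a countable supremum of measurable functions, and hence $A \in \F$. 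By the hypothesis of \pref{eq-geometric-erg}, the complement $A^c$ is $\pi$-null, so $\pi(A) = 1 > 0$.

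Then for each $x \in A$, the choice $\rho_x := \rho < 1$ and $C_x$ as in \pref{eq-geometric-erg} immediately gives
\[
\tvnorm{P^n(x,\cdot) - \pi(\cdot)} \ \le \ C_x \, \rho_x^n \qquad \text{for all } n \in \N,
\]
which is exactly \pref{eq-weak-geometric-erg}. There is no real obstacle beyond the measurability check, which is handled by Lemma~\ref{lem-total_variation_is_measurable}; the rest is just observing that a uniform rate (as in (i)) is a fortiori a pointwise-dependent rate (as in (ii)), and that a set of full $\pi$-measure has strictly positive $\pi$-measure.
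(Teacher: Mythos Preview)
Your proof is correct and follows essentially the same idea as the paper's, which simply writes ``Immediate upon choosing $A=\X$, and $\rho_x=\rho$ for all $x\in\X$.'' In fact your version is more careful: the paper's choice $A=\X$ is slightly imprecise since \pref{eq-geometric-erg} only asserts the bound for $\pi$-a.e.\ $x$, whereas you explicitly construct the full-measure set $A$ and verify via Lemma~\ref{lem-total_variation_is_measurable} that it lies in $\F$.
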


\begin{proof}
Immediate upon choosing $A=\X$, and $\rho_x=\rho$ for all $x\in\X$.
\end{proof}

\begin{proposition}\label{proof:ii->vsup}
    \pref{eq-weak-geometric-erg} $\implies$ \pref{eq-ssgesup}.
\end{proposition}

\begin{proof}
    Let $A\in\F$ with $\pi(A) > 0$ and
    $\tvnorm{P^n(x,\cdot) - \pi(\cdot)} \ \leq \ C_x \, \rho_x^n$
for all $x\in A$ and $n\in\N$.
For each $n \in \N$,
    let $D_n : A \to [0,\infty)$ by $D_n(x) = \tvnorm{P^n(x,\cdot) -
    \pi(\cdot)}$. Then each $D_n$ is measurable by Lemma
    \ref{lem-total_variation_is_measurable}, hence so are
the functions $r,s,M : A \to [0,\infty]$ defined by
    \begin{equation*}
        r(x) \, = \, \limsup_{n \to \infty} \, [D_n(x)^{1/n}], \quad
        s(x) \, = \, [r(x) + 1]/2, \quad
        M(x) \, = \, \sup_{n} \, [D_n(x)/s(x)^n].
    \end{equation*}
In particular, for each $n\in\N$, we have
$M(x) \ge D_n(x)/s(x)^n$, hence $D_n(x) \le M(x) \, s(x)^n$.

Next, note that \pref{eq-weak-geometric-erg} says that
for each $x\in A$,
$D_n(x) \le C_x \, \rho_x^n$,
so $r(x) \le \limsup_{n\to\infty} [C_x \, \rho_x^n]^{1/n} = \rho_x < 1$.
Hence $r(x) < s(x) < 1$.
In particular,
$\limsup_{n \to \infty} \, [D_n(x)^{1/n}] < s(x)$.
Hence, there is $N(x)\in\N$ such that
for all $n > N(x)$ we have
$D_n(x)^{1/n} < s(x)$, i.e.\ $D_n(x)/s(x)^n < 1$.
Then
$$
M(x) \ \le \
\max\left[ D_1(x)/s(x)^1, \  D_2(x)/s(x)^2, \
     \ldots, \  D_{N(x)}(x)/s(x)^{N(x)}, \  1\right]
\ < \ \infty
\, .
$$

Now, since $s$ and $M$ are measurable, so are the nested subsets
    \begin{align*}
        B_k \ &:= \ \{x \in A :
s(x) \leq 1 - \frac{1}{k}, \ M(x) \leq k\},
        \qquad k \in \N.
    \end{align*}
Since $s(x)<1$ and $M(x)<\infty$ for each $x\in A$, we must have
$\bigcup_k B_k = A$.  Continuity of measures then
    implies that $\lim_{k \to \infty} \pi(B_k) = \pi(A) > 0$, so
    there is $K \in \N$ with $\pi(B_K) > 0$.
By Lemma~\ref{lem-small_sets_exist}, there exists a small set
    $S \subseteq B_K$.
Then for $x\in S$, we have $x \in B_K$, so
$s(x) \le 1 - \frac1K$ and $M(x) \le K$.
It follows that for $x \in S$ and $n \in \N$,
    \begin{align*}
    \tvnorm{P^n(x,\cdot) - \pi(\cdot)} \ = \ D_n(x)
\ \leq \ M(x) \, s(x)^n \ \leq \ K \, \Big(1 - \frac{1}{K}\Big)^n .
    \end{align*}
This establishes \pref{eq-ssgesup}
with $C_S=K$ and $\rho_S=1 - \frac{1}{K}$.
\end{proof}

\begin{proposition}\label{proof:ii->v}
    \pref{eq-ssgesup} $\implies$ \pref{eq-ssge}.
\end{proposition}

\begin{proof}
This follows since
    \begin{align*}
        \tvnorm{\pi_S P^n(\cdot) - \pi(\cdot)}
        \ &= \ \sup_{D}|\pi_S P^n(D) - \pi(D)| \\
	\ &= \ \sup_{D} |\frac{1}{\pi(S)}\int_{S} [P^n(x,D)-\pi(D)] \pi(dx)| \\
        \ &\leq \ \sup_{D}\sup_{x \in S}|P^n(x,D) - \pi(D)| \\
        \ &= \ \sup_{x \in S}\tvnorm{P^n(x,\cdot) - \pi(\cdot)} \\
        \ &\leq \ C_S \, \rho_S^n.
\qedhere
    \end{align*}
\end{proof}

\begin{proposition}\label{proof:v->xxiv}
\pref{eq-ssge} $\implies$ \pref{eq-tau-c}.
\end{proposition}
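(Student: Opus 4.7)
The plan is to exploit the regenerative structure induced by the small set $S$. By the small-set property, there exist $m \in \N$, $\epsilon > 0$, and a probability measure $\nu$ on $\X$ with $P^m(x,\cdot) \ge \epsilon\,\nu(\cdot)$ for all $x \in S$. Passing to the $m$-skeleton $P^m$---which preserves both the stationary distribution $\pi$ and geometric convergence from $\pi_S$ (with rate $\rho_S^m$)---reduces us to the case $m=1$. Then the Nummelin splitting construction on $\X \times \{0,1\}$ yields an augmented chain whose marginals match the original and in which $\alpha := S \times \{1\}$ is a genuine atom: every visit to $\alpha$ is followed by a step distributed exactly as $\nu$. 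Its stationary distribution $\hat\pi$ satisfies $\hat\pi(\alpha) = \epsilon\,\pi(S) > 0$.

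Next, I would transfer the geometric convergence from $\pi_S$ in the original chain into geometric convergence from $\alpha$ in the split chain. Since the projection from the augmented space to $\X$ is measure-preserving, total variation on the augmented space controls its marginal, and $\pi_S$ lifts naturally to a starting distribution putting mass $\epsilon$ on $\alpha$; combining this with \pref{eq-ssge} gives $|\hat P^n(\alpha,\alpha) - \hat\pi(\alpha)| \le C''\,\rho_S^n$ for some $C'' < \infty$. This is precisely the setting of Kendall's renewal theorem: letting $u_n = \hat P^n(\alpha,\alpha)$ and $f_n = \hat\P_\alpha[\tau_\alpha = n]$, the renewal equation $u_n = \sum_{k=1}^n f_k\,u_{n-k}$ (with $u_0 = 1$) gives $U(z) = 1/(1 - F(z))$ for the corresponding generating functions, and geometric decay of $u_n - \hat\pi(\alpha)$ extends $U$, and hence $F$, analytically to a disk of radius strictly greater than $1$. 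This yields $\E_\alpha[\kappa^{\tau_\alpha}] < \infty$ for some $\kappa > 1$.

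Finally, I would translate this bound back to the original chain. Each visit to $\alpha$ in the split chain projects onto a visit to $S$ in the original, so in the canonical coupling $\tau_S \le \tau_\alpha$, which gives $\sup_{x \in S} \E_x[\kappa^{\tau_S}] < \infty$ after undoing the $m$-skeleton (replacing $\kappa$ by $\kappa^{1/m}$ if needed). The main obstacle I expect is the careful bookkeeping within the Nummelin splitting: verifying that the $\pi_S$-averaged convergence bound lifts cleanly to convergence from the atom, and that the resulting return-time bound is uniform over all $x \in S$ (rather than merely $\pi_S$-averaged). Since this equivalence is classical, one may alternatively appeal directly to \cite[Theorem~15.0.1]{MeynTweedie}, which gives exactly this implication.
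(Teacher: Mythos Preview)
Your proposal is correct and aligned with the paper: the paper's own proof is simply a direct citation to \cite[Theorem~15.0.1]{MeynTweedie}, which you also invoke at the end. The sketch you give (pass to the $m$-skeleton, apply the Nummelin split to create an atom, use Kendall's renewal theorem to get an exponential moment for the return time to the atom, then transfer back) is precisely the machinery underlying that theorem in Meyn--Tweedie, so your approach is not genuinely different---it is an unpacking of the cited result. The obstacles you flag (lifting the $\pi_S$-bound to a bound from the atom, and recovering uniformity in $x \in S$ rather than just a $\pi_S$-average) are real and are exactly the points where the full Meyn--Tweedie argument does careful work; your sketch is honest about not resolving them and correctly falls back on the citation.
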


\begin{proof}
This is the content of the ``$(i) \implies (ii)$''
implication of \cite[Theorem~15.0.1]{MeynTweedie}.
\end{proof}

\section{Proofs for $V$-function and $L^\infty_V$ Conditions}
\label{sec-V}

\begin{proposition}\label{proof:choosejone}
    $\pref{eq-vinf-allj} \implies \pref{eq-vinf-somej}$.
\end{proposition}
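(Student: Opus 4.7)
The plan is to observe that conditions \pref{eq-vinf-allj} and \pref{eq-vinf-somej} differ only in the quantification over $j\in\N$: the former asserts the existence of a suitable function $V$ and constants $\rho,C$ for \emph{every} $j\in\N$, while the latter asserts the same data only for \emph{some} $j\in\N$. So the proof is purely a matter of instantiating the universal quantifier.

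Concretely, I would take $j=1$ (any fixed natural number would do). Applying the hypothesis \pref{eq-vinf-allj} at this $j$, I immediately obtain a $\pi$-a.e.-finite measurable function $V:\X\to[1,\infty]$ together with constants $\rho<1$ and $C<\infty$ such that the stated moment condition on $V$ holds and
\[
\norm{P^n - \Pi}_{L^\infty_V} \ \leq \ C\,\rho^n \qquad \text{for all } n\in\N.
\]
These data witness \pref{eq-vinf-somej} with this particular $j=1$ serving as the ``there exists $j\in\N$''.

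I expect no genuine obstacle, since the argument is a one-line quantifier specialization. The only point worth double-checking is the moment condition: the hypothesis gives $\pi(V^1)=\pi(V)<\infty$, and by Lemma~\ref{lem-P-Linf}(3) this is consistent with (and in fact equivalent to) the ``$\pi(V^j)<\infty$ for some $j\in\N$'' required by the conclusion when we take $j=1$.
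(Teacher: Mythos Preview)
Your proof is correct and takes exactly the same approach as the paper, which simply says ``Immediate (just choose $j=1$).'' Your additional remarks about the moment condition and Lemma~\ref{lem-P-Linf}(3) are harmless but unnecessary, since with $j=1$ the required condition $\pi(V^j)<\infty$ is literally $\pi(V)<\infty$.
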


\begin{proof}
Immediate (just choose $j=1$).
\end{proof}

\begin{proposition}\label{proof:viii->xiv}
    $\pref{eq-vuex-allj} \implies \pref{eq-vinf-allj}$.
\end{proposition}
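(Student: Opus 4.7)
The plan is to transfer the pointwise $V$-uniform ergodicity bound in \pref{eq-vuex-allj} directly into an operator norm bound on $L^\infty_V$. Fix $j\in\N$. By \pref{eq-vuex-allj} applied to this $j$, there exists a $\pi$-a.e.-finite measurable $V:\X\to[1,\infty]$ with $\pi(V^j)<\infty$, together with constants $\rho<1$ and $C<\infty$, such that $\sup_{|f|\le V}|P^nf(x)-\pi(f)|\le C\,V(x)\,\rho^n$ for all $x\in\X$ and $n\in\N$.

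Next, I would take an arbitrary $f\in L^\infty_V$ with $|f|_V\le 1$. By Lemma~\ref{lem-P-Linf}(1) this is equivalent to $|f(x)|\le V(x)$ for all $x\in\X$, so the sup over $\{|f|\le V\}$ in \pref{eq-vuex-allj} covers every admissible $f$. Observing that $\Pi f(x)=\pi(f)$ for all $x$, we have
\[
|(P^n-\Pi)f(x)| \ = \ |P^nf(x)-\pi(f)| \ \le \ C\,V(x)\,\rho^n.
\]
Dividing by $V(x)$ (which is positive since $V\ge 1$) and taking the supremum over $x\in\X$ gives $|(P^n-\Pi)f|_V\le C\,\rho^n$. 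Finally, taking the supremum over all $f$ with $|f|_V\le 1$ yields
\[
\|P^n-\Pi\|_{L^\infty_V} \ \le \ C\,\rho^n
\]
for all $n\in\N$, which is the bound required by \pref{eq-vinf-allj}. The moment condition transfers as well: since $\pi(V^j)<\infty$, Lemma~\ref{lem-P-Linf}(3) gives $\pi(V)<\infty$, meeting the finiteness requirement stated in \pref{eq-vinf-allj}.

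There is no real obstacle here: the argument is essentially just the observation that $\Pi f = \pi(f)$ is a constant, so the pointwise bound in \pref{eq-vuex-allj} is already the $V$-norm of $(P^n-\Pi)f$ evaluated at $x$, and dualizing via $|\cdot|_V$ gives the operator norm bound immediately.
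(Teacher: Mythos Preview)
Your proof is correct and follows essentially the same approach as the paper: both arguments fix $j$, invoke \pref{eq-vuex-allj} to get $V$, translate the pointwise bound $|P^nf(x)-\pi(f)|\le C\,V(x)\,\rho^n$ for $|f|\le V$ into $|(P^n-\Pi)f|_V\le C\rho^n$, and then take the supremum over $f$ to obtain $\|P^n-\Pi\|_{L^\infty_V}\le C\rho^n$. Your explicit mention of Lemma~\ref{lem-P-Linf}(3) to handle the moment condition is a minor addition the paper leaves implicit.
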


\begin{proof}
    Let $f \in L^\infty_V$ such that $|f|_V =
    1$. Then, $|f| \leq V$, and, if $\pref{eq-vuex-allj}$ holds, then
for each $x \in \X$ and each $n\in\N$,
    \begin{align*}
        |P^nf(x) - \Pi(f)(x)| = |P^nf(x) - \pi(f)|
        &\leq \sup\limits_{|f|\leq V} |P^n(x,f) - \pi(f)|
        \leq CV(x)\rho^n,
    \end{align*}
    which implies
    \begin{align*}
        |(P^n - \Pi)f|_V
        &= \sup_{x \in \X}\frac{|P^n f(x) - \Pi(f)(x)|}{V(x)}
        = \sup_{x \in \X}\frac{|P^n f(x) - \pi(f)|}{V(x)}
        \leq C\rho^n,
    \end{align*}
    and therefore,
    \begin{align*}
        \norm{P^n - \Pi}_{L^\infty_V}
        &= \sup_{\substack{f \in L^\infty_V \\ |f|_V = 1}}|(P^n - \Pi)f|_V
        \leq C\rho^n.
\qedhere
    \end{align*}
\end{proof}

\begin{proposition}\label{proof:xiv<->xv}
    \pref{eq-vinf-somej} $\Leftrightarrow$ \pref{eq-vinf0-somej}, \and
    \pref{eq-vinf-allj} $\Leftrightarrow$ \pref{eq-vinf0-allj}.
\end{proposition}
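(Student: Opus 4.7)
The plan is to exploit the direct-sum decomposition $\lv = \lvz \oplus \R$ already invoked in the proof of Lemma~\ref{lem-P-Linf}(2): every $f \in \lv$ splits uniquely as $f = f_0 + c$ with $c = \pi(f)$ and $f_0 = f - c \in \lvz$. Under this splitting, since $P^n$ preserves constants and $\Pi f = \pi(f) \, \one = c$, we have
\[
(P^n - \Pi) f \ = \ P^n f_0 + c - c \ = \ P^n f_0 .
\]
This identity is what links $\norm{P^n - \Pi}_{\lv}$ to $\norm{P^n}_{\lvz}$; both directions of the equivalence then follow by taking $V$-norms and keeping track of how $|f_0|_V$ compares to $|f|_V$.

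For the forward direction $\pref{eq-vinf-somej} \Rightarrow \pref{eq-vinf0-somej}$, I would use the same $V$. Any $f \in \lvz$ satisfies $\pi(f) = 0$, so $\Pi f = 0$ and $P^n f = (P^n - \Pi) f$; taking $V$-norms yields $|P^n f|_V \leq \norm{P^n - \Pi}_{\lv} \, |f|_V \leq C \rho^n \, |f|_V$, hence $\norm{P^n}_{\lvz} \leq C \rho^n$ for all $n \in \N$. The same $V$ serves both conditions, so this simultaneously handles the allj case.

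For the reverse direction $\pref{eq-vinf0-somej} \Rightarrow \pref{eq-vinf-somej}$, again take the same $V$. For $f \in \lv$ with $|f|_V = 1$, write $f = f_0 + c$ as above. Since $V \ge 1$, we have $|f_0(x)| \leq |f(x)| + |c| \leq V(x) + |c| \leq (1+|c|)\,V(x)$, so $|f_0|_V \leq 1 + |c|$. The only subtlety is controlling $|c|$: from $|c| = |\pi(f)| \leq \pi(|f|) \leq \pi(V)$, we invoke Lemma~\ref{lem-P-Linf}(3) to deduce $\pi(V) < \infty$ from $\pi(V^j) < \infty$. Then, using the identity above,
\[
|(P^n - \Pi) f|_V \ = \ |P^n f_0|_V \ \leq \ \norm{P^n}_{\lvz} \, |f_0|_V
\ \leq \ C \rho^n \, (1 + \pi(V)) ,
\]
so $\norm{P^n - \Pi}_{\lv} \leq C \, (1 + \pi(V)) \, \rho^n$, giving \pref{eq-vinf-somej} with new constant $C' = C\,(1+\pi(V))$. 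Running this for every $j \in \N$ establishes the allj equivalence $\pref{eq-vinf-allj} \Leftrightarrow \pref{eq-vinf0-allj}$. The only potential obstacle is absorbing the scalar $|c|$ into a uniform constant in the reverse direction, but this is handled precisely by the finite-moment assumption $\pi(V^j) < \infty$, which (via Lemma~\ref{lem-P-Linf}(3)) forces $\pi(V) < \infty$.
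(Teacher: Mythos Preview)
Your proposal is correct and essentially identical to the paper's proof: both directions rest on the identity $(P^n-\Pi)f = P^n(f-\pi(f))$ together with the bound $|f-\pi(f)|_V \le 1+\pi(V)$, and you invoke Lemma~\ref{lem-P-Linf}(3) to secure $\pi(V)<\infty$ exactly where the paper implicitly relies on it.
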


\begin{proof}
($\Rightarrow$) Let $n \in \N$. Given that $\lvz \subseteq \lv$,     
$\norm{P^{n}}_{\lvz} \leq \norm{P^{n} - \Pi}_{\lv} \leq C \, \rho^n$.

($\Leftarrow$) If $f\in \lv$ such that $|f|_{V} = 1$, we have
\begin{align*}
|(P^{n} - \Pi)f|_{V} &= |(P^{n} - \one_{\X}\otimes P^{n} \pi)f|_{V} \\
        &= |P^{n}f - (P^{n} \pi) f|_{V} \\
        &= |P^{n}(f -\pi (f))|_{V}\\
        &\leq |P^{n}(f -\pi (f))|_{V}\\
\mbox{($f -\pi(f) \in \lvz$)} \ \
		&\leq \norm{P^{n}}_{\lvz} \, |f -\pi (f))|_{V}\\
        &\leq \norm{P^{n}}_{\lvz} \, (|f|_{V} +|\pi (f)|_{V})\\
        &\leq C \, \rho^{n} \, (1 +\pi (V))\\
        &\leq C' \, \rho^{n},
\end{align*}
where $C' = C(1 + \pi(V)) < \infty$.
\end{proof}

\begin{proposition}\label{proof:xiii->xii}
\pref{eq-voinf-somej} $\Leftrightarrow$ \pref{eq-nlinf-p-pi-somej}, \and
\pref{eq-voinf-allj} $\Leftrightarrow$ \pref{eq-nlinf-p-pi-allj}.
\end{proposition}

\begin{proof}
    If $\norm{P^m - \Pi}_{L^\infty_V} < 1$, then for
    $f \in L^\infty_{V,0}$,
    \begin{align*}
        \norm{P^m}_{L^\infty_{V,0}} = \sup_{\substack{f \in L^\infty_{V,0} \\
        |f|_V = 1}}|P^mf|_V
        = \sup_{\substack{f \in L^\infty_{V,0} \\|f|_V = 1}}|(P^m - \Pi)f|_V \leq \sup_{\substack{f \in L^\infty_V \\|f|_V = 1}}|(P^m - \Pi)f|_V
        = \norm{P^m - \Pi}_{L^\infty_V} < 1,
    \end{align*}
    so that also $\norm{P^m}_{L^\infty_{V,0}} < 1$.

Conversely, if $s = \norm{P^{m}}_{\lvz} < 1$,
then for $f$ with $|f|_{V}\leq 1$ and $k\in \N$,
\begin{align*}
    |(P^{mk}-\Pi)f|_{V}
&= |P^{mk}(f-\pi(f))|_{V} \\
    &\leq  \norm{P^{mk}}_{\lvz}|f-\pi (f)|_{V}\\
    &=  \norm{(P^{m})^{k}}_{\lvz}|f-\pi (f)|_{V}\\
    &\leq s^{k}|f-\pi(f)|_{V}\\
    &\leq s^{k}(|f|_{V} + |\pi(f)|_{V})\\
    &\leq s^{k}(1 + \pi(V)).
\end{align*}
From Lemma~\ref{lem-P-Linf}, we must have $\pi (V) < \infty$.
Hence, there exists $n\in \N$ such that $s^{n}(1+\pi (V)) < 1$.
Thus, taking $m^{*} = mn$, we have that, for each $|f|_{V}\leq 1$,
\[
|(P^{m^{*}}-\Pi)f|_{V} < 1
\]
and therefore, $\norm{P^{m^{*}} - \Pi}_{\lv} < 1$.
\end{proof}

\begin{proposition}\label{proof:vi'<->vi}
\pref{eq-dc-nomom} $\implies$ \pref{eq-dc-allj}.
\end{proposition}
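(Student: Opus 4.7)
The plan is to construct, for each $j \in \N$, a new drift function by applying a concave power to the original $V$. Specifically, I would take $V_j := V^{1/j}$. This is motivated by the fact that $\pi(V_j^j) = \pi(V)$, and Lemma~\ref{lem-P-Linf}(4) already guarantees $\pi(V) < \infty$ under the hypothesis of \pref{eq-dc-nomom}, so the required moment bound is automatic. It is also clear that $V_j$ is measurable, $\pi$-a.e.-finite, and takes values in $[1,\infty]$ (since $V \ge 1$ and $t \mapsto t^{1/j}$ is continuous on $[1,\infty]$).

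The main step is to verify the drift condition for $V_j$. By Jensen's inequality applied to the probability measure $P(x,\cdot)$ and the concave function $t \mapsto t^{1/j}$, one has
\[
PV_j(x) \ = \ \int V(y)^{1/j}\,P(x,dy) \ \le \ \Big(\int V(y)\,P(x,dy)\Big)^{1/j} \ = \ (PV(x))^{1/j}.
\]
Combining this with the drift condition $PV(x) \le \lambda V(x) + b\,\one_S(x)$ and the elementary subadditivity inequality $(a+b)^{1/j} \le a^{1/j} + b^{1/j}$ valid for $a,b\ge 0$ and $j \ge 1$ (a direct consequence of concavity of $t \mapsto t^{1/j}$ on $[0,\infty)$ with value $0$ at $0$), I get
\[
PV_j(x) \ \le \ \big(\lambda V(x) + b\,\one_S(x)\big)^{1/j} \ \le \ \lambda^{1/j}\,V_j(x) + b^{1/j}\,\one_S(x).
\]
Setting $\lambda_j := \lambda^{1/j} < 1$ and $b_j := b^{1/j} < \infty$ then gives exactly a drift condition of the form required by \pref{eq-dc-allj}, on the same small set $S$.

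With both the drift condition and the moment bound verified, the proof is complete. There is really no significant obstacle here: the ingredients are just Jensen's inequality, the subadditivity of $t \mapsto t^{1/j}$, and Lemma~\ref{lem-P-Linf}(4). The concave-power trick simultaneously preserves the drift structure (turning $\lambda$ into $\lambda^{1/j}$, still strictly less than $1$) and flattens the tails enough to reduce the stationary mean of $V_j^j$ back to the already-finite $\pi(V)$.
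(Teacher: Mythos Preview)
Your proof is correct and essentially identical to the paper's own argument: both define the new drift function as $V^{1/j}$, invoke Lemma~\ref{lem-P-Linf} for $\pi(V)<\infty$, apply Jensen's inequality for the concave map $t\mapsto t^{1/j}$, and then use subadditivity to split $(\lambda V + b\,\one_S)^{1/j}$. You are simply more explicit about the subadditivity step, which the paper folds into the word ``concavity''.
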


\begin{proof}
First of all, we must have $\pi(V)<\infty$
by Lemma~\ref{lem-P-Linf}.
Then, given $j\in \N$, let $\widehat{V} = V^{1/j}$, so
$\pi(\hat{V}^{j}) = \pi(V) < \infty$.
It follows from Jensen's inequality and concavity that
\begin{align*}
    P \, \widehat{V} \ \leq \ (PV)^{1/j} \ \leq \ (\lambda V + b\one_{S})^{1/j}
\ \leq \ \hat{\lambda} \, \hat{V} + \hat{b} \, \one_{S},
\end{align*}
with $\hat{\lambda}=\lambda^{1/j} < 1$ and $\hat{b}=b^{1/j} < \infty$,
thus showing \pref{eq-dc-nomom}.
\end{proof}

\begin{proposition}\label{proof:6.7a}
\pref{eq-nlinf-p-pi-somej} $\Leftrightarrow$ \pref{eq-vinf-somej},
\and
\pref{eq-nlinf-p-pi-allj} $\Leftrightarrow$ \pref{eq-vinf-allj}.
\end{proposition}

\begin{proof}
Suppose first that $s=\norm{P^{m} - \Pi}_{L^{\infty}_{V}} < 1$
for some $m\in\N$.
Let $\alpha = \norm{P - \Pi}_{L^{\infty}_{V}}$,
and let $n\in \N$. If $n\leq m$, we have that
    \begin{align*}
        \norm{P^{n} - \Pi}_{L^{\infty}_{V}} = \norm{(P - \Pi)^{n}}_{L^{\infty}_{V}} 
        &\leq \alpha^{n} \leq \alpha^{n}s^{-1}(s^{1/m})^{n}.
    \end{align*}
If $n > m$, then $n = mt + \ell$, for some $t\in\N$ and $0\leq \ell<m$, and hence
    \begin{align*}
        \norm{P^{n} - \Pi}_{\lv} 
            &= \norm{(P - \Pi)^{mt+\ell}}_{\lv}
            \leq \alpha^{l}\norm{(P - \Pi)^{mt}}_{\lv} \\
            &= \alpha^{l} \norm{(P^{m} - \Pi)^{t}}_{\lv}
            \leq \alpha^{l} s^{t}
            \leq \alpha^{l} s^{-1} (s^{1/m})^{n}.
    \end{align*}
So, taking $C = \max\limits_{1\leq r\leq m} \alpha^{r}s^{-1}$
and $\rho = s^{1/m}$, we conclude that for each $n\in\N$,
\[
\norm{P^{n} - \Pi}_{\lv} \leq C\rho^{n}.
\]

Conversely, if 
$\norm{P^{n} - \Pi}_{\lv} \leq C\rho^{n}$ for all $n\in\N$, then
we can simply choose a large enough~$m\in\N$ that $C \rho^m < 1$, to
obtain that
$\norm{P^{m} - \Pi}_{L^{\infty}_{V}} \le C \rho^m < 1$.
\end{proof}

\begin{proposition}\label{proof:6.7b}
\pref{eq-vinf-somej} $\implies$ \pref{eq-vuex-nomom}.
\end{proposition}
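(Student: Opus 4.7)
The plan is to recognize that this implication amounts to translating the operator-norm bound of \pref{eq-vinf-somej} directly into the pointwise $V$-uniform ergodicity condition of \pref{eq-vuex-nomom}. I would use the same measurable function $V$, together with the same constants $C$ and $\rho$, that appear in \pref{eq-vinf-somej}.

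First I would fix any measurable $f$ with $|f|\leq V$. By Lemma~\ref{lem-P-Linf}(1) this is equivalent to $|f|_V \leq 1$. Since $\pi(V^j)<\infty$ entails $\pi(V)<\infty$ by Lemma~\ref{lem-P-Linf}(3), we have $\pi(|f|) \leq \pi(V) <\infty$, so $\pi(f)$ is well-defined and finite. The hypothesis together with the definition of the operator norm then gives
\begin{align*}
|(P^n - \Pi)f|_V \ \leq \ \norm{P^n - \Pi}_{L^\infty_V} \, |f|_V \ \leq \ C \, \rho^n.
\end{align*}
Unpacking the definition of $|\cdot|_V$ and noting that $\Pi f = \pi(f)$ acts as the constant function, this becomes
\begin{align*}
\sup_{x\in\X} \frac{|P^n f(x) - \pi(f)|}{V(x)} \ \leq \ C \, \rho^n,
\end{align*}
hence $|P^n f(x) - \pi(f)| \leq C \, V(x) \, \rho^n$ for every $x \in \X$ and every $n\in\N$.

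Because this bound is uniform over all $f$ with $|f|\leq V$, taking the supremum on the left-hand side produces precisely the inequality in \pref{eq-vuex-nomom}. I expect no step of this argument to pose a genuine obstacle: the implication is essentially a tautology, reflecting that the $L^\infty_V$ operator norm is defined exactly so as to translate into pointwise bounds carrying a factor of $V(x)$ on the right. The only mild subtlety is ensuring that $\pi(f)$ exists so that the signed measure $\mu P^n - \pi$ can be spoken of meaningfully, and this is handled by the moment bound $\pi(V)<\infty$ mentioned above.
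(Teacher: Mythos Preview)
Your proposal is correct and follows essentially the same route as the paper's own proof: translate the operator-norm bound $\norm{P^n-\Pi}_{L^\infty_V}\le C\rho^n$ into the pointwise inequality $|P^nf(x)-\pi(f)|\le C\,V(x)\,\rho^n$ via Lemma~\ref{lem-P-Linf}(1), then take the supremum over $|f|\le V$. Your extra remark that $\pi(V)<\infty$ guarantees $\pi(f)$ is well-defined is a welcome clarification the paper omits.
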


\begin{proof}
Since $\norm{P^{n} - \Pi}_{\lv} \leq C\rho^{n}$,
we have for each $n\in \N$ and $|f|_{V}\leq 1$ that
\[
|(P^{n} - \Pi ) f|_{V} \leq \norm{P^{n} - \Pi}_{\lv}|f|_{V} \leq C\rho^{n}|f|_{V} \leq C\rho^{n}.
\]
Hence, for each $n\in \N$, $|f|_{V} \leq 1$ and $x\in \X$,
\[
\dfrac{|P^{n}f(x) -  \pi (f)|}{V(x)} = \dfrac{|P^{n}f(x) -  \Pi (f)(x)|}{V(x)} \leq C\rho^{n}.
\]
By Lemma~\ref{lem-P-Linf},
$|f|_{V}\leq 1$ $\Leftrightarrow$ $|f|\leq V$, so
for each $n\in\N$ and $x\in \X$,
\begin{align*}
\sup\limits_{|f|\leq V} |P^{n}f(x) - \pi (f)|
&= \sup\limits_{|f|_V \leq 1} |P^{n}f(x) - \pi (f)|
\leq CV(x)\rho^{n}.
\qedhere
\end{align*}
\end{proof}

\begin{proposition}\label{proof:viii->vii}
\pref{eq-vuex-nomom} $\implies$ \pref{eq-vuemu-nomom},
\and
\pref{eq-vuex-allj} $\implies$ \pref{eq-vuemu-allj}.
\end{proposition}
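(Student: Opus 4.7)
The plan is to derive both implications by integrating the pointwise bound against the initial probability measure $\mu$. Since in each condition the same function $V$ can be used on both sides, I would simply carry over the witness $V$ (and corresponding $\rho,C$) from \pref{eq-vuex-nomom} to \pref{eq-vuemu-nomom}, and likewise from \pref{eq-vuex-allj} to \pref{eq-vuemu-allj}, noting that the assumption $\pi(V^j)<\infty$ is preserved for free.

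The central computation is: for any probability measure $\mu$ on $\X$ with $\mu(V)<\infty$, and any measurable $f$ with $|f|\le V$,
\[
\mu P^n(f) - \pi(f) \ = \ \int_\X \big[P^n f(x) - \pi(f)\big] \, \mu(dx),
\]
after which \pref{eq-vuex-nomom} gives $|P^n f(x) - \pi(f)| \le C\,V(x)\,\rho^n$ pointwise, and integrating yields
\[
\bigl|\mu P^n(f) - \pi(f)\bigr| \ \le \ \int_\X C\,V(x)\,\rho^n \, \mu(dx) \ = \ C\,\mu(V)\,\rho^n.
\]
Taking the supremum over $|f|\le V$ then gives exactly the bound required for \pref{eq-vuemu-nomom}, with the same constants $C$ and $\rho$. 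The proof of \pref{eq-vuex-allj} $\implies$ \pref{eq-vuemu-allj} is verbatim the same, since the $V$ produced by \pref{eq-vuex-allj} already satisfies $\pi(V^j)<\infty$.

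The only step with any subtlety is justifying the Fubini-style swap that gives the displayed identity for $\mu P^n(f) - \pi(f)$. To see that the integrand is absolutely integrable, I would use the fact that $|f|\le V$ implies $|P^n f(x)|\le P^n V(x)$, and then apply \pref{eq-vuex-nomom} to the function $V$ itself to get $P^n V(x) \le \pi(V) + C\,V(x)\,\rho^n$. Since $\pi(V)<\infty$ (this is implicit in \pref{eq-vuex-nomom} because otherwise $\pi(f)$ would not even be defined for $f=V$) and $\mu(V)<\infty$ by hypothesis, Fubini applies and the bound $\int P^n V(x)\,\mu(dx) \le \pi(V) + C\,\mu(V) < \infty$ holds.

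I do not expect any real obstacle here: once Fubini is justified, the argument is essentially a one-line integration of the pointwise $V$-uniform bound against $\mu$, with the constant $C$ and rate $\rho$ transferring unchanged.
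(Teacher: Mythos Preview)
Your proposal is correct and follows essentially the same approach as the paper: integrate the pointwise bound $|P^n f(x)-\pi(f)|\le C\,V(x)\,\rho^n$ against $\mu$ and then take the supremum over $|f|\le V$, using the same $V,C,\rho$ for both implications. The only difference is that you add an explicit Fubini justification via $P^nV(x)\le \pi(V)+C\,V(x)\,\rho^n$, which the paper leaves implicit.
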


\begin{proof}
This follows from the triangle inequality.
If $\mu(V) < \infty$ and $|f|\leq V$, then
\begin{align*}
    |\mu P^{n}f - \pi (f)| = \left| \int_{\X}P^{n}f(y) \mu(dy) - \pi (f) \right| &= \left| \int_{\X}P^{n}f(y) \mu(dy) - \int_{\X} \pi (f) \mu(dy) \right|\\
    &\leq  \int_{\X} \left| P^{n}f(y) - \pi (f) \right| \mu(dy)\\
    &\leq  \int_{\X} \sup\limits_{|f|\leq V} \left| P^{n}f(y) - \pi (f) \right| \mu(dy)\\
    &\le  \int_{\X} C \, V(y) \, \rho^{n} \, \mu(dy)\\
    &=  C \, \mu (V) \, \rho^{n}.
\end{align*}
Hence, $\sup\limits_{|f|\leq V} |\mu P^{n}f - \pi (f)|
\leq C \, \mu(V) \, \rho^{n}$ for all $n\in\N$.
\end{proof}

\begin{proposition}\label{proof:MT15}
\pref{eq-dc-allj} $\implies$ \pref{eq-vuex-allj}.
\end{proposition}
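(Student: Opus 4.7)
The plan is to invoke Theorem~16.0.1 of \cite{MeynTweedie}, which establishes directly that the geometric drift condition $PV \leq \lambda V + b \one_S$ (with $S$ small, $\lambda < 1$, $b < \infty$) on a $\phi$-irreducible aperiodic chain with countably generated state space yields $V$-uniform ergodicity with the \emph{same} function $V$. That is, there exist constants $\rho < 1$ and $C < \infty$, depending only on the drift parameters and the small set $S$, such that
\[
\sup_{|f|\le V} \big| P^n f(x) - \pi(f) \big| \ \le \ C \, V(x) \, \rho^n
\qquad \text{for all } x\in\X, \ n\in\N.
\]

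Given this, the proof is essentially bookkeeping. Fix $j\in\N$. Applying \pref{eq-dc-allj} to this $j$, we obtain a $\pi$-a.e.-finite measurable $V:\X\to[1,\infty]$, a small set $S$, and constants $\lambda<1$ and $b<\infty$ satisfying both the drift inequality and the moment bound $\pi(V^j)<\infty$. Plugging this $V$ into Meyn--Tweedie's Theorem~16.0.1 produces $\rho<1$ and $C<\infty$ with the display above, which is precisely the content of $V$-uniform ergodicity. Since the function $V$ used to witness the ergodicity is literally the same function supplied by the drift condition, its $j$-th stationary moment is still finite. This establishes \pref{eq-vuex-allj} for the given $j$; since $j$ was arbitrary, the implication follows.

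The only minor technical wrinkle is that Meyn--Tweedie's statement is often phrased for everywhere-finite drift functions, whereas here $V$ is only $\pi$-a.e.\ finite; but the set $\{V=\infty\}$ is $\pi$-null and, by $\phi$-irreducibility plus standard arguments, can be absorbed into an exceptional null set without affecting the conclusion (one can, for instance, redefine $V$ on this null set to equal $1$ and observe that the drift inequality and the moment bound are preserved). Apart from this routine adjustment, there is no substantive obstacle: the heavy lifting (constructing couplings through the small set and iterating the drift to get geometric decay in the $V$-norm) is exactly what Theorem~16.0.1 of \cite{MeynTweedie} accomplishes.
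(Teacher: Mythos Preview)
Your proposal is correct and follows essentially the same approach as the paper: invoke a known result from the literature establishing that the geometric drift condition yields $V$-uniform ergodicity with the \emph{same} $V$, and then observe that the moment bound $\pi(V^j)<\infty$ is automatically preserved. The paper cites \cite[Theorem~1]{nummelinTweedie:1978} and \cite[Theorem~15.0.1(iii)]{MeynTweedie} rather than Theorem~16.0.1, but these are closely related results in Meyn--Tweedie and the logical structure is identical; your extra paragraph on the $\pi$-a.e.-finite technicality is a reasonable addition that the paper omits.
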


\begin{proof}
This is the content of \cite[Theorem~1]{nummelinTweedie:1978},
following \cite{vere-jones1962}; proofs also appear in
\cite[Theorem~15.0.1(iii)]{MeynTweedie} and \cite[Theorem~9]{probsurv}.
And since the same function $V$ is used in both conditions,
its moments are preserved.
\end{proof}

\begin{proposition}\label{proof:mutoge}
\pref{eq-vuemu-nomom} $\implies$ \pref{eq-geometric-erg}.
\end{proposition}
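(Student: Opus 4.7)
The plan is to specialize the assumption of \pref{eq-vuemu-nomom} to point-mass initial distributions and then translate from the $V$-sup bound to a total variation bound. Since $V:\X\to[1,\infty]$ is assumed to be $\pi$-a.e.\ finite, the set $\X_0 = \{x\in\X : V(x)<\infty\}$ has $\pi(\X_0)=1$. For any $x\in\X_0$, consider the point mass $\mu = \delta_x$, which is a probability measure on $\X$ with $\mu(V) = V(x) < \infty$. Hence \pref{eq-vuemu-nomom} applies to $\delta_x$.

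Next I would rewrite $\delta_x P^n(f) = \int f(y)\, P^n(x,dy) = P^n f(x)$, so that the assumption yields
\[
\sup_{|f|\leq V}\, \bigl| P^n f(x) - \pi(f) \bigr|
\ \leq \ C\, V(x)\, \rho^n
\qquad \text{for all } n\in\N.
\]
Because $V(x)\geq 1$ everywhere, every measurable $f$ with $|f|\leq 1$ also satisfies $|f|\leq V$, so the supremum on the left dominates the supremum taken over $\{|f|\leq 1\}$.

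Finally, I would invoke the dual representation of total variation distance recalled in Section~\ref{sec-definitions}, namely
\[
\tvnorm{P^n(x,\cdot) - \pi(\cdot)}
\ = \ \tfrac{1}{2}\, \sup_{|f|\leq 1} \bigl| P^n f(x) - \pi(f) \bigr|
\ \leq \ \tfrac{1}{2}\, C\, V(x)\, \rho^n.
\]
Setting $C_x := \tfrac{1}{2}\, C\, V(x) < \infty$ for each $x\in\X_0$, and keeping the same $\rho<1$ from the hypothesis, this gives exactly \pref{eq-geometric-erg} with a constant geometric rate that does not depend on $x$. There is no real obstacle here: the argument is essentially a specialization of the hypothesis to Dirac initial laws combined with the identity $|f|\leq 1 \Rightarrow |f|\leq V$, and the only subtlety is restricting to the $\pi$-full set $\X_0$ on which $V$ is finite, which is all that \pref{eq-geometric-erg} requires.
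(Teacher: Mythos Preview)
Your proof is correct and follows essentially the same approach as the paper: specialize the hypothesis to the point mass $\mu=\delta_x$, use $V\geq 1$ to pass from $\sup_{|f|\leq V}$ to $\sup_{|f|\leq 1}$, and read off the total variation bound. If anything, your version is slightly more careful, since you explicitly restrict to the $\pi$-full set $\{V<\infty\}$ and retain the factor $\tfrac12$ from the dual formula for total variation.
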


\begin{proof}
Let $\mu$ be a point-mass at $x$, so that $\mu(A)=1$ if $x\in A$
otherwise $\mu(A)=0$.  Then $\mu(V) = V(x)$, so
from \pref{eq-geometric-erg},
$$
\tvnorm{P^n(x,\cdot) - \pi(\cdot)}
\ = \ \tvnorm{\mu P^n(\cdot) - \pi(\cdot)}
\ = \ \sup_{|f| \le 1} \big| \mu P^n(f) - \pi(f) \big|
$$
$$
\ \le \ \sup_{|f| \le V} \big| \mu P^n(f) - \pi(f) \big|
\ \le \ C \, \mu(V) \, \rho^n
\ = \ C \, V(x) \, \rho^n
\, .
$$
Hence, \pref{eq-geometric-erg} holds with $C_x = C \, V(x)$.
\end{proof}

\begin{proposition}\label{proof:xxiv->vi}
\pref{eq-tau-c} $\implies$ \pref{eq-dc-nomom}.
\end{proposition}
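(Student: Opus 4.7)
The strategy is the classical Meyn--Tweedie potential construction, turning the return-time moment bound into a Foster--Lyapunov drift. Let $S$ and $\kappa>1$ be as in \pref{eq-tau-c}, and set $M:=\sup_{x\in S}\E_x[\kappa^{\tau_S}]<\infty$. I would define
\[
V(x) \;=\; \begin{cases}\E_x[\kappa^{\tau_S}], & x\notin S,\\ 1, & x\in S,\end{cases}
\]
so that $V:\X\to[1,\infty]$ is measurable (as an increasing limit of the bounded measurable functions $x\mapsto\E_x[\kappa^{\tau_S\wedge N}]$) and $V\ge 1$ everywhere since $\tau_S\ge 1$.

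The computational heart of the argument is a one-step Markov identity obtained by conditioning on $X_1$. On $\{X_1\in S\}$ we have $\tau_S=1$, while on $\{X_1=y\in S^c\}$ the Markov property at time~$1$ gives $\tau_S=1+\sigma$ with $\sigma$ distributed as $\tau_S$ under $P_y$. Summing these contributions yields, for every $x\in\X$,
\[
\E_x[\kappa^{\tau_S}] \;=\; \kappa\,P(x,S) \;+\; \kappa\int_{S^c} V(y)\,P(x,dy).
\]
Solving for $\int_{S^c}V\,dP(x,\cdot)$ and adding $\int_S V\,dP(x,\cdot)=P(x,S)$ (since $V\equiv 1$ on $S$) collapses to the clean formula $PV(x)=\E_x[\kappa^{\tau_S}]/\kappa$, valid for all $x\in\X$.

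From here the drift condition drops out with $\lambda:=1/\kappa<1$. For $x\notin S$, $PV(x)=V(x)/\kappa=\lambda V(x)$ exactly, so the $b\,\one_S$ term is not needed. For $x\in S$, $PV(x)\le M/\kappa=\lambda\cdot 1 + (M-1)/\kappa = \lambda V(x) + b\,\one_S(x)$ with $b:=(M-1)/\kappa<\infty$. Applying Lemma~\ref{lem-P-Linf}~(part~4) to this drift then automatically gives $\pi(V)<\infty$, hence $V$ is $\pi$-a.e.\ finite, and every requirement of \pref{eq-dc-nomom} is satisfied.

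The only genuinely delicate step is justifying the one-step identity when $V$ is permitted to be $\infty$: it must be obtained by monotone convergence on $\kappa^{\tau_S\wedge N}$ (or equivalently by Tonelli on $\sum_{k\ge 1}\kappa^k\,P_x(\tau_S=k)$) before passing to the limit. Everything else is routine algebra once that identity is in hand.
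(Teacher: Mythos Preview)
Your argument is precisely the Meyn--Tweedie construction that the paper defers to by citing \cite[Theorem~15.2.4]{MeynTweedie}, so the approach is the same; you have simply supplied the details the paper leaves to the reference. The one wrinkle is a small circularity at the end: Lemma~\ref{lem-P-Linf}(4), as stated in this paper, already \emph{assumes} $V$ is $\pi$-a.e.\ finite, so you cannot invoke it to deduce that very fact. The fix is immediate: your identity $PV(x)=\kappa^{-1}\E_x[\kappa^{\tau_S}]$ shows $PV(x)<\infty$ whenever $V(x)<\infty$, so the set $\{V<\infty\}$ is absorbing and contains $S$; $\phi$-irreducibility then forces $\pi(\{V=\infty\})=0$ directly, after which every hypothesis of \pref{eq-dc-nomom} is in place.
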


\begin{proof}
The existence of a drift function $V$ satisfying the condition
\pref{eq-dc-nomom} follows from \cite[Theorem 15.2.4]{MeynTweedie}.
\end{proof}

\begin{proposition}\label{proof:vii->iii}
\pref{eq-vuemu-allj} $\implies$ \pref{eq-gefalp}.
\end{proposition}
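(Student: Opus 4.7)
The plan is to use Hölder's inequality to control $\mu(V)$ when $\mu \in L^p(\pi)$, and then convert the $\sup_{|f|\le V}$ bound from \pref{eq-vuemu-allj} into a total variation bound using that $V \ge 1$.

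Fix $p \in (1,\infty)$, and let $q = p/(p-1)$ be its conjugate exponent. Choose any $j \in \N$ with $j \ge q$. By \pref{eq-vuemu-allj} applied to this $j$, there exist a $\pi$-a.e.-finite measurable $V:\X\to[1,\infty]$ with $\pi(V^j)<\infty$, and constants $\rho<1$ and $C<\infty$, such that $\sup_{|f|\le V}|\mu P^n f-\pi(f)|\le C\,\mu(V)\,\rho^n$ for every probability measure $\mu$ with $\mu(V)<\infty$. Since $V\ge 1$, we have $V^q \le V^j$, hence $\pi(V^q) \le \pi(V^j) < \infty$, so $V\in L^q(\pi)$ in the function sense.

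Now let $\mu$ be any probability measure in $L^p(\pi)$. Then $\mu\ll\pi$ with $d\mu/d\pi \in L^p(\pi)$ as a function, and Hölder's inequality gives
\[
\mu(V) \ = \ \int_\X V \, \frac{d\mu}{d\pi} \, d\pi
\ \le \ \Big\|\frac{d\mu}{d\pi}\Big\|_{L^p(\pi)} \|V\|_{L^q(\pi)}
\ = \ \|\mu\|_{L^p(\pi)} \, \pi(V^q)^{1/q}
\ < \ \infty .
\]
Hence $\mu$ satisfies the hypothesis of \pref{eq-vuemu-allj}.

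Finally, since $V\ge 1$, the bound $|f|\le 1$ implies $|f|\le V$, so by the dual characterization of total variation distance recalled in Section~\ref{sec-definitions},
\[
\tvnorm{\mu P^n(\cdot) - \pi(\cdot)}
\ = \ \tfrac{1}{2} \sup_{|f|\le 1} \big|\mu P^n f - \pi(f)\big|
\ \le \ \tfrac{1}{2} \sup_{|f|\le V} \big|\mu P^n f - \pi(f)\big|
\ \le \ \tfrac{1}{2} \, C \, \mu(V) \, \rho^n .
\]
Setting $\rho_p := \rho < 1$ (which depends only on $p$ through the chosen $j$ and the associated $V$) and $C_{p,\mu} := \tfrac{1}{2} C \, \mu(V) < \infty$ yields \pref{eq-gefalp}.

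There is no real obstacle here; the only subtlety is making sure the rate $\rho_p$ depends only on $p$ and not on the specific $\mu\in L^p(\pi)$, which is automatic because $\rho$, $C$, and $V$ in \pref{eq-vuemu-allj} are selected once the integer $j\ge q$ is fixed, before $\mu$ is introduced.
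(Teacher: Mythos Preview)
Your proof is correct and follows essentially the same approach as the paper: fix $p$, choose $j$ large depending on $p$, invoke \pref{eq-vuemu-allj} to obtain $V$ with the right moment, use H\"older to show $\mu(V)<\infty$, and then pass from the $\sup_{|f|\le V}$ bound to total variation via $V\ge 1$. The only cosmetic difference is that the paper applies H\"older with the conjugate pair $(j+1,\,1+\tfrac{1}{j})$ after using $L^p(\pi)\subseteq L^{1+1/j}(\pi)$, whereas you apply it directly with $(p,q)$ after using $V^q\le V^j$; both are equally valid.
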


\begin{proof}
Let $p\in (1,\infty)$, and let $\mu \in \lppi$ be a probability measure.
Let $j\in \N$ be large enough that $1 + \frac{1}{j} < p$,
so that $\mu \in L^{1+\frac{1}{j}}(\pi)$ by Lemma~\ref{prop-Lp}.
Then choose $V$ in \pref{eq-vuemu-allj} such that $\pi(V^{j+1})<\infty$.
Then, using the notation
$$
\|f\|_r \ := \ \left( \int_\X |f|^r \, d\pi \right)^{1/r}
$$
for functions $f:\X\to\R$,
since ${1 \over j+1} + {1 \over 1 + {1 \over j}} = 1$,
we have by H\"{o}lder's inequality that
\begin{align*}
\mu(V)
&\ = \ \int_{\X} V(x) \, \mu(dx)
\ = \ \int_{\X} V(x) \left( \dfrac{d\mu}{d\pi}(x) \right) \pi(dx) \\
&\ \leq \ \norm{V}_{j+1} \, \Big\| \dfrac{d\mu}{d\pi} \Big\|_{1+\frac{1}{j}}
= \ \pi(V^{j+1})^{1/(j+1)} \ \norm{\mu}_{L^{1+\frac{1}{j}}(\pi)}
\ < \ \infty.
\end{align*}
Then,
\begin{align*}
\tvnorm{\mu P^n(\cdot) - \pi(\cdot)}
&\ = \ \dfrac{1}{2} \, \sup_{|f| \leq 1}|\mu P^n(f) - \pi(f)| \\
&\ \leq \ \dfrac{1}{2} \, \sup_{|f| \leq V}|\mu P^n(f) - \pi(f)| \\
&\ \leq \ \dfrac{1}{2} \, C \, \mu(V) \, \rho^n,
\end{align*}
so \pref{eq-gefalp} holds
with $C_{p,\mu} = \dfrac{1}{2} \, C \, \mu(V) < \infty$.
\end{proof}

\section{Proofs for Spectral Conditions}
\label{sec-spectral}

\begin{proposition}\label{proof:gap<->rad}
\pref{eq-sg-inf-somej} $\Leftrightarrow$ \pref{eq-srlinf-p-0-somej},
\and
\pref{eq-sg-inf-allj} $\Leftrightarrow$ \pref{eq-srlinf-p-0-allj}.
\end{proposition}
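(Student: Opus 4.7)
The plan is to derive both implications from the direct-sum decomposition $\lv = \lvz \oplus \R\cdot 1_\X$, under which $P$ splits as $P = P|_\lvz \oplus I_\R$ (the same structure already exploited in the proof of Lemma~\ref{lem-P-Linf}(2)). Applying Lemma~\ref{lem-directsum} yields the pivotal identity
$$\s_\lv(P) \ = \ \s_\lvz(P|_\lvz) \cup \{1\},$$
and both implications then reduce to comparing $\s_\lv(P)\setminus\{1\}$ with $\s_\lvz(P|_\lvz)$; the only subtlety is whether $1$ itself can lie in $\s_\lvz(P|_\lvz)$.

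For the easier direction $\pref{eq-srlinf-p-0-somej} \Rightarrow \pref{eq-sg-inf-somej}$, I would assume $r_\lvz(P) \le \rho < 1$, so $\s_\lvz(P|_\lvz) \subseteq \{|z|\le\rho\}$ automatically excludes $1$, and the identity collapses to $\s_\lv(P) \setminus \{1\} = \s_\lvz(P|_\lvz) \subseteq \{|z|\le\rho\}$. That $1$ is an eigenvalue is clear from $P 1_\X = 1_\X$; and if $f = f_0 + c \in \lv$ satisfies $Pf = f$, then $P|_\lvz f_0 = f_0$, so invertibility of $I - P|_\lvz$ on $\lvz$ forces $f_0 = 0$, yielding multiplicity~$1$. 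The same $V$ and $j$ witness \pref{eq-sg-inf-somej}.

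For the converse $\pref{eq-sg-inf-somej} \Rightarrow \pref{eq-srlinf-p-0-somej}$, the identity immediately gives $\s_\lvz(P|_\lvz) \subseteq \{|z|\le\rho\} \cup \{1\}$, so the matter reduces to excluding $1 \in \s_\lvz(P|_\lvz)$, which is the main obstacle. Since $1$ is isolated in $\s_\lv(P)$, it would also be isolated in $\s_\lvz(P|_\lvz)$ if it lay there, and the Riesz spectral projection
$$Q_0 \ = \ \frac{1}{2\pi i}\oint_\gamma (zI - P|_\lvz)^{-1}\,dz$$
(with $\gamma$ a small contour around $1$ enclosing no other spectrum) would be well defined and nonzero. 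Under the direct sum, the corresponding Riesz projection $Q$ of $P$ on $\lv$ at~$1$ decomposes as $Q = Q_0 \oplus I_\R$, so that $\dim \operatorname{ran}(Q) = 1 + \dim \operatorname{ran}(Q_0)$. Reading the multiplicity-$1$ hypothesis as algebraic multiplicity --- i.e., that $1$ is a simple pole of the resolvent of $P$ on $\lv$, which is the natural interpretation in the spectral-gap setting --- one gets $\dim \operatorname{ran}(Q) = 1$, forcing $Q_0 = 0$ and hence $1 \notin \s_\lvz(P|_\lvz)$, so that $r_\lvz(P) \le \rho < 1$.

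The hard part is precisely this exclusion of $1$ from $\s_\lvz(P|_\lvz)$. Under the algebraic reading of multiplicity the Riesz-projection argument above is clean. If instead only geometric multiplicity is postulated, an auxiliary step is required: one first shows that the spectral gap together with the direct-sum structure forces $\sup_n \|P^n\|_\lv < \infty$, which rules out generalized-eigenvector behavior at~$1$ and reduces algebraic to geometric multiplicity. Either way, the identical argument transfers verbatim to the ``for all~$j$'' variants $\pref{eq-sg-inf-allj} \Leftrightarrow \pref{eq-srlinf-p-0-allj}$, with the same witnessing $V$ and $j$ in both directions.
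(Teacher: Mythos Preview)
Your approach is essentially the same as the paper's: both rest on the direct-sum decomposition $P = P|_\lvz \oplus I_\R$ and the resulting spectral identity $\s_\lv(P) = \s_\lvz(P) \cup \{1\}$ from Lemma~\ref{lem-directsum}. The paper's treatment is much terser, however. For the direction \pref{eq-sg-inf-somej} $\Rightarrow$ \pref{eq-srlinf-p-0-somej}, the paper simply asserts that since $1$ has multiplicity~$1$ with eigenvectors the constant functions (which lie outside $\lvz$), one has $\s_\lvz(P) \subseteq \s_\lv(P)\setminus\{1\}$---without invoking Riesz projections or distinguishing algebraic from geometric multiplicity. Your Riesz-projection argument is more rigorous about exactly the point you correctly identify as the main obstacle, and makes explicit that the conclusion relies on the algebraic reading of ``multiplicity~$1$''. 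In the reverse direction the paper invokes Lemma~\ref{lem-P-Linf}(2) directly and does not separately verify the multiplicity-$1$ clause of \pref{eq-sg-inf-somej}, whereas you do; your argument there (invertibility of $I - P|_\lvz$ forces $f_0=0$) is a clean addition.
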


\begin{proof}
($\Rightarrow$)
Since 1 is an eigenvalue with multiplicity~1
by Lemma~\ref{lem-P-Linf},
with corresponding eigenvectors
the non-zero constant functions which are not in $\lvz$, we must have
$\s_{\lvz}(P)
\subseteq \s_{\lv}(P) \setminus \{1\}$.  So,
if \pref{eq-sg-inf-somej} holds, then
$\s_{\lvz}(P)
\subseteq \s_{\lv}(P) \setminus \{1\}
\subseteq \{ z \in \mathbb{C} : |z| \leq \rho \}$ for some $\rho < 1$.
This implies that $r_{\lvz}(P) \le \rho < 1$.

($\Leftarrow$)
If $\rho := r_{\lvz}(P)<1$, then since
$\s_{\lv}(P)\setminus \{1\} \subseteq \s_{\lvz}(P)$
by Lemma~\ref{lem-P-Linf}, we have
\[
\s_{\lv}(P)\setminus \{1\} \ \subseteq \ \s_{\lvz}(P)
\ \subseteq \ \{z\in \mathbb{C} : |z| \leq \rho\}.
\qedhere
\]
\end{proof}

\begin{proposition}\label{proof:xi<->xiii}
\pref{eq-srlinf-p-0-somej} $\Leftrightarrow$ \pref{eq-voinf-somej}, \and
\pref{eq-srlinf-p-0-allj} $\Leftrightarrow$ \pref{eq-voinf-allj}.
\end{proposition}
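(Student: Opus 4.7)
The plan is to use Gelfand's spectral radius formula, which for a bounded operator $T$ on a Banach space $\vecsp$ asserts $r_\vecsp(T) = \lim_{n\to\infty} \|T^n\|_\vecsp^{1/n} = \inf_{n\ge 1} \|T^n\|_\vecsp^{1/n}$. Since $L^\infty_{V,0}$ is a closed subspace of the Banach space $L^\infty_V$ (it is the kernel of the bounded linear functional $f \mapsto \pi(f)$, which is well-defined and bounded when $\pi(V) < \infty$, and this holds by Lemma~\ref{lem-P-Linf} since $\pi(V^j) < \infty$), it is itself a Banach space, and $P$ restricts to a bounded operator on it because $\pi(Pf) = \pi(f) = 0$ whenever $f \in L^\infty_{V,0}$. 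So Gelfand's formula is available.

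For the forward direction \pref{eq-srlinf-p-0-somej} $\Rightarrow$ \pref{eq-voinf-somej}, assume $r_{L^\infty_{V,0}}(P) < 1$, and pick $\rho$ with $r_{L^\infty_{V,0}}(P) < \rho < 1$. By Gelfand's formula, $\lim_{n\to\infty} \|P^n\|_{L^\infty_{V,0}}^{1/n} = r_{L^\infty_{V,0}}(P) < \rho$, so there exists $m \in \N$ with $\|P^m\|_{L^\infty_{V,0}}^{1/m} < \rho < 1$, hence $\|P^m\|_{L^\infty_{V,0}} < 1$. The same function $V$ (with the same moment $\pi(V^j) < \infty$) works throughout, so the ``some $j$'' (resp.\ ``all $j$'') form of the hypothesis transfers directly to the conclusion; this immediately gives the implication in both the ``somej'' and ``allj'' versions.

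For the reverse direction \pref{eq-voinf-somej} $\Rightarrow$ \pref{eq-srlinf-p-0-somej}, suppose $\|P^m\|_{L^\infty_{V,0}} < 1$ for some $m$. Using $r(P^m) = r(P)^m$ (a standard consequence of the spectral mapping theorem for polynomials applied to $z \mapsto z^m$) together with the elementary inequality $r_{L^\infty_{V,0}}(P^m) \le \|P^m\|_{L^\infty_{V,0}}$, we get
\[
r_{L^\infty_{V,0}}(P) \ = \ \big( r_{L^\infty_{V,0}}(P^m) \big)^{1/m}
\ \le \ \|P^m\|_{L^\infty_{V,0}}^{1/m} \ < \ 1.
\]
Again the same $V$ and moment condition carry over, handling both the ``somej'' and ``allj'' versions.

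The only subtle point is justifying that $L^\infty_{V,0}$ is genuinely a Banach space so that Gelfand's formula and the spectral mapping identity apply; this is where the finite-moment hypothesis $\pi(V^j) < \infty$ is used (via Lemma~\ref{lem-P-Linf}) to ensure the functional $f \mapsto \pi(f)$ is continuous on $L^\infty_V$, making its kernel $L^\infty_{V,0}$ closed. Once that is established, everything else is routine application of standard spectral theory.
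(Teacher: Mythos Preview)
Your proof is correct and follows essentially the same approach as the paper: both directions rest on Gelfand's spectral radius formula $r_{\lvz}(P) = \inf_{n\ge 1}\|P^n\|_{\lvz}^{1/n}$. The only cosmetic difference is that for the reverse direction the paper appeals directly to the infimum characterization to get $r_{\lvz}(P)\le\|P^m\|_{\lvz}^{1/m}<1$, whereas you route through $r(P^m)=r(P)^m$ and $r(P^m)\le\|P^m\|$; your added remarks verifying that $\lvz$ is a closed subspace (hence a Banach space) are a welcome bit of care that the paper leaves implicit.
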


\begin{proof}
($\Rightarrow$) By the spectral radius formula
(\cite{rudin:1991functional}, Theorem 10.13), $\rho =
r(P|_{L^{\infty}_{V,0}}) = \inf\limits_{n\geq 1}
\norm{P^{n}}_{\lvz}^{1/n}$. Hence, for any $\rho_{0} < 1$ with $\rho <
\rho_0$, there exists $m \in \N$ such that
$\norm{P^{m}}_{\lvz} < \rho_0^m < 1$.

\noindent ($\Leftarrow$) If $\norm{P^{m}}_{\lvz} < 1$ for some $m\in\N$,
\begin{align*}
r(P|_{L^{\infty}_{V,0}}) &= \inf\limits_{n\geq 1} \norm{P^{n}}_{\lvz}^{1/n} \leq \norm{P^{m}}_{\lvz}^{1/m} < 1,
\end{align*}
and thus, \pref{eq-voinf-somej} holds.
\end{proof}

\begin{proposition}\label{proof:x<->xii}
\pref{eq-srlinf-p-pi-somej} $\Leftrightarrow$ \pref{eq-nlinf-p-pi-somej},
\and
\pref{eq-srlinf-p-pi-allj} $\Leftrightarrow$ \pref{eq-nlinf-p-pi-allj}.
\end{proposition}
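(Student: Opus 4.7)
The plan is to mirror the short proof of Proposition~\ref{proof:xi<->xiii} verbatim, with the role of the operator $P$ on $\lvz$ replaced by the operator $P-\Pi$ on $\lv$. Since both conditions \pref{eq-srlinf-p-pi-somej} and \pref{eq-nlinf-p-pi-somej} assume a function $V$ with $\pi(V^j)<\infty$, Lemma~\ref{lem-P-Linf}(3) yields $\pi(V)<\infty$, which ensures $\Pi$ is a bounded operator on $\lv$ (since $|\Pi f|_V = |\pi(f)|\le \pi(V)$), so that $P-\Pi$ is a bounded operator whenever $P$ is, and the spectral radius formula of \cite[Theorem~10.13]{rudin:1991functional} applies to give
\[
r_{\lv}(P-\Pi) \ = \ \inf_{n\ge 1}\ \norm{(P-\Pi)^{n}}_{\lv}^{1/n}.
\]

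The crucial algebraic identity, already recorded in Section~\ref{sec-definitions}, is that by stationarity of $\pi$,
\[
(P-\Pi)^{n} \ = \ P^{n}-\Pi \qquad \text{for all } n\in\N.
\]
This turns the spectral radius formula into
$r_{\lv}(P-\Pi) = \inf_{n\ge 1}\norm{P^{n}-\Pi}_{\lv}^{1/n}$, after which both directions are immediate.

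For $(\Rightarrow)$, if $r_{\lv}(P-\Pi)<1$, pick $\rho_{0}$ with $r_{\lv}(P-\Pi)<\rho_{0}<1$; then the spectral radius formula yields some $m\in\N$ with $\norm{P^{m}-\Pi}_{\lv}^{1/m}<\rho_{0}<1$, so in particular $\norm{P^{m}-\Pi}_{\lv}<1$, giving \pref{eq-nlinf-p-pi-somej} (or \pref{eq-nlinf-p-pi-allj}, with the same $V$ preserving all moments).

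For $(\Leftarrow)$, if $\norm{P^{m}-\Pi}_{\lv}<1$ for some $m\in\N$, then
\[
r_{\lv}(P-\Pi) \ = \ \inf_{n\ge 1}\norm{P^{n}-\Pi}_{\lv}^{1/n}
\ \le \ \norm{P^{m}-\Pi}_{\lv}^{1/m} \ < \ 1,
\]
which gives \pref{eq-srlinf-p-pi-somej} (respectively \pref{eq-srlinf-p-pi-allj}) with the same $V$. There is essentially no obstacle here beyond checking boundedness of $P-\Pi$; the ``for some $j$'' and ``for all $j$'' versions are proved identically since the same drift function $V$ is used on both sides and its moments are unchanged.
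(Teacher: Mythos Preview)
Your proof is correct and follows essentially the same approach as the paper: both arguments rest on the spectral radius formula $r_{\lv}(P-\Pi)=\inf_{n\ge1}\norm{(P-\Pi)^n}_{\lv}^{1/n}$ together with the identity $(P-\Pi)^n=P^n-\Pi$, after which the two directions are immediate. Your version is slightly more explicit in justifying that $\Pi$ (and hence $P-\Pi$) is bounded on $\lv$ via $\pi(V)<\infty$, which the paper leaves implicit.
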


\begin{proof}
($\Rightarrow$) Given that $\rho_0 = r(P-\Pi) = \inf\limits_{n\geq 1} || P^n - \Pi ||_{L^{\infty}_{V}}^{1/n} $, for $\rho_{0} < \rho < 1$, there exists $m\in \N$ such that $ || P^{m} - \Pi ||_{L^{\infty}_{V}} < \rho^m<1$. Therefore, for some $m \in \N$,
\begin{equation*}
|| P^{m} - \Pi ||_{L^{\infty}_{V}} < 1.
\end{equation*}

\noindent ($\Leftarrow$) If $|| P^{m} - \Pi ||_{L^{\infty}_{V}} < 1$ for some $m \in \N$,
given that $r(P-\Pi) = \inf\limits_{n\geq 1} \norm{P^n - \Pi}_{L^{\infty}_{V}}^{1/n} $, we have
\begin{equation*}
r(P-\Pi) = \inf\limits_{n\geq 1} \norm{P^n - \Pi}_{\lv}^{1/n} \leq \norm{P^{m} - \Pi}_{\lv}^{1/m} < 1.
\end{equation*}
\end{proof}

\section{Proofs for Reversible Conditions}
\label{sec-reversible}

\begin{proposition}\label{proof:revnewa}
\pref{eq-nl2-p-pi} $\implies$ \pref{eq-l2ge-noC}.
\end{proposition}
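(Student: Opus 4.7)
The plan is to exploit two identities from Section~\ref{sec-definitions}: since $\pi$ is stationary, $(P-\Pi)^n = P^n - \Pi$ for all $n\in\N$, and moreover $\pi(P-\Pi) = \pi P - \pi\Pi = \pi - \pi = 0$, so $\pi$ is annihilated by $P-\Pi$. These two facts let me reduce the entire claim to a single application of submultiplicativity of the $L^2(\pi)$ operator norm.

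Set $\rho := \|P - \Pi\|_{\lpp}$, which is strictly less than $1$ by hypothesis \pref{eq-nl2-p-pi}. Given a probability measure $\mu \in \lpp$, first I would verify that $\mu - \pi \in \lpp$: its Radon--Nikodym density with respect to $\pi$ is $d\mu/d\pi - 1$, which is square-integrable whenever $d\mu/d\pi$ is. Then I would rewrite
\[
\mu P^n - \pi \ = \ \mu(P^n - \Pi) \ = \ \mu(P - \Pi)^n \ = \ (\mu - \pi)(P - \Pi)^n,
\]
where the first equality uses $\mu\Pi = \pi$, the second uses stationarity, and the third uses $\pi(P-\Pi)^n = 0$. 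Submultiplicativity of the operator norm then gives
\[
\|\mu P^n - \pi\|_{\lpp} \ \le \ \|P - \Pi\|_{\lpp}^n \, \|\mu - \pi\|_{\lpp} \ = \ \rho^n \, \|\mu - \pi\|_{\lpp},
\]
which is precisely \pref{eq-l2ge-noC}.

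I do not anticipate any real obstacle: once the rewriting $\mu P^n - \pi = (\mu - \pi)(P - \Pi)^n$ is in hand, the rest is a single appeal to submultiplicativity. It is worth noting that this particular implication does not actually invoke the reversibility assumption; it uses only the $L^2(\pi)$-boundedness of $P-\Pi$ built into hypothesis \pref{eq-nl2-p-pi}, together with the stationarity identity $(P-\Pi)^n = P^n - \Pi$. Reversibility becomes essential only for the implications in the opposite direction within the reversible block.
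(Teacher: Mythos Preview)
Your proof is correct and follows essentially the same route as the paper's: both rewrite $\mu P^n - \pi$ as $(\mu-\pi)(P-\Pi)^n$ and then apply the operator-norm bound $\|P-\Pi\|_{\lpp}^n \le \rho^n$. Your remark that reversibility is not actually invoked here is accurate and worth keeping; the paper cites Lemma~\ref{lem-P-L2} (stated under a reversibility hypothesis), but the only fact used is the elementary identity $(\mu-\pi)\Pi=0$, which needs nothing beyond $\mu(\X)=1$.
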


\begin{proof}
Let $\rho = \norm{P - \Pi}_{\lpp} < 1$.
Then for each signed measure $\mu \in L^2 (\pi)$,
\[
\norm{\mu (P - \Pi)(\cdot) }_{\lpp} \leq \rho \norm{\mu}_{\lpp}.
\]
Let $\mu \in \lpp$ be a probability measure and let $n\in \N$. By Lemma \ref{lem-P-L2}, $\mu - \mu(\X)\pi = \mu - \pi$ is orthogonal to $\pi$,
so $(\mu - \pi) \Pi = 0$, and hence
$$
\mu P^n - \pi = (\mu-\pi) P^n = (\mu-\pi)(P^n-\Pi)
= (\mu-\pi)(P-\Pi)^n.
$$
Therefore,
\begin{align*}
\norm{\mu P^{n}(\cdot) -\pi(\cdot)}_{\lpp}
&= \norm{(\mu - \pi)( P -\Pi)^{n}(\cdot)}_{\lpp} \\
&\leq \norm{\mu - \pi}_{\lpp} \, \norm{P^{n}- \Pi}_{\lpp} \\
&\leq \norm{\mu - \pi}_{\lpp} \, \rho^{n} .
\qedhere
\end{align*}
\end{proof}

\begin{proposition}\label{proof:xvii->xvi}
\pref{eq-l2ge-noC} $\implies$ \pref{eq-l2ge-Cmu}.
\end{proposition}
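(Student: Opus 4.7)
The plan is immediate: the bound in \pref{eq-l2ge-noC} is already of the exact form required by \pref{eq-l2ge-Cmu}, once we identify the right constant. Specifically, with the same $\rho < 1$ furnished by \pref{eq-l2ge-noC}, I would set
\[
C_\mu \ := \ \norm{\mu - \pi}_{L^2(\pi)}
\]
for each probability measure $\mu \in L^2(\pi)$.

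The only thing to verify is that this $C_\mu$ is finite. Since $\pi \in L^2(\pi)$ (indeed $\norm{\pi}_{L^2(\pi)} = 1$ as computed in the proof of Lemma~\ref{lem-L1L2}) and $\mu \in L^2(\pi)$ by hypothesis, linearity of the $L^2(\pi)$-norm space gives $\mu - \pi \in L^2(\pi)$, so $C_\mu < \infty$ via the triangle inequality $\norm{\mu - \pi}_{L^2(\pi)} \leq \norm{\mu}_{L^2(\pi)} + 1$.

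There is essentially no obstacle here — \pref{eq-l2ge-noC} is a strengthening of \pref{eq-l2ge-Cmu} in which the per-$\mu$ constant has been identified explicitly as $\norm{\mu - \pi}_{L^2(\pi)}$, so the reverse implication is a one-line substitution. The proof is a two-sentence argument citing the finiteness observation above.
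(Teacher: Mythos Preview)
Your proposal is correct and matches the paper's proof exactly: the paper also sets $C_\mu = \norm{\mu - \pi}_{\lpp}$ and concludes in one line. Your extra remark verifying $C_\mu < \infty$ via the triangle inequality is a nice explicit touch that the paper leaves implicit.
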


\begin{proof}
If \pref{eq-l2ge-noC} holds, for each probability measure $\mu \in \lpp$ and $n\in \N$,
\begin{align*}
    \norm{\mu P^n(\cdot) - \pi(\cdot)}_{\lpp} \leq \norm{\mu - \pi}_{\lpp}\rho^n = C_{\mu}\rho^n,
\end{align*}
with $C_{\mu} = \norm{\mu - \pi}_{\lpp}$.
\end{proof}

\begin{proposition}\label{proof:xvi->xvii'}
\pref{eq-l2ge-Cmu} $\implies$ \pref{eq-gefslp}.
\end{proposition}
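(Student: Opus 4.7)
The plan is to set $p = 2$ in condition \pref{eq-gefslp} and convert the $L^2(\pi)$-decay guaranteed by \pref{eq-l2ge-Cmu} into a total variation decay via the sandwich $\tvnorm{\cdot} \sim L^1(\pi) \le L^2(\pi)$. Fix a probability measure $\mu \in L^2(\pi)$. Since $\mu P^n$ is itself a probability measure and thus, together with $\pi$, satisfies the hypothesis of Lemma~\ref{lem-TVL1}, the first step is to write
\[
\tvnorm{\mu P^n(\cdot) - \pi(\cdot)} \ = \ \tfrac{1}{2} \, \norm{\mu P^n - \pi}_{L^1(\pi)}.
\]

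Next, I would apply Lemma~\ref{lem-L1L2} to obtain $\norm{\mu P^n - \pi}_{L^1(\pi)} \le \norm{\mu P^n - \pi}_{L^2(\pi)}$. To invoke this lemma one needs $\mu P^n - \pi \ll \pi$, but this comes for free: by \pref{eq-l2ge-Cmu} the $L^2(\pi)$-norm $\norm{\mu P^n - \pi}_{L^2(\pi)}$ is finite (in fact, $\le C_\mu \rho^n$), and the very definition of $\norm{\cdot}_{L^2(\pi)}$ from Section~\ref{sec-definitions} forces a signed measure of finite $L^2(\pi)$-norm to be absolutely continuous with respect to $\pi$.

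Chaining the three estimates gives
\[
\tvnorm{\mu P^n(\cdot) - \pi(\cdot)} \ \le \ \tfrac{1}{2} \, \norm{\mu P^n - \pi}_{L^1(\pi)} \ \le \ \tfrac{1}{2} \, \norm{\mu P^n - \pi}_{L^2(\pi)} \ \le \ \tfrac{1}{2} \, C_\mu \, \rho^n,
\]
which is exactly \pref{eq-gefslp} with $p = 2$, rate $\rho_\mu := \rho < 1$ (inherited unchanged, indeed uniformly in $\mu$), and constant $\tfrac{1}{2} C_\mu < \infty$.

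I do not expect a real obstacle here: the argument is a three-line concatenation of (a) Lemma~\ref{lem-TVL1}, (b) Lemma~\ref{lem-L1L2}, and (c) the hypothesis \pref{eq-l2ge-Cmu}. The only mildly subtle point is the automatic absolute continuity needed to apply Lemma~\ref{lem-L1L2}, which, as noted, is a tautological consequence of the finite $L^2(\pi)$-norm supplied by \pref{eq-l2ge-Cmu}.
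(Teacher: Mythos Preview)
Your proof is correct and follows essentially the same approach as the paper: set $p=2$, apply Lemma~\ref{lem-TVL1} to rewrite the total variation distance as $\tfrac{1}{2}\norm{\cdot}_{L^1(\pi)}$, bound this by $\tfrac{1}{2}\norm{\cdot}_{L^2(\pi)}$ via Lemma~\ref{lem-L1L2}, and invoke \pref{eq-l2ge-Cmu}. Your added remark about the automatic absolute continuity needed for Lemma~\ref{lem-L1L2} is a nice clarification that the paper leaves implicit.
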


\begin{proof}
If \pref{eq-l2ge-Cmu} holds,
then by Lemmas~\ref{lem-TVL1} and~\ref{lem-L1L2},
for each $n\in\N$ and $\mu\in\lpp$ we have
$$
\tvnorm{\mu P^{n}(\cdot) - \pi(\cdot )}
\ = \ \half \,  \norm{\mu P^n(\cdot) - \pi(\cdot)}_{L^1(\pi)}
\ \leq \ \half \, \norm{\mu P^n(\cdot) - \pi(\cdot)}_{L^2(\pi)}
\ \leq \ \half \, C_{\mu} \, \rho^n.
$$
This shows \pref{eq-gefslp} with $p=2$ and $\rho_\mu = \rho$.
\end{proof}

\begin{proposition}\label{proof:iii->xviii}
\pref{eq-gefalp} $\implies$ \pref{eq-nl2-p-0}.
\end{proposition}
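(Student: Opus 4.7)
The plan is to upgrade the given TV-convergence at $p=2$ to a uniform $L^2(\pi)\to L^1(\pi)$ operator bound via the Uniform Boundedness Principle, dualize using reversibility (self-adjointness) to obtain an $L^\infty\to L^2$ bound for bounded mean-zero functions, and then invoke the spectral theorem for the self-adjoint operator $P|_{\pi^\bot}$ to force its spectrum into $[-\rho_2,\rho_2]$.

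For the first two steps, I would take \pref{eq-gefalp} at $p=2$ together with Lemma~\ref{lem-TVL1} to get $\norm{\mu(P^n-\Pi)}_{L^1(\pi)} \leq 2C_{2,\mu}\rho_2^n$ for every probability measure $\mu\in L^2(\pi)$. Writing any signed $\nu\in L^2(\pi)$ by Jordan decomposition as $\nu=a\mu^+-b\mu^-$ with $\mu^\pm$ probability measures in $L^2(\pi)$ produces $\norm{\nu(P^n-\Pi)}_{L^1(\pi)} \leq C_\nu\rho_2^n$ pointwise in $\nu$. The Uniform Boundedness Principle, applied to the family $\{\rho_2^{-n}(P^n-\Pi):L^2(\pi)\to L^1(\pi)\}_n$, then yields a uniform $K$ with $\norm{P^n-\Pi}_{L^2(\pi)\to L^1(\pi)}\leq K\rho_2^n$. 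Since $P^n-\Pi$ is self-adjoint on $L^2(\pi)$ by Lemma~\ref{lem-P-L2}(1), the duality identity $\norm{A}_{L^2\to L^1}=\norm{A^*}_{L^\infty\to L^2}$ combined with $A^*=A$ gives $\norm{P^n-\Pi}_{L^\infty(\pi)\to L^2(\pi)}\leq K\rho_2^n$; and for $f\in L^\infty(\pi)$ with $\pi(f)=0$ one has $(P^n-\Pi)f=P^n f$, so
\[
\norm{P^n f}_{L^2(\pi)} \ \leq \ K\rho_2^n\,\|f\|_\infty \qquad \text{for all bounded mean-zero } f .
\]

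For the spectral step, I would identify $\pi^\bot$ with $L^2_0(\pi):=\{f\in L^2(\pi):\pi(f)=0\}$ via $\nu\mapsto d\nu/d\pi$, let $E$ denote the spectral measure of the self-adjoint operator $P|_{L^2_0(\pi)}$, and set $\mu_f(B):=\norm{E(B)f}_{L^2(\pi)}^2$. The previous bound rewrites, for any bounded mean-zero $f$ and every $n\in\N$, as
\[
\int_{-1}^{1}\lambda^{2n}\,d\mu_f(\lambda) \ = \ \norm{P^n f}_{L^2(\pi)}^2 \ \leq \ K^2\rho_2^{2n}\|f\|_\infty^2 .
\]
For each $r>\rho_2$ the trivial inequality $r^{2n}\mu_f(\{|\lambda|>r\})\leq\int\lambda^{2n}d\mu_f(\lambda)$ forces $\mu_f(\{|\lambda|>r\})\to 0$ as $n\to\infty$, hence $\mu_f(\{|\lambda|>r\})=0$; taking $r\downarrow\rho_2$ along a countable sequence puts $\mu_f$ on $[-\rho_2,\rho_2]$. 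Density of bounded mean-zero functions in $L^2_0(\pi)$ together with boundedness of the spectral projections extends this to every $f\in L^2_0(\pi)$, so $\s_{\pi^\bot}(P)\subseteq[-\rho_2,\rho_2]$; self-adjointness then yields $\norm{P}_{\pi^\bot}=r_{\pi^\bot}(P)\leq\rho_2<1$, which is \pref{eq-nl2-p-0}. The main obstacle I anticipate is bridging the $L^2\to L^1$ operator bound to an $L^2\to L^2$ one: Riesz--Thorin interpolation between the $L^\infty\to L^2$ and $L^2\to L^1$ bounds never reaches the diagonal exponent pair $(1/2,1/2)$, so operator interpolation alone cannot close the gap; routing through the spectral measure $\mu_f$ sidesteps this by constraining the spectrum directly via its moments.
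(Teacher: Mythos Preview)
Your argument is correct and essentially self-contained, whereas the paper's own proof simply specializes \pref{eq-gefalp} to $p=2$ and then invokes the implication ``$(iii)\Rightarrow(ii)$'' of \cite[Theorem~2]{hybrid}, which is proved there by \emph{contradiction}: assuming $\|P\|_{\pi^\bot}=1$, one finds nontrivial spectral mass near $\pm1$ and manufactures from the corresponding spectral projection a probability measure in $L^2(\pi)$ whose convergence in total variation cannot beat any fixed geometric rate. Both routes ultimately rest on the spectral theorem for the self-adjoint operator $P|_{\pi^\bot}$, but the organization is genuinely different. Your forward argument---upgrading the pointwise-in-$\mu$ bound to a uniform $L^2(\pi)\to L^1(\pi)$ operator bound via the Uniform Boundedness Principle, dualizing through self-adjointness to an $L^\infty\to L^2$ bound, and then reading off the support of every scalar spectral measure $\mu_f$ from the decay of its even moments---yields the quantitative conclusion $\|P\|_{\pi^\bot}\le\rho_2$ directly, without contradiction. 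The paper's cited argument is shorter to state but relies on the external reference; your approach buys a fully self-contained proof at the cost of the extra UBP/duality machinery. One small remark: your duality step implicitly passes to densities via Lemma~\ref{lem-derivate-l2} (so that $(L^1)^*=L^\infty$ is the standard function-space duality), and your identification $A^*=A$ on $L^\infty$ uses that $P^n-\Pi$ already maps $L^2$ into $L^2$; both are fine here but worth making explicit.
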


\begin{proof}
Take $p=2$ in \pref{eq-gefalp}.
Then it follows from the ``$(iii) \implies (ii)$'' implication
of \cite[Theorem~2]{hybrid} (which is proven by
contradiction, using reversibility and
the spectral measure of $P$ acting on $\lpp$)
that there is $\rho<1$ such that
$$
\|\mu P \|_{\lpp} \ \le \ \rho \, \|\mu\|_{\lpp}
$$
for all probability measures $\mu\in\lpp$ with $\mu(\X)=0$.
Hence, $\norm{P}_{\pi^\bot} \le \rho < 1$.
\end{proof}

\begin{proposition}\label{proof:xxi<->xxii}
\pref{eq-nl2-p-0} $\Leftrightarrow$ \pref{eq-srl2-p-0}.
\end{proposition}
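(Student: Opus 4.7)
The plan is to use the general inequality $r(T)\le\|T\|$ for one direction and to use self-adjointness on $\pi^\bot$ for the other. Recall that $\pi^\bot=\{\mu\in L^2(\pi):\mu(\X)=0\}$ is a closed subspace of $L^2(\pi)$, hence a Hilbert space in its own right, and it is invariant under $P$ because $(\mu P)(\X)=\mu(\X)=0$ by stationarity of $\pi$. So $P|_{\pi^\bot}$ is a well-defined bounded operator on the Hilbert space $\pi^\bot$.

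For the forward direction \pref{eq-nl2-p-0}$\Rightarrow$\pref{eq-srl2-p-0}, I would just invoke the spectral radius formula (Gelfand's formula): for any bounded operator $T$ on a Banach space, $r(T)=\lim_{n\to\infty}\|T^n\|^{1/n}\le\|T\|$. Applying this with $T=P|_{\pi^\bot}$ gives $r_{\pi^\bot}(P)\le\|P\|_{\pi^\bot}<1$ immediately.

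The reverse direction \pref{eq-srl2-p-0}$\Rightarrow$\pref{eq-nl2-p-0} is where reversibility enters, and is the only non-routine step. By reversibility, $P$ is self-adjoint on $L^2(\pi)$; combined with the direct-sum decomposition $P=P|_{\pi^\bot}\oplus I_\R$ (as in the proof of Lemma~\ref{lem-P-L2}(iv), using $L^2(\pi)=\pi^\bot\times\R\pi$), this shows $P|_{\pi^\bot}$ is itself self-adjoint on the Hilbert space $\pi^\bot$. For any self-adjoint (more generally, normal) bounded operator $T$ on a Hilbert space, one has the equality $\|T\|=r(T)$ (a standard consequence of the spectral theorem, or alternatively of the identity $\|T^2\|=\|T\|^2$ iterated with Gelfand's formula). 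Applying this to $T=P|_{\pi^\bot}$ yields $\|P\|_{\pi^\bot}=r_{\pi^\bot}(P)<1$.

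The main obstacle is just making sure the self-adjointness transfers cleanly from $L^2(\pi)$ to the invariant subspace $\pi^\bot$; this is routine once the direct-sum decomposition above is noted, and indeed the equality $\|P-\Pi\|_{L^2(\pi)}=\|P\|_{\pi^\bot}$ already proven in Lemma~\ref{lem-equality-l2} could alternatively be combined with the self-adjointness of $P-\Pi$ from Lemma~\ref{lem-P-L2}(i) to reach the same conclusion. Either route gives both implications with essentially no computation.
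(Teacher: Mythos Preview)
Your proposal is correct and takes essentially the same approach as the paper: both rest on the equality $r_{\pi^\bot}(P)=\|P\|_{\pi^\bot}$ for the self-adjoint operator $P|_{\pi^\bot}$, after which the equivalence is immediate. The paper simply cites this equality (Conway, Proposition~VIII.1.11(e)) in one line, whereas you spell out why $P|_{\pi^\bot}$ is self-adjoint and split the two directions explicitly; the content is the same.
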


\begin{proof} 
This follows immediately from the fact
(e.g.\ \cite[Proposition VIII.1.11(e)]{conway:2019course})
that, by reversibility,
$r_{\pi^\bot}(P) = ||P||_{\pi^\bot}$.
\end{proof}

\begin{proposition}\label{proof:xx<->xxi}
\pref{eq-nl2-p-0} $\implies$ \pref{eq-nl2-p-pi}.
\end{proposition}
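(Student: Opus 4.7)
The plan is to appeal directly to Lemma~\ref{lem-equality-l2}, which was established precisely for this purpose. That lemma asserts the identity
\[
\norm{P - \Pi}_{\lpp} \ = \ \norm{P}_{\pi^\bot},
\]
whose proof proceeds by decomposing an arbitrary $\mu \in \lpp$ as $\mu = \mu_0 + c\,\pi$ with $\mu_0 \in \pi^\bot$ and $c = \mu(\X)$, observing that $\mu(P - \Pi) = \mu_0 P$, and then noting that the supremum defining $\norm{P - \Pi}_{\lpp}$ is realised by taking $c = 0$.

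Given this identity, the implication is immediate: if \pref{eq-nl2-p-0} holds, then $\norm{P}_{\pi^\bot} < 1$, so by Lemma~\ref{lem-equality-l2} we have $\norm{P - \Pi}_{\lpp} = \norm{P}_{\pi^\bot} < 1$, which is exactly \pref{eq-nl2-p-pi}. There is no obstacle here; all the work was already absorbed into the preliminary lemma, and the proposition is essentially a one-line invocation of it.
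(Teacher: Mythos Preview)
Your proposal is correct and matches the paper's proof essentially verbatim: both invoke Lemma~\ref{lem-equality-l2} to obtain $\norm{P-\Pi}_{\lpp} = \norm{P}_{\pi^\bot}$ and conclude immediately. Your additional recap of how that lemma is proved is accurate but unnecessary here.
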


\begin{proof}
From Lemma \ref{lem-equality-l2} it follows that
\[
\norm{P - \Pi}_{\lpp} \ = \ \norm{P}_{\pi^\bot}.
\]
Hence,
if $\norm{P}_{\pi^\bot} < 1$, then $\norm{P - \Pi}_{\lpp}<1$.
\end{proof}

\begin{proposition}\label{proof:xix<->xx}
\pref{eq-srl2-p-pi} $\Leftrightarrow$ \pref{eq-nl2-p-pi}.
\end{proposition}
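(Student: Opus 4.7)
The plan is to reduce this equivalence to the standard Hilbert-space fact that for a self-adjoint operator $T$ on a Hilbert space, the spectral radius equals the operator norm, i.e.\ $r(T) = \|T\|$ (see e.g.\ \cite[Proposition VIII.1.11(e)]{conway:2019course}, which was already invoked in Proposition~\ref{proof:xxi<->xxii} for $P$ on $\pi^\bot$).

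First, I would note that since $\Phi$ is reversible, Lemma~\ref{lem-P-L2}(1) tells us that $P - \Pi$ is a self-adjoint operator on $L^2(\pi)$. Second, I would apply the spectral-radius-equals-norm identity for self-adjoint (more generally, normal) bounded operators on a Hilbert space to conclude
\[
r_{L^2(\pi)}(P - \Pi) \ = \ \|P - \Pi\|_{L^2(\pi)}.
\]
The equivalence $\pref{eq-srl2-p-pi} \Leftrightarrow \pref{eq-nl2-p-pi}$ is then immediate: one quantity is strictly less than~1 if and only if the other is.

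There is essentially no obstacle here; the only substantive content has already been packaged into the self-adjointness statement of Lemma~\ref{lem-P-L2}(1) and the cited Hilbert space fact. The proof is a one-line invocation of these two results, and in structure it mirrors Proposition~\ref{proof:xxi<->xxii} exactly, with $P - \Pi$ on $L^2(\pi)$ playing the role that $P$ played on $\pi^\bot$.
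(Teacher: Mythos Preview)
Your proposal is correct and matches the paper's own proof essentially line for line: invoke Lemma~\ref{lem-P-L2} to get that $P-\Pi$ is self-adjoint, then apply \cite[Proposition~VIII.1.11(e)]{conway:2019course} to conclude $r_{\lpp}(P-\Pi)=\|P-\Pi\|_{\lpp}$, from which the equivalence is immediate.
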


\begin{proof}
Since $P$ is reversible, $P - \Pi$ is self-adjoint by Lemma~\ref{lem-P-L2}.
Therefore, $r_{\lpp}(P - \Pi) = || P - \Pi ||_{L^{2}(\pi)}$
(e.g.\ \cite[Proposition~VIII.1.11(e)]{conway:2019course}).
Hence,
$||P-\Pi||_{\lpp} < 1$ if and only if $r_{\lpp}(P - \Pi)<1$.
\end{proof}

\begin{proposition}\label{proof:xviii<->xix}
\pref{eq-sg-l2} $\Leftrightarrow$ \pref{eq-srl2-p-0}.
\end{proposition}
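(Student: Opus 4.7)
The plan is to exploit the direct sum decomposition $\lpp = \pi^\bot \oplus \R\pi$ already used in the proof of Lemma~\ref{lem-P-L2}(4). Under reversibility, $\pi^\bot$ is $P$-invariant (since $\langle \mu_0 P, \pi\rangle = \langle \mu_0, P\pi\rangle = \langle \mu_0,\pi\rangle = 0$ for $\mu_0 \in \pi^\bot$), and on $\R\pi$ the operator $P$ acts as the identity, so $P = P|_{\pi^\bot} \oplus I_{\R\pi}$ and Lemma~\ref{lem-directsum} gives the key identity
\[
\s_{\lpp}(P) \ = \ \s_{\pi^\bot}(P) \, \cup \, \{1\}.
\]

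For the $(\Rightarrow)$ direction, assume \pref{eq-sg-l2} with corresponding $\rho<1$. Since $\s_{\pi^\bot}(P) \subseteq \s_{\lpp}(P)$, the identity immediately yields $\s_{\pi^\bot}(P) \setminus \{1\} \subseteq \{|z| \le \rho\}$, so the only obstruction to concluding $r_{\pi^\bot}(P)<1$ is the possibility that $1 \in \s_{\pi^\bot}(P)$. This is where I lean more fully on reversibility: $P|_{\pi^\bot}$ is a bounded self-adjoint operator on the Hilbert space $\pi^\bot$, so any isolated point of its spectrum must be an eigenvalue. If $1$ were in $\s_{\pi^\bot}(P)$, then since $\rho<1$ it would be isolated there, hence an eigenvalue, producing an eigenvector $\mu_0 \in \pi^\bot$ linearly independent from $\pi$; this would contradict the hypothesis that $1$ has multiplicity~$1$ in $\lpp$. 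Therefore $r_{\pi^\bot}(P) \le \rho < 1$.

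For the $(\Leftarrow)$ direction, assume $r_{\pi^\bot}(P) < 1$. Then $1 \notin \s_{\pi^\bot}(P)$, and the identity collapses to $\s_{\lpp}(P) \setminus \{1\} = \s_{\pi^\bot}(P) \subseteq \{|z| \le r_{\pi^\bot}(P)\}$, giving the spectral-gap bound with $\rho = r_{\pi^\bot}(P)$. Also $\pi P = \pi$ exhibits $1$ as an eigenvalue, and multiplicity~$1$ follows from Lemma~\ref{lem-eigenmult}, or directly from the decomposition: any $\mu = \mu_0 + c\pi$ with $\mu P = \mu$ forces $\mu_0 P = \mu_0$, but $1 \notin \s_{\pi^\bot}(P)$ forces $\mu_0 = 0$, leaving $\mu \in \R\pi$.

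The main obstacle is the step ruling out $1 \in \s_{\pi^\bot}(P)$ in the forward direction. This is precisely where reversibility is essential: without self-adjointness of $P|_{\pi^\bot}$ one cannot conclude that an isolated spectral point is an eigenvalue, and the multiplicity~$1$ hypothesis would not yield the required contradiction.
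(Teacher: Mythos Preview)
Your proof is correct and follows essentially the same route as the paper: both directions rest on the direct-sum decomposition $P = P|_{\pi^\bot}\oplus I_{\R\pi}$ and the resulting spectral identity from Lemma~\ref{lem-directsum} (equivalently Lemma~\ref{lem-P-L2}(4)). You are simply more explicit than the paper in the $(\Rightarrow)$ direction---the paper asserts $\s_{\pi^\bot}(P)\subseteq\s_{\lpp}(P)\setminus\{1\}$ directly from the multiplicity-$1$ hypothesis, whereas you spell out the self-adjointness argument (isolated spectral point $\Rightarrow$ eigenvalue) that rules out $1\in\s_{\pi^\bot}(P)$.
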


\begin{proof}
($\Rightarrow$) If \pref{eq-sg-l2} holds, there is $\rho < 1$ such that  
\begin{equation*}
    \s_{\lpp}(P) \ \subseteq \ \{1\} \cup
 \{ \lambda \in \mathbb{C} : |\lambda| \leq \rho \}.
\end{equation*}
Since 1 is an eigenvalue of multiplicity~1
by Lemma~\ref{lem-P-Linf},
with corresponding eigenvectors
the non-zero constant multiples of $\pi$ which are not in $\pi^\bot$,
we must have
$\s_{\pi^\bot}(P) \subseteq \s_{\lpp}(P) \setminus \{1\}$.  Hence,
$\s_{\pi^\bot}(P) \subseteq \{ \lambda \in \mathbb{C}
: |\lambda| \leq \rho \}$.
Therefore, $r(P|_{\pi^{\bot}}) \le \rho <1$.

($\Leftarrow$) If $r_{\pi^\bot}(P)<1$, there is $\rho<1$ with
$\s_{\pi^\bot}(P)
\subseteq \{ \lambda \in \mathbb{C} : |\lambda| \leq \rho \}$.
So,
by Lemma~\ref{lem-P-L2},
\[
\s_{\lpp}(P)\setminus \{1\} \ \subseteq \ \s_{\pi^\bot}(P)
\ \subseteq \ \{ \lambda \in \mathbb{C} : |\lambda| \leq \rho \}.
\qedhere
\]
\end{proof}

\section{Future Directions and Open Problems}
\label{sec-open}

Our Theorem~\ref{thm-equiv} above provides a fairly complete picture
of equivalences of geometric ergodicity.  However, it does lead to
some additional questions which remain, including:

\renewcommand{\labelenumi}{{\bf Q$\,$\thesection.\arabic{enumi}.} }
\begin{enumerate}

\item
\label{q-irred}
We have assumed throughout that the chain is {\it $\phi$-irreducible}
and {\it aperiodic}.  Those
properties are certainly required for, and implied by, geometric
ergodicity.  But do they need to be assumed explicitly?  Many of our
equivalent conditions {\it imply} them, so that they do
not actually need to be mentioned.  But some of our conditions
do not, e.g.\ the drift
conditions \pref{eq-dc-nomom} and \pref{eq-dc-allj}.  So, which of our
equivalences continue to hold without assuming $\phi$-irreducibility
and aperiodicity?

\item\label{q-countgen}
We also assumed that our state space $(\X,\F)$ is {\it countably
generated}, which holds for e.g.\ the Borel subsets of $\R$ and
of $\R^d$, but {\it not} for e.g.\ the Lebesgue-measurable
subsets.  It is a very standard assumption
(e.g.~\cite[p.~66]{MeynTweedie}), used to ensure the existence of small sets
\cite{Doeblin1940,JainJamison, Orey}
and the measurability of certain functions (e.g.\ \cite[Appendix]{hybrid}).
But which of our equivalences would continue to hold without it?

\item
\label{q-varbound}
The property of {\it aperiodicity} is not necessary for other
important properties such as Central Limit Theorems which involve
averages of functional values like ${1 \over M} \sum_{i=1}^M h(X_i)$.
The weaker notion of {\it variance bounding} essentially
corresponds to geometric ergodicity without aperiodicity, and still
implies CLTs.  Many equivalences to variance bounding
have been proven for {\it reversible}
chains; see~\cite{VarBounding}.  But can equivalences similar to our
Theorem~\ref{thm-equiv} be derived for the variance bounding property
without assuming reversibility?

\item\label{q-reversible}
Our later conditions \condref{eq-l2ge-Cmu} through
\condref{eq-srl2-p-0} were only shown to be equivalent for {\it
reversible} chains.  But are there explicit counter-examples to show that
they are {\it not} equivalent in the absence of reversibility?  Or are
some of them are still equivalent to geometric ergodicity, even without
assuming reversibility?
(For a start on this, \cite[Theorem~1.3]{KM2} proves that
without reversibility the implication
\pref{eq-srl2-p-pi} $\implies$ \pref{eq-geometric-erg}
still holds, but \cite[Theorem~1.4]{KM2}
makes use of \cite{haggstrom} to show that the converse might fail.)

\item Our equivalences are for the fairly strong property of geometric
ergodicity.  But are there similar equivalences for the even
stronger property of {\it uniform ergodicity},
i.e.\ the property that 
$\tvnorm{P^n(x,\cdot) - \pi(\cdot)} \, \le \, C \, \rho^n$
from $\pi$-a.e.\ $x\in\X$ where $C$ does not depend on $x$?
(For a start on this, see \cite[Theorem~16.0.2]{MeynTweedie}.)

\item In the other direction,
are there similar equivalences for the weaker property of
{\it polynomial ergodicity},
i.e.\ the property that 
$\tvnorm{P^n(x,\cdot) - \pi(\cdot)} \, \le \, C_x \, n^{-\alpha}$
for some $\alpha>0$?
(For some discussion and results related to this property,
see e.g.\ \cite{FortMoulines,JarnerPol}.)

\item\label{q-simpleerg}
And, are there similar equivalences for the even weaker property of
{\it simple ergodicity}, i.e.\ the property that just
$\tvnorm{P^n(x,\cdot) - \pi(\cdot)} \, \to \, 0$ as $n\to\infty$
from $\pi$-a.e.\ $x\in\X$, without specifying any rate?
(For a start on this, see e.g.\ \cite[Theorem~13.0.1]{MeynTweedie}.)

\end{enumerate}

\noindent We leave these questions as open problems for future work.

\bigskip\bigskip\noindent\bf Acknowledgements. \rm
We thank Jim Hobert, Galin Jones, and Gareth Roberts for encouraging
us to write this paper, and thank the anonymous referee for a very
careful reading and helpful report.

\bigskip\noindent\bf Note added in proof: \rm
It follows from Proposition~16 on page 3607 of {\it Annals of
Applied Probability} {\bf 25(6)} (2015) that we can also include
the additional equivalent condition:
\begin{itemize}
\item[$vii'$)\ ]
{\sl
There exists a small set $S\in\F$ and constant $\kappa>1$ such
that if $V(x) = \E_x(\kappa^{\tau_S})$ for all $x\in\X$,
then $PV(x) \le \lambda \, V(x) + b \, \one_S(x)$ for all
$x\in\X$, where $\lambda = \kappa^{-1} < 1$ and $b = \sup_{x\in S}
V(x) < \infty$.
}
\end{itemize}

\bigskip\noindent\bf Notes added after publication: \rm
It follows from \cite[Theorem~15.4.1]{MeynTweedie} that
the following condition is implied by $(i)$,
so since it clearly implies $(ii)$ it is also equivalent:
\begin{itemize}
\item[$ii'$)\ ]
{\sl
There exists an absorbing
subset $H\in\F$ with $\pi(H)=1$ such that
there are $\rho<1$ and $C_x<\infty$ such that for all $x\in H$,
            \begin{align*}
                \tvnorm{P^n(x,\cdot) - \pi(\cdot)} \ \leq \ C_x \, \rho^n
\qquad \hbox{\rm for all} \ n\in\N .
            \end{align*}
}
\end{itemize}

\noindent
And, it follows from Theorem~2.1 of Lawler and Sokal
(Trans AMS {\bf 309(2)}, October 1988, 557--580) that for
reversible chains, the spectral gap condition $(xxx)$ is equivalent to:
\begin{itemize}
\item[$xxx'$)\ ]
{\sl
$k>0$, where $k = \inf\limits_{A\in\F \atop 0 < \pi(A) < 1}
{\int_{x\in A} \pi(dx) \, P(x,A^C) \over \pi(A) \, \pi(A^C)}$
is the conductance (Cheeger's constant).
}
\end{itemize}
\bigskip

\ifdec

\section*{Declarations}

This work was supported in part by Discovery Grant RGPIN-2019-04142 from
the Natural Science and Engineering Research Council (NSERC) of Canada.

The authors have no competing interests to declare that are relevant to
the content of this article.

Data sharing is not applicable to this article as no datasets were
generated or analysed during the current study.

\fi

\bibliography{references}

\begin{thebibliography}{10}

\bibitem{bradley:2019exposition}
R.~C. Bradley.
\newblock An exposition of some basic features of strictly stationary,
  reversible {M}arkov chains.
\newblock {\em Journal of Time Series Analysis}, 42(5--6):499--533, 2021.

\bibitem{handbook}
S.~Brooks, A.~Gelman, G.~L. Jones, and X.-L. Meng, editors.
\newblock {\em Handbook of {M}arkov chain Monte Carlo}.
\newblock Chapman \& Hall, 2011.

\bibitem{conway:2019course}
J.~B. Conway.
\newblock {\em A course in functional analysis, 2nd ed.}
\newblock Springer Science Business Media, New York, 1990.

\bibitem{Doeblin1940}
W.~Doeblin.
\newblock El\'ements d'une th\'eorie g\'en\'erale des cha\^ines simples
  constantes de {M}arkov.
\newblock {\em Annales Scientifiques de l'Ecole Normale Sup\'erieure},
  57(III):61--111, 1940.

\bibitem{Douc}
R.~Douc, E.~Moulines, P.~Priouret, and P.~Soulier.
\newblock {\em Markov chains}.
\newblock Springer Nature, Switzerland, 2018.

\bibitem{FortMoulines}
G.~Fort and E.~Moulines.
\newblock Polynomial ergodicity of markov transition kernels.
\newblock {\em Stoch. Proc. Appl.}, 103:57--99, 2003.

\bibitem{haggstrom}
O.~H\"aggstr\"om.
\newblock On the central limit theorem for geometrically ergodic {M}arkov
  chains.
\newblock {\em Prob. Th. Rel. Fields}, 132(1):74--82, 2005.

\bibitem{hobertregen}
J.~P. Hobert, G.~L. Jones, B.~Presnell, and J.~S. Rosenthal.
\newblock On the applicability of regenerative simulation in {M}arkov chain
  {M}onte {C}arlo.
\newblock {\em Biometrika}, 89:731--743, 2002.

\bibitem{ibragimov}
I.~A. Ibragimov and Y.~V. Linnik.
\newblock {\em Independent and stationary sequences of random variables}.
\newblock Wolters-Noordhoff, 1971.

\bibitem{JainJamison}
N.~Jain and B.~Jamison.
\newblock Contributions to doeblin's theory of {M}arkov processes.
\newblock {\em Z. Wahrsch. Verw. Geb.}, 8:19--40, 1967.

\bibitem{JarnerPol}
S.~F. Jarner and G.~O. Roberts.
\newblock Polynomial convergence rates of {M}arkov chains.
\newblock {\em Ann. Appl. Prob.}, 12:224--247, 2002.

\bibitem{jones2004markov}
G.~L. Jones.
\newblock On the {M}arkov chain central limit theorem.
\newblock {\em Probability surveys}, 1:299--320, 2004.

\bibitem{JonesHobert}
G.~L. Jones and J.~P. Hobert.
\newblock Honest exploration of intractable probability distributions via
  {Markov chain Monte Carlo}.
\newblock {\em Stat. Sci.}, 16:312--334, 2001.

\bibitem{KM1}
I.~Kontoyiannis and S.~P. Meyn.
\newblock Spectral theory and limit theorems for geometrically ergodic {M}arkov
  processes.
\newblock {\em Ann. Appl. Prob.}, 13(1):304--362, 2003.

\bibitem{KM2}
I.~Kontoyiannis and S.~P. Meyn.
\newblock Geometric ergodicity and the spectral gap of non-reversible {M}arkov
  chains.
\newblock {\em Prob. Th. Rel. Fields}, 154(1):327--339, 2009.

\bibitem{Liu1996}
J.~S. Liu.
\newblock Metropolized independent sampling with comparisons to rejection
  sampling and importance sampling.
\newblock {\em Statistics and Computing}, 6(2):113--119, 1996.

\bibitem{mengersen}
K.~L. Mengersen and R.~L. Tweedie.
\newblock Rates of convergence of the {H}astings and {M}etropolis algorithms.
\newblock {\em Annals of Statistics}, 24:101--121, 1996.

\bibitem{MeynTweedie}
S.~P. Meyn and R.~L. Tweedie.
\newblock {\em Markov chains and stochastic stability}.
\newblock Springer-Verlag, London, 1993.
\newblock Available at: probability.ca/MT.

\bibitem{nummelinTweedie:1978}
E.~Nummelin and R.~L. Tweedie.
\newblock Geometric ergodicity and {R}-positivity for general {M}arkov chains.
\newblock {\em The Annals of Probability}, 6(3):404--420, 1978.

\bibitem{Orey}
S.~Orey.
\newblock {\em Lecture notes on limit theorems for {M}arkov chain transition
  probabilities}.
\newblock Van Nostrand Reinhold, 1971.

\bibitem{negclt}
G.~O. Roberts.
\newblock A note on acceptance rate criteria for {CLT}s for
  {M}etropolis-{H}astings algorithms.
\newblock {\em J. Appl. Prob.}, 36:1210--1217, 1999.

\bibitem{hybrid}
G.~O. Roberts and J.~S. Rosenthal.
\newblock Geometric ergodicity and hybrid {M}arkov chains.
\newblock {\em Electronic Communications in Probability}, 2:13--25, 1997.

\bibitem{probsurv}
G.~O. Roberts and J.~S. Rosenthal.
\newblock General state space {M}arkov chains and {MCMC} algorithms.
\newblock {\em Probability surveys}, 1:20--71, 2004.

\bibitem{VarBounding}
G.~O. Roberts and J.~S. Rosenthal.
\newblock Variance bounding {M}arkov chains.
\newblock {\em Annals of Applied Probability}, 18(3):1201--1214, 2008.

\bibitem{RTmet}
G.~O. Roberts and R.~L. Tweedie.
\newblock Geometric convergence and central limit theorems for multidimensional
  {H}astings and {M}etropolis algorithms.
\newblock {\em Biometrika}, 83:95--110, 1996.

\bibitem{robertsTweedie:2001}
G.~O. Roberts and R.~L. Tweedie.
\newblock Geometric {L2 and L1} convergence are equivalent for reversible
  {M}arkov chains.
\newblock {\em Journal of Applied Probability}, 38:37--41, 2001.

\bibitem{Rosenthal2002}
J.~S. Rosenthal.
\newblock Quantitative convergence rates of {M}arkov chains: A simple account.
\newblock {\em Electronic Communications in Probability}, 7:123--128, 2002.

\bibitem{rudin:1991functional}
W.~Rudin.
\newblock {\em Functional Analysis}.
\newblock International series in pure and applied mathematics. McGraw-Hill,
  1991.

\bibitem{tierney1994markov}
L.~Tierney.
\newblock Markov chains for exploring posterior distributions.
\newblock {\em Annals of Statistics}, 22:1701--1728, 1994.

\bibitem{vere-jones1962}
D.~Vere-Jones.
\newblock Geometric ergodicity in denumerable {M}arkov chains.
\newblock {\em The Quarterly Journal of Mathematics}, 13(1):7--28, 1962.

\end{thebibliography}

\end{document}